\newtheorem{theorem}{Theorem}[section]
\newtheorem{lemma}[theorem]{Lemma}
\newtheorem{proposition}{Proposition}
\theoremstyle{definition}
  \newtheorem{definition}[theorem]{Definition}
  \newtheorem{remark}{Remark}
\title[Wall Effect on the Motion of a Rigid Body]
  {Wall Effect on the Motion of a Rigid Body Immersed in a Free Molecular Flow}
\author[Kai Koike]{}
\subjclass{Primary: 35Q83, 35R37; Secondary: 70F40.}
\keywords{Free Vlasov Equation, Moving Boundary Problem, Drag/Friction Force, Asymptotic Behaviour, Wall Effect}
\email{koike@math.keio.ac.jp}
\thanks{The author would like to express his sincere gratitude to Tatsuo Iguchi for reading the manuscript and giving him valuable comments.}
\begin{document}
\maketitle

\centerline{\scshape Kai Koike}
\medskip
{\footnotesize
  \centerline{Department of Mathematics, Faculty of Science and Technology, Keio University}
  \centerline{3--14--1 Hiyoshi, Kohoku-ku, Yokohama, 223--8522, Japan}
}
\bigskip

\centerline{(Communicated by Mario Pulvirenti)}

\begin{abstract}
  Motion of a rigid body immersed in a semi-infinite expanse of gas in a $d$-dimensional region bounded by an infinite plane wall is studied for free molecular flow on the basis of the free Vlasov equation under the specular boundary condition. We show that the velocity $V(t)$ of the body approaches its terminal velocity $V_{\infty}$ according to a power law $V_{\infty}-V(t)\approx Ct^{-(d-1)}$ by carefully analyzing the pre-collisions due to the presence of the wall. The exponent $d-1$ is smaller than $d+2$ for the case without the wall found in the classical work by Caprino, Marchioro and Pulvirenti~[Comm. Math. Phys., \textbf{264} (2006), pp. 167--189] and thus slower convergence rate results from the presence of the wall.
\end{abstract}

\section{Introduction}
Consider a cylinder immersed in a semi-infinite expanse of gas in a quiescent equilibrium in a $d$-dimensional region bounded by an infinite plane wall (Figure~\ref{fig:cylinder}). We study the motion of this cylinder for free molecular flow on the basis of the free Vlasov equation~\eqref{Vlasov} under the specular boundary condition~\eqref{specular}. The cylinder is accelerated instantaneously to an initial velocity $V_0$ in the direction away from the plane wall and parallel to the axis of the cylinder and thereafter applied a constant force $E$ in this direction. As the cylinder moves through the gas, a drag force $D_V(t)$ is exerted to the body from the surrounding gas and the velocity of the cylinder $V(t)$ is determined dynamically by the balance of the applied force $E$ and the drag force $D_V(t)$: $dV(t)/dt=E-D_V(t)$, where we assumed that the mass of the cylinder is unity.

In the long time limit, the velocity $V(t)$ of the cylinder is expected to approach a terminal velocity $V_{\infty}$ and our primary interest in this paper is the rate of approach to the terminal velocity. The pioneering work by Caprino, Marchioro and Pulvirenti~\cite{CMP1} showed that in the absence of the wall, the asymptotic convergence rate obeys a power law $t^{-(d+2)}$, that is, $V_{\infty}-V(t)\approx Ct^{-(d+2)}$. This is due to the dependence of the drag force on the past history of the motion. Note that if the drag force is solely determined from the instantaneous velocity of the cylinder, that is, $D_V(t)=D(V(t))$ for some smooth and increasing function $D(U)$, the approach to the terminal velocity is exponential in time. Since molecules colliding with the cylinder might had collisions previously, which we call \textit{pre-collisions}, the drag force exerted to the cylinder depends on the whole history of the motion: $\{ V(t)\mid t\geq 0 \}$.

We show in this paper that the presence of the wall modifies the asymptotic convergence rate to $t^{-(d-1)}$, which is slower than the case without the wall. This is caused by the pre-collisions of the molecules with large horizontal velocities $\xi_1$, satisfying in particular $\xi_1>\sup \{ V(t)\mid t\geq 0 \}$. Note that there are no such pre-collisions in the absence of the wall. Our proof is based on the framework developed in~\cite{CMP1} and the novelty of this work lies in establishing appropriate estimates for the pre-collisions due to the presence of the wall. Our results show that a distant obstacle, the plane wall in our case, may change the asymptotic behaviour substantially. This might be particularly important in the design and interpretation of results of laboratory experiments because vacuum chamber has a wall.

We consider in this paper both cases of $0<V_0<V_{\infty}$ and $0<V_{\infty}<V_0$. In the absence of the wall, the case of $0<V_0<V_{\infty}$ is treated in~\cite{CMP1} and the case of $0<V_{\infty}<V_0$ is treated in~\cite{CCM1}. Note that the more difficult case of $0=V_{\infty}<V_0$ is also treated in~\cite{CCM1} but we still are not able to treat this case in the presence of the wall. As in the case without the wall, there is a change in sign of $V_{\infty}-V(t)$ in the case of $0<V_{\infty}<V_0$. We note that in the absence of the wall, an almost necessary and sufficient condition for velocity reversal is given in~\cite{CS2}. Body shapes other than a cylinder can be considered as well. In the absence of the wall, general convex bodies are treated in~\cite{Cavallaro1} and the asymptotic convergence rate is shown to be $t^{-(d+2)}$. Note that due to the presence of the wall, even if the body is convex the asymptotic convergence rate may be different from $t^{-(d+2)}$. In the presence of the wall, general convex bodies or a U-shaped body treated in~\cite{RS1} may be included but we shall restrict ourselves to the case of a cylinder for simplicity. We note that for the model considered in this paper, the drag force $D_V(t)$ is time dependent even when the velocity $V(t)$ is constant. This feature is seen also in the case of a V-shaped body analyzed in~\cite{FSS1}. For they only treated the time dependency of the drag force when the velocity $V(t)$ is constant, our result is the first case to obtain a precise asymptotic convergence rate for a model with such feature. Other boundary conditions, for example the Maxwell boundary condition, are treated in~\cite{ACMP1,CS1} for the case without the wall. The cases with the wall and boundary conditions other than the specular one are left for future study. For other related mathematical and numerical studies, we refer the book by Butt\`{a}, Cavallaro and Marchioro~\cite{BCM1} and the references therein. We briefly mention here that the approach to the terminal velocity is algebraic also for a (non-stationary) Stokes fluid~\cite{BJJ1,CM1,CMT1}. This is also caused by the dependence of the drag force on the past history of the motion.

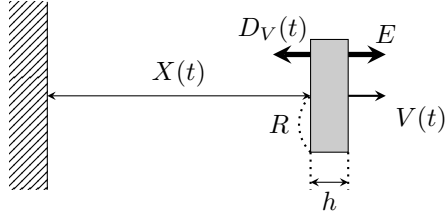
\begin{figure}[htbp]
  \centering
  \begin{tikzpicture}
    \fill[pattern=north east lines] (0,0) -- (0.5,0) -- (0.5,2.5) -- (0,2.5) -- (0,0);
    \draw (0.5,0) -- (0.5,2.5);
    \draw[<->,>=stealth] (0.5,1.25) -- node[above] {$X(t)$} (4,1.25);
    \filldraw[fill=black!20,draw=black] (4,0.5) rectangle (4.5,2);
    \draw[->,>=stealth,black,thick] (4.5,1.25) -- (5,1.25) node[below right] {$V(t)$};
    \draw[->,>=stealth,black,line width=2] (4.5,1.8) -- (5,1.8) node[above] {$E$};
    \draw[->,>=stealth,black,line width=2] (4,1.8) -- (3.5,1.8) node[above] {$D_V(t)$};
    \draw[thick,dotted] (4,0.5) -- (4,0);
    \draw[thick,dotted] (4.5,0.5) -- (4.5,0);
    \draw[<->,>=stealth] (4,0.1) -- node[below] {$h$} (4.5,0.1);
    \draw[thick,dotted] (4,0.5) to [out=135,in=225] node[left] {$R$} (4,1.25);
  \end{tikzpicture}
  \caption{A two dimensional picture of a cylinder immersed in a semi-infinite expanse of gas in a region bounded by an infinite plane wall is shown. A constant force $E$ is applied in the direction of the axis of the cylinder and a drag force $D_V(t)$ is exerted to the cylinder from the surrounding gas. The balance between the applied force $E$ and the drag force $D_V(t)$ determines the dynamics of the cylinder: $dV(t)/dt=E-D_V(t)$.}
  \label{fig:cylinder}
\end{figure}

\section{Mathematical Formulation of the Problem}\label{sec2}
Consider a cylinder of radius $R$ and height $h$ in a $d$-dimensional Euclidean space $\mathbb{R}^d$. We assume that $d\geq 2$. Writing the coordinate of the space as $x=(x_1,x_{\perp})\in \mathbb{R} \times \mathbb{R}^{d-1}$, this cylinder is described by the set
\begin{equation}
  C_V(t)=\{ x=(x_1,x_{\perp}) \mid X(t)<x_1<X(t)+h,\, |x_{\perp}|<R \}.
\end{equation}
$X(t)$ is the $x_1$-coordinate of the left end of the cylinder and is allowed to vary in time (Figure~\ref{fig:cylinder}). This cylinder is placed in a $d$-dimensional region bounded by an infinite plane wall which is the half plane $\mathbb{R}_{+}^{d}=\{ x=(x_1,x_{\perp}) \mid x_1>0 \}$. The lateral boundary of $C_V(t)$ is denoted $C_{V}^{S}(t)$, that is,
\begin{equation}
  C_{V}^{S}(t)=\{ x=(x_1,x_{\perp}) \mid X(t)\leq x_1 \leq X(t)+h,\, |x_{\perp}|=R \}.
\end{equation}
The right and the left boundary of $C_V(t)$ are written $C_{V}^{\pm}(t)$ respectively:
\begin{align}
  C_{V}^{+}(t) & =\{ x=(x_1,x_{\perp}) \mid x_1=X(t)+h,\, |x_{\perp}|<R \}, \\
  C_{V}^{-}(t) & =\{ x=(x_1,x_{\perp}) \mid x_1=X(t),\, |x_{\perp}|<R \}.
\end{align}
Note that when we use $V$ as a subscript, this means that the subscripted quantity may depend on the whole history of the motion, that is, $\{ V(t)\mid t\geq 0\}$.

The state of the gas is described by the distribution function $f(x,\xi,t)$, where $x$ is the position variable and moves through $\overline{\mathbb{R}_{+}^{d}}\backslash C_V(t)$ and $\xi=(\xi_1,\xi_{\perp}) \in \mathbb{R} \times \mathbb{R}^{d-1}$ is the velocity variable of the constituent molecules. We assume that the region $\overline{\mathbb{R}_{+}^{d}}\backslash C_V(t)$ is initially occupied by a semi-infinite expanse of ideal monatomic gas in a quiescent equilibrium of pressure $p_0$ and temperature $T_0$, that is, the distribution function $f$ is initially the Maxwell--Boltzmann distribution with pressure $p_0$ and temperature $T_0$.
\begin{equation}\label{Maxwell}
  f(x,\xi,0)=f_0(\xi)=\frac{2\beta_{0}^{(d+2)/2}}{\pi^{d/2}}p_0\exp\left( -\beta_0 |\xi|^2 \right),
\end{equation}
where $\beta_0=(2R_g T_0)^{-1}$ and $R_g$ is the gas constant. We consider the gas to be in the free molecular regime. This means that the time evolution of the distribution function $f$ obeys the free Vlasov equation
\begin{equation}\label{Vlasov}
  \partial_t f+\xi \cdot \nabla_{x}f=0 \quad \text{in $\Bigl( \mathbb{R}_{+}^{d}\backslash \overline{C_V(t)} \Bigr) \times \mathbb{R}^{d}\times (0,\infty)$}.
\end{equation}
For the boundary condition, we impose the specular boundary condition.
\begin{equation}\label{specular}
  \begin{dcases}
    f(x,\xi,t)=f(x,\xi-2[(\xi-\mathbf{V})\cdot \mathbf{n}]\mathbf{n},t) & \text{for $x\in \partial C_V(t)$ and $(\xi-\mathbf{V})\cdot \mathbf{n}>0$}, \\
    f(x,\xi,t)=f(x,(-\xi_1,\xi_{\perp}),t) & \text{for $x\in \partial \mathbb{R}_{+}^{d}$ and $\xi_1>0$}.
  \end{dcases}
\end{equation}
Here $\mathbf{V}=(V(t),\bm{0})\in \mathbb{R} \times \mathbb{R}^{d-1}$ and $\mathbf{n}$ is the outward unit normal vector to $\partial C_V(t)$.

A constant force $E>0$ is applied to the cylinder in the direction away from the plane wall and parallel to the axis of the cylinder. A drag force $D_V(t)$ is exerted to the body from the surrounding gas and is given by
\begin{align}\label{drag}
  \begin{aligned}
  D_V(t)
  & =2\left[ \int_{C_{V}^{+}(t)}\, dS\int_{\xi_1<V(t)}(\xi_1-V(t))^2 f(x,\xi,t)\, d\xi \right. \\
  & \phantom{2\int_{C_{V}^{+}(t)}\, dS}\left. -\int_{C_{V}^{-}(t)}\, dS\int_{\xi_1>V(t)}(\xi_1-V(t))^2 f(x,\xi,t)\, d\xi \right].
  \end{aligned}
\end{align}
See~\cite{CMP1} for the derivation of the formula. Note that there is no contribution to the drag force from the lateral boundary $C_{V}^{S}(t)$ of the cylinder. Therefore the dynamics of the cylinder is described by the equations
\begin{equation}\label{EOM}
  \begin{dcases}
    dX(t)/dt=V(t), \\
    dV(t)/dt=E-D_V(t).
  \end{dcases}
\end{equation}
Here we assumed that the mass of the cylinder is unity. The initial conditions are
\begin{equation}\label{ICforEOM}
  X(0)=L, \quad V(0)=V_0,
\end{equation}
where $L$ and $V_0$ are positive constants.

For a fixed velocity $\xi$, the Vlasov equation~\eqref{Vlasov} is a constant coefficient transport equation and is therefore solvable by the method of characteristics. We write the set of $(x,\xi)$ representing incoming molecules into $C_{V}^{+}(t)$ and $C_{V}^{-}(t)$ by
\begin{align}
  I_{V}^{+}(t) & =\{ (x,\xi) \mid x\in C_{V}^{+}(t),\, \xi_1<V(t) \}, \\
  I_{V}^{-}(t) & =\{ (x,\xi) \mid x\in C_{V}^{-}(t),\, \xi_1>V(t) \}
\end{align}
respectively. For $(x,\xi)\in I_{V}^{+}(t)\cup I_{V}^{-}(t)$, we denote the backward characteristics starting from $(x,\xi)$ by $(x(s;x,\xi,t),\xi(s;x,\xi,t))$, where $0\leq s\leq t$. We may hereafter write $x(s)=x(s;x,\xi,t)$ and $\xi(s)=\xi(s;x,\xi,t)$ for notational convenience. More precisely, the definition of the characteristics $(x(s),\xi(s))$ are given as follows. Define the first pre-collision time $\tau_1=\tau_1(x,\xi,t)$ by
\begin{align}\label{tau1w}
  \begin{aligned}
  \tau_1
  & =\sup \Bigl\{ s\in [0,t) \mathrel{\Big|} \text{$x-(t-s)\xi \in C_{V}^{+}(s)\cup C_{V}^{-}(s)$} \\
  & \phantom{=\sup  s\in [0,t) \mathrel{\Big|} \text{$x-(t-s)\xi$}} \text{or $x_1-(t-s)\xi_1=0$} \Bigr\} \vee 0,
  \end{aligned}
\end{align}
where $x\vee y=\max(x,y)$. Here we use the convention that the supremum of the empty set equals $-\infty$. For $\tau_1 \leq s\leq t$, the characteristics $(x(s),\xi(s))$ are defined by
\begin{align}
  \begin{aligned}
    & x(s)=x-(t-s)\xi, \\
    & \xi(s)=\xi.
  \end{aligned}
\end{align}
If $\tau_1=0$, we have thus defined the characteristics $(x(s),\xi(s))$ for $0\leq s\leq t$. If $\tau_1>0$, we define the reflected velocity $\xi'(\tau_1)$ as follows.
\begin{equation}
  \begin{dcases}
    \xi_{1}'(\tau_1)=2V(\tau_1)-\xi_1 & \text{if $x(\tau_1)\in C_{V}^{+}(\tau_1)\cup C_{V}^{-}(\tau_1)$}, \\
    \xi_{1}'(\tau_1)=-\xi_1 & \text{if $x_1(\tau_1)=0$}
  \end{dcases}
\end{equation}
and $\xi_{\perp}'(\tau_1)=\xi_{\perp}$. Next define the second pre-collision time $\tau_2$ by
\begin{align}
  \tau_2
  & =\sup \Bigl\{ s\in [0,\tau_1) \mathrel{\Big|} \text{$x(\tau_1)-(\tau_1-s)\xi'(\tau_1)\in C_{V}^{+}(s)\cup C_{V}^{-}(s)$} \\
  & \phantom{s\in [0,\tau_1) \mathrel{\Big|} \text{$x(\tau_1)-\xi'(\tau_1)(\tau_1-s)$}} \text{or $x_1(\tau_1)-(\tau_1-s)\xi_1'(\tau_1)=0$} \Bigr\} \vee 0.
\end{align}
For $\tau_2\leq s<\tau_1$, the characteristics $(x(s),\xi(s))$ are defined by
\begin{align}
  \begin{aligned}
    & x(s)=x(\tau_1)-(\tau_1-s)\xi'(\tau_1), \\
    & \xi(s)=\xi'(\tau_1).
  \end{aligned}
\end{align}
If $\tau_2=0$, we have thus defined the characteristics $(x(s),\xi(s))$ for $0\leq s\leq t$. In general, if $\tau_{n-1}>0$ for $n\geq 2$, we define the reflected velocity $\xi'(\tau_{n-1})$ by
\begin{equation}
  \begin{dcases}
    \xi_{1}'(\tau_{n-1})=2V(\tau_{n-1})-\xi_1(\tau_{n-1}) & \text{if $x(\tau_{n-1})\in C_{V}^{+}(\tau_{n-1})\cup C_{V}^{-}(\tau_{n-1})$}, \\
    \xi_{1}'(\tau_{n-1})=-\xi_1(\tau_{n-1}) & \text{if $x_1(\tau_{n-1})=0$}
  \end{dcases}
\end{equation}
and $\xi_{\perp}'(\tau_{n-1})=\xi_{\perp}$. The $n$-th pre-collision time $\tau_n$ is defined by
\begin{align}\label{taunw}
  \begin{aligned}
  \tau_n
  & = \sup \Bigl\{ s\in [0,\tau_{n-1}) \mathrel{\Big|} \text{$x(\tau_{n-1})-(\tau_{n-1}-s)\xi'(\tau_{n-1})\in C_{V}^{+}(s)\cup C_{V}^{-}(s)$} \\
  & \phantom{s\in [0,\tau_{n-1}) \mathrel{\Big|} \text{$x(\tau_{n-1},t)$}} \text{or $x_1(\tau_{n-1})-(\tau_{n-1}-s)\xi_1'(\tau_{n-1})=0$} \Bigr\} \vee 0.
  \end{aligned}
\end{align}
For $\tau_n \leq s<\tau_{n-1}$, the characteristics $(x(s),\xi(s))$ are defined by
\begin{align}
  \begin{aligned}
    & x(s)=x(\tau_{n-1})-(\tau_{n-1}-s)\xi'(\tau_{n-1}), \\
    & \xi(s)=\xi'(\tau_{n-1}).
  \end{aligned}
\end{align}

Similar to~\cite[Proposition~A.1]{CMP1}, we can prove that for $(x,\xi)\in I_{V}^{+}(t)\cup I_{V}^{-}(t)$, we have $\xi_1(\tau_n)\neq V(\tau_n)$ and $\tau_{m}=0$ for some $m\geq 1$ except on a set of zero $(2d-1)$-dimensional Hausdorff measure in $I_{V}^{+}(t)\cup I_{V}^{-}(t)$. Note that we also have $\xi_1(\tau_n)\geq V(\tau_n)$ (resp. $\xi_1(\tau_n)\leq V(\tau_n)$) for $(x,\xi)\in I_{V}^{+}(t)$ (resp. $(x,\xi)\in I_{V}^{-}(t)$), which is intuitively clear and is proved in the proof of Proposition~\ref{r+nonnegative}. Therefore we see that $\tau_n<\tau_{n-1}$ as long as $\tau_{n-1}>0$. Moreover, the case of $|x_{\perp}(\tau_n)|=R$ is also measure theoretically negligible. From these, we see that the characteristics $(x(s),\xi(s))$ are well-defined up to $s=0$ except on a set of measure zero.

Note that the specular boundary condition~\eqref{specular} implies that
\begin{equation}
  f(x,\xi,t)=f(x(s),\xi(s),s)
\end{equation}
for $0\leq s\leq t$ and if we denote $\xi_0=(\xi_{01},\xi_{0\perp})=\xi(0)$, we have
\begin{equation}
  f(x,\xi,t)=f_0(\xi_0).
\end{equation}
Therefore we can rewrite equation~\eqref{drag} as
\begin{equation}
  D_V(t)=2\left[ \int_{I_{V}^{+}(t)}(\xi_1-V(t))^2 f_0(\xi_0)\, d\xi dS-\int_{I_{V}^{-}(t)}(\xi_1-V(t))^2 f_0(\xi_0)\, d\xi dS \right].
\end{equation}
We split $D_V(t)$ into three parts: $D_V(t)=D_0(V(t))+r_{V}^{+}(t)+r_{V}^{-}(t)$. First, $D_0(V(t))$ is defined as follows.
\begin{align}\label{D0}
  \begin{aligned}
    & D_0(V(t)) \\
    & \quad =2\left[ \int_{I_{V}^{+}(t)}(\xi_1-V(t))^2 f_0(\xi)\, d\xi dS-\int_{I_{V}^{-}(t)}(\xi_1-V(t))^2 f_0(\xi)\, d\xi dS \right] \\
    & \quad =C_0\left[ \int_{-\infty}^{V(t)}(\xi_1-V(t))^2 e^{-\beta_0 \xi_{1}^{2}}\, d\xi_1-\int_{V(t)}^{\infty}(\xi_1-V(t))^2 e^{-\beta_0 \xi_{1}^{2}}\, d\xi_1 \right],
  \end{aligned}
\end{align}
where $C_0=4\pi^{(d-2)/2}\beta_{0}^{3/2}R^{d-1}p_0 /\Gamma((d+1)/2)$. Secondly, $r_{V}^{\pm}(t)$ is defined as follows.
\begin{align}
  r_{V}^{+}(t) & =\bar{C}\int_{I_{V}^{+}(t)}(\xi_1-V(t))^2 \left( e^{-\beta_0 |\xi_{0}|^{2}}-e^{-\beta_0 |\xi|^2} \right) \, d\xi dS, \label{r+} \\
  r_{V}^{-}(t) & =\bar{C}\int_{I_{V}^{-}(t)}(\xi_1-V(t))^2 \left( e^{-\beta_0 |\xi|^2}-e^{-\beta_0 |\xi_{0}|^{2}} \right) \, d\xi dS, \label{r-}
\end{align}
where $\bar{C}=4\beta_{0}^{(d+2)/2}p_0 /\pi^{d/2}$. The following lemma is proved in~\cite{CMP1}.

\begin{lemma}\label{D0properties}
  Consider a function
  \begin{align}
    D_0(U)
    & =C_0\left[ \int_{-\infty}^{U}(\xi_1-U)^2 e^{-\beta_0 \xi_{1}^{2}}\, d\xi_1-\int_{U}^{\infty}(\xi_1-U)^2 e^{-\beta_0 \xi_{1}^{2}}\, d\xi_1 \right]
  \end{align}
  defined for $U\geq 0$. Then $D_0(U)$ is positive and convex for $U>0$. Moreover, there is a constant $C_1>0$ such that $D_{0}'(U)\geq C_1$ for $U\geq 0$. In particular, $D_0(U)$ grows at least linearly in $U$.
\end{lemma}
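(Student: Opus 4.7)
The plan is to differentiate $D_0$ directly, exploit the symmetry of the Gaussian weight, and read off all four conclusions.

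First I would compute $D_0'(U)$. Differentiating inside each integral (the boundary terms at $\xi_1=U$ vanish because $(\xi_1-U)^2$ is zero there), and merging the two pieces using the fact that $\xi_1-U<0$ on $(-\infty,U)$ and $\xi_1-U>0$ on $(U,\infty)$, one obtains the compact form
\begin{equation}
  D_0'(U)=2C_0\int_{-\infty}^{\infty}|\xi_1-U|\, e^{-\beta_0 \xi_1^2}\, d\xi_1.
\end{equation}
To extract the uniform lower bound $C_1$, I would simply restrict the integral to $\xi_1\leq 0$, where $|\xi_1-U|=U-\xi_1\geq |\xi_1|$ for every $U\geq 0$, giving
\begin{equation}
  D_0'(U)\geq 2C_0\int_{-\infty}^{0}|\xi_1|\, e^{-\beta_0 \xi_1^2}\, d\xi_1=:C_1>0.
\end{equation}
Since $D_0(0)=0$ by symmetry of the Gaussian, integrating this bound from $0$ to $U$ immediately yields $D_0(U)\geq C_1 U$, which gives both positivity on $U>0$ and at-least-linear growth.

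Next I would establish convexity by differentiating once more. Starting from the expression $D_0'(U)=2C_0[\int_{-\infty}^{U}(U-\xi_1)e^{-\beta_0\xi_1^2}d\xi_1+\int_{U}^{\infty}(\xi_1-U)e^{-\beta_0\xi_1^2}d\xi_1]$, the boundary contributions again vanish and one gets
\begin{equation}
  D_0''(U)=2C_0\left[\int_{-\infty}^{U}e^{-\beta_0 \xi_1^2}\, d\xi_1-\int_{U}^{\infty}e^{-\beta_0 \xi_1^2}\, d\xi_1\right].
\end{equation}
For $U>0$, the change of variable $\xi_1\mapsto -\xi_1$ shows that $\int_{-\infty}^{U}e^{-\beta_0\xi_1^2}d\xi_1>\int_{U}^{\infty}e^{-\beta_0\xi_1^2}d\xi_1$, so $D_0''(U)>0$ and hence $D_0$ is strictly convex on $(0,\infty)$.

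I do not expect any real obstacle here: the only care is in the differentiation under the integral sign (the boundary contributions at $\xi_1=U$ are zero, so the moving endpoint costs nothing) and in correctly combining the two integrals using signs, which produces the $|\xi_1-U|$ representation that makes all three properties transparent.
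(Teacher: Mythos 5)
Your computation is correct and complete: the formula $D_0'(U)=2C_0\int_{\mathbb{R}}|\xi_1-U|e^{-\beta_0\xi_1^2}\,d\xi_1$ is right, the restriction to $\xi_1\le 0$ gives the uniform lower bound $C_1=C_0/\beta_0$, and $D_0''(U)=2C_0\int_{-U}^{U}e^{-\beta_0\xi_1^2}\,d\xi_1>0$ gives convexity, while $D_0(0)=0$ plus $D_0'\ge C_1$ gives positivity and linear growth. Note that the paper itself does not prove this lemma but only cites Caprino--Marchioro--Pulvirenti; your argument is the standard direct one and fully establishes the statement.
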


By Lemma~\ref{D0properties}, the equation
\begin{equation}\label{terminal}
  D_0(V_{\infty})=E
\end{equation}
is uniquely solvable for $V_{\infty}$ given $E>0$. Therefore we can and will consider $V_{\infty}$ to be the parameter of the problem instead of $E$. We prove in the sequel that $V_{\infty}$ is the terminal velocity, that is, $\lim_{t\to \infty}V(t)=V_{\infty}$.

\section{Main Theorems}
From now on, we say that $(X(t),V(t))$ is a solution to the problem if $V(t)$ is Lipschitz continuous and $(X(t),V(t))$ satisfies equations~\eqref{EOM} and~\eqref{ICforEOM} with $D_V(t)=D_0(V(t))+r_{V}^{+}(t)+r_{V}^{-}(t)$, where $D_0(V(t))$ and $r_{V}^{\pm}(t)$ are defined by equations~\eqref{D0},~\eqref{r+} and~\eqref{r-}.

\begin{theorem}\label{theorem1}
  Let $0<V_0<V_{\infty}$ and $\gamma=V_{\infty}-V_0$. There exist positive constants $\gamma_0=\gamma_0(\beta_0,p_0,R,V_{\infty},d)$ and $L_0=L_0(\beta_0,p_0,R,V_{\infty},d)$ such that for all $\gamma \in (0,\gamma_0]$ and $L\in [L_0,\infty)$, there exists a solution $(X(t),V(t))$ to the problem. Moreover, for all $\gamma \in (0,\gamma_0]$ and $L\in [L_0,\infty)$, any solution $(X(t),V(t))$ to the problem satisfies the following inequalities: $V(t)\geq V_{\infty}/2$,
  \begin{equation}
    V_{\infty}-V(t)\leq \gamma e^{-C_{+}t}+\gamma^3 \frac{A_+}{(1+t)^{d+2}}+L^{-(d-1)}\frac{B_+}{(1+t/L)^{d-1}} \label{estimate1}
  \end{equation}
  and
  \begin{equation}
    V_{\infty}-V(t)\geq \gamma e^{-C_{-}t} \label{estimate2}
  \end{equation}
  for $t\geq 0$, where $A_+=A_+(\beta_0,p_0,R,V_{\infty},d)$ and $B_+=B_+(\beta_0,p_0,R,V_{\infty},d)$ are positive constants. Here $C_{+}=D_{0}'(V_{\infty}/2)$ and $C_{-}=D_{0}'(V_{\infty})$.
\end{theorem}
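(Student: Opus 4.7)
The plan is to construct the solution by a Banach fixed-point argument on a suitable closed subset of velocity histories, with all the novelty concentrated in sharp quantitative estimates on $r_V^{\pm}(t)$. I would work with the complete metric space
\[
\mathcal{S} = \{V \in C([0,\infty); [V_0, V_\infty]) : V(0) = V_0,\ V \text{ nondecreasing and satisfying \eqref{estimate1} and \eqref{estimate2}}\}
\]
equipped with the sup-norm, and iterate $\Phi(V)(t) = V_0 + \int_0^t [D_0(V_\infty) - D_0(V(s)) - r_V^+(s) - r_V^-(s)]\, ds$, using $E = D_0(V_\infty)$ from \eqref{terminal}. The nondecreasing constraint is crucial because it ensures that any backward characteristic with $\xi_1 > V(t)$ satisfies $\xi_1 > V(s)$ for all $s \le t$, which greatly simplifies the pre-collision analysis. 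Once the $r_V^\pm$ estimates are in hand, invariance and contraction reduce to Gronwall arguments based on Lemma~\ref{D0properties}: the coercivity $D_0'(U) \ge C_1$ combined with $V(t) \geq V_\infty/2$ produces the exponential term in \eqref{estimate1}, while the lower bound \eqref{estimate2} follows from Gronwall applied to $d(V_\infty - V(t))/dt \geq -D_0'(V_\infty)(V_\infty - V(t))$ after invoking the nonnegativity of $r_V^\pm$ (Proposition~\ref{r+nonnegative}) and convexity of $D_0$.

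The crux is the bound on $r_V^\pm(t)$. I would decompose the phase space $I_V^{\pm}(t)$ by the pre-collision history of the backward characteristic. Trajectories with no cylinder pre-collision contribute nothing, because either $\xi_0 = \xi$ or else $\xi_0$ differs from $\xi$ only by iterated sign flips of $\xi_1$ coming from wall bounces, preserving $|\xi_0| = |\xi|$ and hence $f_0(\xi_0) = f_0(\xi)$ by the evenness of the Maxwellian in $\xi_1$. Characteristics with cylinder pre-collisions but no intervening wall bounce yield the classical $\gamma^3 (1+t)^{-(d+2)}$ contribution by essentially the argument of~\cite{CMP1}. The new \emph{wall-mediated} histories, in which a cylinder pre-collision is separated from the current cylinder contact by at least one wall bounce, are what produce the $L^{-(d-1)}(1+t/L)^{-(d-1)}$ term.

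The cleanest wall-mediated case is $r_V^-(t)$: for a backward trajectory from $C_V^-(t)$ with $\xi_1 > V(t)$, the monotonicity of $V$ rules out any direct cylinder pre-collision (such a collision would force $\xi_1$ to equal a time-average of $V$, which is bounded by $V(t)$), so if $\xi_1 > X(t)/t$ the first pre-collision is the wall at $\tau_1 = t - X(t)/\xi_1$. For $\xi_1 \gtrsim V_\infty + 2L/t$, a further pre-collision at $C_V^-(\tau_2)$ exists, yielding $\xi_0 = (2V(\tau_2) + \xi_1, \xi_\perp)$ with $|\xi_0|^2 - |\xi|^2 = 4V(\tau_2)(V(\tau_2) + \xi_1) > 0$, so the integrand is strictly positive. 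A direct kinematic computation (approximating $V \approx V_\infty$, valid when $\gamma$ is small) gives $t - \tau_2 \approx 2(L + V_\infty t)/(\xi_1 + V_\infty)$, and the transverse constraint $|x_\perp - (t-\tau_2)\xi_\perp| < R$ forces $(x_\perp, \xi_\perp)$ into a set of measure at most of order $R^{2(d-1)}/(t-\tau_2)^{d-1}$. Integrating against the Gaussian in $\xi_1$ then produces precisely the claimed $L^{-(d-1)}(1+t/L)^{-(d-1)}$ bound. The analogous wall-mediated histories for $r_V^+(t)$ and the multi-bounce cascades are handled by the same mechanism, with the series in the number of bounces summable thanks to the Gaussian weight $e^{-\beta_0 \xi_1^2}$.

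The main obstacle I anticipate is making the above kinematic and transverse-volume estimates uniform over $V \in \mathcal{S}$ (rather than only in the $V \equiv V_\infty$ toy model sketched above), controlling the tower of possibly infinitely many wall-cylinder reflections, and verifying that the wall-mediated contributions to $r_V^+(t)$ really share the same $L^{-(d-1)}(1+t/L)^{-(d-1)}$ scaling. Once these estimates are in place, the fixed-point assembly and the derivation of both bounds via Gronwall are routine.
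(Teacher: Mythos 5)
Your overall architecture --- a fixed point on a class of velocity histories satisfying \eqref{estimate1}--\eqref{estimate2}, with the work concentrated in bounding $r_V^{\pm}$, and the wall-mediated trajectories into $C_V^-(t)$ producing the $L^{-(d-1)}(1+t/L)^{-(d-1)}$ term via the kinematics $t-\tau_2\approx 2X(t)/(\xi_1+V_\infty)$ and the transverse constraint $|x_\perp-(t-\tau_2)\xi_\perp|<R$ --- matches the paper. But there are two genuine gaps. First, the contraction claim is unsupported and almost certainly unavailable: $r_V^{\pm}$ depends on $V$ through the pre-collision times, which are defined implicitly by relations such as $\xi_1=\langle V\rangle_{\tau_1,t}$, and a small perturbation of $V$ shifts these times non-Lipschitzly (the defining function can have vanishing derivative where $\xi_1\approx V(\tau_1)$) and moves sets of $(x,\xi)$ of positive measure across the collision/no-collision threshold. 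One can prove continuity of $W\mapsto r_W^{\pm}$ --- the paper does so in Proposition~\ref{mapcont} by tracking convergence of the pre-collision times and applying dominated convergence --- but not a contraction estimate. This is precisely why the paper uses Schauder's fixed point theorem on a compact convex subset of $C_b([0,\infty))$ (compactness via Arzel\`a--Ascoli plus uniform decay), and why Remark~\ref{remark1} records that uniqueness, which your contraction would deliver for free, is open. So ``invariance and contraction reduce to Gronwall'' is not a routine step but the missing heart of the existence proof.

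Second, your class $\mathcal{S}$ of nondecreasing functions with values in $[V_0,V_\infty]$ is not invariant under the iteration, and the ``any solution'' clause of the theorem is not restricted to such functions. The paper proves $V(t)>V_0$ and monotonicity only on a bounded interval $[0,t_0]$ and only under the \emph{additional} hypothesis $L^{-(d-1)}\ll\gamma$ (Remark~\ref{remark2} and the appendix); for admissible $(\gamma,L)$ with $L^{-(d-1)}$ comparable to $\gamma$, the forcing $r_V^-(t)\geq C(L+t)^{-(d-1)}$ for $t>L/2$ (Proposition~\ref{r-lowerbound}) competes with the decaying relaxation term, and there is no reason for the image $V_W$ to remain above $V_0$ or to be monotone. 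Since your pre-collision analysis for $I_V^-(t)$ leans on monotonicity to exclude direct cylinder pre-collisions, the argument degrades exactly where it is needed. The paper avoids monotonicity altogether by splitting $r_W^-$ into the range $W(t)<\xi_1<V_\infty$ (bounded crudely by $(V_\infty-W(t))^3$) and $\xi_1>V_\infty$ (where $\xi_1>V_\infty>W(s)$ excludes direct pre-collisions for every $W$ in the class), and it obtains the a priori bounds for \emph{arbitrary} solutions by a separate continuation argument on the first time \eqref{estimate1} fails --- a step your proposal does not address.
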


\begin{theorem}\label{theorem2}
  Let $0<V_0<V_{\infty}$ and $\gamma=V_{\infty}-V_0$. There exist positive constants $\gamma_0=\gamma_0(\beta_0,p_0,R,V_{\infty},d)$ and $L_0=L_0(\beta_0,p_0,R,V_{\infty},d)$ such that for all $\gamma \in (0,\gamma_0]$ and $L\in [L_0,\infty)$, any solution $(X(t),V(t))$ to the problem satisfies the inequality
  \begin{equation}
    V_{\infty}-V(t)\geq \gamma e^{-C_{-}t}+\bm{1}_{\{ t>L \}}\frac{B_-}{t^{d-1}} \label{estimate3}
  \end{equation}
  for $t\geq 0$, where $B_-=B_-(\beta_0,p_0,R,V_{\infty},d)$ is a positive constant.
\end{theorem}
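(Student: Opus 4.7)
The plan is to convert the equation of motion into a linear differential inequality for $y(t) := V_\infty - V(t)$ and then to produce the algebraic lower bound from an explicit tube of wall-reflected pre-colliding trajectories contributing to $r_V^-(t)$. Using $E = D_0(V_\infty)$, $D_V(t) = D_0(V(t)) + r_V^+(t) + r_V^-(t)$, and the convex lower bound $D_0(V) - D_0(V_\infty) \geq D_0'(V_\infty)(V - V_\infty) = -C_- y$ from Lemma~\ref{D0properties}, I obtain
\begin{equation}
  y'(t) \geq -C_- y(t) + r_V^+(t) + r_V^-(t),
\end{equation}
hence the Duhamel-type inequality
\begin{equation}
  y(t) \geq \gamma e^{-C_- t} + \int_0^t e^{-C_-(t-s)} \bigl( r_V^+(s) + r_V^-(s) \bigr) \, ds.
\end{equation}
The first term produces the exponential part of~\eqref{estimate3}. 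For the algebraic part I will show $r_V^-(s) \geq c\, s^{-(d-1)}$ for $s > L$, since then restricting the convolution integral to $s \in [t/2, t]$ yields a contribution of order $t^{-(d-1)}$; the contribution of $r_V^+(s)$ is negligible because for molecules in $I_V^+(s)$ ($\xi_1 < V$) the backward trajectory does not reach the wall once $\gamma$ and $1/L$ are small, so the wall-induced mechanism is absent and one is reduced to the body-only pre-collisions of~\cite{CMP1}, which are of order $s^{-(d+2)}$.

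The heart of the argument is the construction of a tube $A \subset I_V^-(t)$ of trajectories whose backward history consists of a single wall reflection followed by a single reflection off the right face $C_V^+$ of the cylinder. With $\delta$ a fixed constant of order $1$, set
\begin{equation}
  A := \bigl\{ (x,\xi) \in I_V^-(t) \,:\, |x_\perp| \leq R/2,\; \xi_1 \in [V_\infty + \delta,\, V_\infty + 2\delta],\; |\xi_\perp| \leq c_0/t \bigr\},
\end{equation}
where $c_0$ is chosen so that the total lateral displacement from $t$ back to the body pre-collision remains inside the cross-section $\{|x_\perp| < R\}$. Using the a priori bounds $V(s) \in [V_\infty/2,\, V_\infty + O(\gamma)]$ and $X(s) \approx L + V_\infty s$ supplied by Theorem~\ref{theorem1}, a direct geometric computation shows that for every $(x,\xi) \in A$ and every $t > L$ the backward trajectory hits the wall at $\tau_1 = t - X(t)/\xi_1 > 0$, reflects to $(-\xi_1, \xi_\perp)$, and then hits $C_V^+$ at some $\tau_2 \in (0, \tau_1)$. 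The specularly reflected initial horizontal velocity is $\xi_{01} = 2V(\tau_2) + \xi_1$, so
\begin{equation}
  |\xi_0|^2 - |\xi|^2 = 4V(\tau_2)\xi_1 + 4V(\tau_2)^2 \geq c(V_\infty) > 0,
\end{equation}
which makes $e^{-\beta_0|\xi|^2} - e^{-\beta_0|\xi_0|^2}$ bounded below by a positive constant on $A$. Combining this with $(\xi_1 - V(t))^2 \geq \delta^2/4$ and the volume estimate $\operatorname{meas}(A) \gtrsim R^{d-1}\, \delta\, (c_0/t)^{d-1}$ yields $r_V^-(t) \gtrsim t^{-(d-1)}$ for $t > L$, and inserting this into the Duhamel inequality yields~\eqref{estimate3} after a suitably small choice of $B_-$.

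The main obstacle will be verifying that further pre-collisions further back in time than $\tau_2$ on the tube $A$ do not spoil the positive sign of $|\xi_0|^2 - |\xi|^2$. Each additional body pre-collision contributes another positive term of the form $4V(\tau_k)\xi_1(\tau_k)$, while wall pre-collisions leave $|\xi_0|^2$ unchanged; thus provided the trajectory stays in the regime where $\xi_1(\tau_k) > 0$ at each step, the integrand of $r_V^-(t)$ can only increase. Once $\gamma$ and $1/L$ are sufficiently small, the number of such additional pre-collisions on $A$ is uniformly bounded, so a strictly positive lower bound on the integrand survives. Carrying out this geometric book-keeping, analogous to but more involved than~\cite[Proposition~A.1]{CMP1}, is the technical heart of the proof.
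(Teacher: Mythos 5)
Your proposal follows essentially the same route as the paper: a Duhamel-type lower bound with rate $C_-=D_0'(V_\infty)$, non-negativity of $r_V^{\pm}$ to discard the $r_V^+$ contribution, and a lower bound $r_V^-(t)\gtrsim t^{-(d-1)}$ obtained from an explicit tube of phase space (small $|x_\perp|$, $\xi_1$ in a fixed window above $V_\infty$, $|\xi_\perp|\lesssim 1/t$) on which the backward characteristic reflects off the wall and then off the body, so that $|\xi_0|^2-|\xi|^2=4V(\tau_2)\xi_1+4V(\tau_2)^2$ is bounded below --- this is exactly the paper's Lemma~\ref{tau2wsufficient} and Propositions~\ref{r-nonnegative}, \ref{r-lowerbound}. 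Two small corrections: after the wall reflection the body pre-collision is on the \emph{left} face $C_V^-(\tau_2)$ (not $C_V^+$), and you should establish the $r_V^-$ lower bound for $t>L/2$ (as the paper does) rather than $t>L$, so that the convolution integral does not degenerate as $t\downarrow L$.
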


\begin{remark}\label{remark1}
  Uniqueness of the solution is not known as in the previous works (e.g. \cite{CMP1}). However, Theorems~\ref{theorem1} and \ref{theorem2} at least guarantee that the asymptotic behaviour is unique; the asymptotic convergence rate is $t^{-(d-1)}$.
\end{remark}

\begin{remark}
  The asymptotic convergence rate is $t^{-(d-1)}$ in our case --- the case with the plane wall; it is $t^{-(d+2)}$ without the plane wall~\cite{CMP1}. So the presence of the wall delays the convergence to the terminal velocity $V_{\infty}$. This is because the presence of the plane wall strengthens the drag force $D_V(t)$.
  
  An explanation for this is as follows. Consider a molecule with velocity $\xi$ impinging on the left side of the cylinder at time $t$: $(x,\xi)\in I_{V}^{-}(t)$. Suppose for simplicity that $\xi_1>V_{\infty}$. If there is no plane wall, then no pre-collisions occur because $V(t)<V_{\infty}$ from inequality~\eqref{estimate2}. So we have $\xi_{0}=\xi$ in this case. On the other hand, if the plane wall is present, then the molecule might have several pre-collisions. Let us assume for simplicity that there are only two pre-collisions: $\tau_2>0$ and $\tau_3=0$. Since $V(t)<V_{\infty}$, the first pre-collision is with the plane wall (at $s=\tau_1$); and the second pre-collision is with the cylinder (at $s=\tau_2$). See Figure~\ref{fig:wallprecoll}. In this case, we have $\xi_{01}=2V(\tau_2)-\xi_{1}'(\tau_1)=2V(\tau_2)+\xi_1$ and $|\xi_{0\perp}|=|\xi_{\perp}|$. So $|\xi_0|$ is larger in the presence of the plane wall. Remember that $f(x,\xi,t)=f_0(\xi_0)$ and $f_0(\xi_0)$ is decreasing in $|\xi_0|$. Therefore, $f(x,\xi,t)$ is smaller if the plane wall is present: The momentum transfer from the surrounding gas to the left side of the cylinder is smaller. This means that the drag force $D_V(t)$ is strengthened by the presence of the plane wall.
\end{remark}
  
\begin{figure}[h]
  \centering
  \includegraphics[scale=0.5]{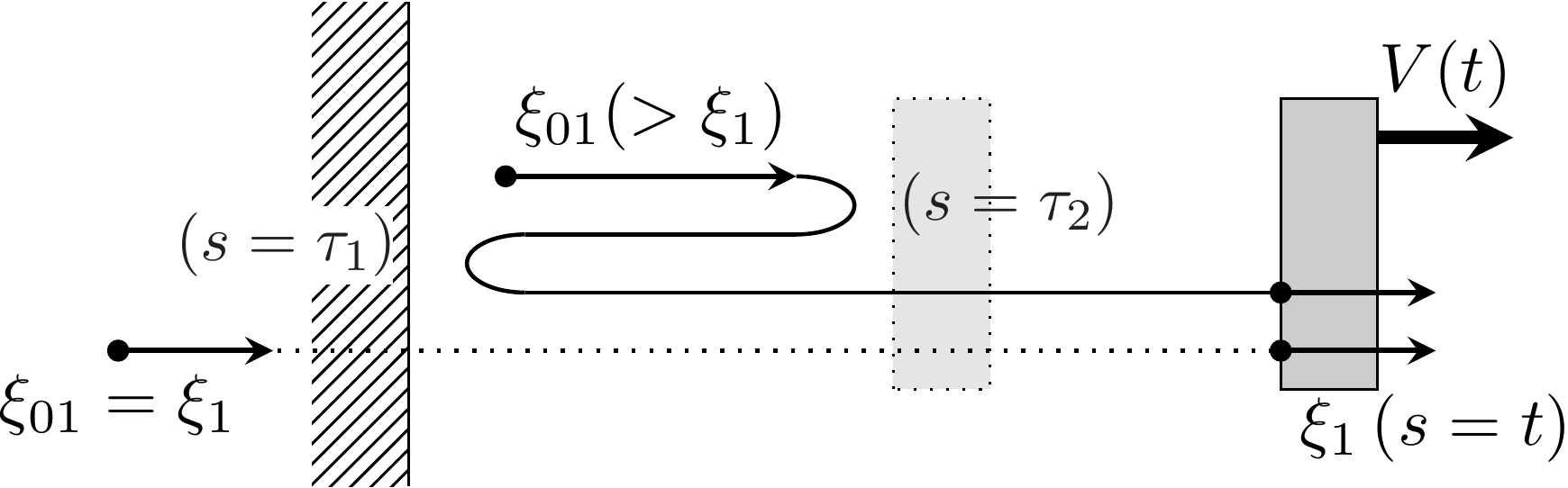}
  \caption{A molecule with velocity $\xi$ is impinging on the left side of the cylinder. The solid line schematically represents the motion of the molecule in the case with the plane wall; the dashed line is that without the plane wall.}
  \label{fig:wallprecoll}
\end{figure}

\begin{remark}\label{remark2}
  Let $(X(t),V(t))$ be any solution to the problem. If we further assume that $L^{-(d-1)}\ll \gamma$,\footnote{The precise meaning of the notation $L^{-(d-1)}\ll \gamma$ is explained in the appendix (Section~\ref{appendix}).} then $V(t)>V_0$ for $t>0$ --- and not just $V(t)\geq V_{\infty}/2$. Under the same assumption, $V(t)$ is also increasing on a time interval $[0,t_0]$, where
  \begin{equation}
    t_0=\frac{1}{2C_-}\log \frac{C_+ \gamma}{\hat{C}\left[ \gamma^3+L^{-(d-1)} \right] }
  \end{equation}
  and $\hat{C}=\hat{C}(\beta_0,p_0,R,V_{\infty},d)$ is a positive constant. Note that $t_0$ grows infinitely as $\gamma \to +0$ since $L^{-(d-1)}\ll \gamma$. These are proved in the appendix (Sections~\ref{appendix1} and \ref{appendix2}).
\end{remark}

\begin{remark}\label{remark3}
  Under an additional assumption that $L^{-(d-1)}\ll \gamma$, we can refine inequality~\eqref{estimate3} to
  \begin{equation}
    V_{\infty}-V(t)\geq \gamma e^{-C_{-}t}+\bm{1}_{\{ t_1>t>\bar{t} \}}\gamma^4 \frac{A_-}{t^{d+2}}+\bm{1}_{\{ t>L \}}\frac{B_-}{t^{d-1}} \label{estimate3_refined},
  \end{equation}
  where $A_-=A_-(\beta_0,p_0,R,V_{\infty},d)$, $\bar{t}=\bar{t}(\beta_0,p_0,R,V_{\infty},d)$ are positive constants and
  \begin{equation}
    t_1=\bar{c}\gamma L^{d-1},
  \end{equation}
  where $\bar{c}=\bar{c}(\beta_0,p_0,R,V_{\infty},d)$ is a positive constant. This is also proved in the appendix (Section~\ref{appendix3}). If we fix $\gamma$ and let $L\to \infty$, inequalities~\eqref{estimate1} and \eqref{estimate3_refined} become
  \begin{equation}
    \gamma e^{-C_{-}t}+\bm{1}_{\{ t>\bar{t} \}}\gamma^4 \frac{A_-}{t^{d+2}}\leq V_{\infty}-V(t)\leq \gamma e^{-C_{+}t}+\gamma^3 \frac{A_+}{(1+t)^{d+2}}.
  \end{equation}
  These are exactly the same estimates obtained in the case without the plane wall~\cite{CMP1}.
\end{remark}

\begin{remark}
  $V(t)$ is necessarily continuously differentiable if $(X(t),V(t))$ is a solution to the problem. See Remark~\ref{remark:cont} after Proposition~\ref{mapcont}.
\end{remark}

We can also prove the following theorems.

\begin{theorem}\label{theorem3}
  Let $0<V_{\infty}<V_0$ and $\gamma=V_0-V_{\infty}$. There exist positive constants $\gamma_0=\gamma_0(\beta_0,p_0,R,V_{\infty},d)$ and $L_0=L_0(\beta_0,p_0,R,V_{\infty},d)$ such that for all $\gamma \in (0,\gamma_0]$ and $L\in [L_0,\infty)$, there exists a solution $(X(t),V(t))$ to the problem. Moreover, any solution $(X(t),V(t))$ to the problem satisfies the following inequalities: $V(t)\geq V_{\infty}/2$,
  \begin{equation}
    V(t)-V_{\infty}\geq \gamma e^{-C_{1}t}-\gamma^3 \frac{A_1}{(1+t)^{d+2}}-L^{-(d-1)}\frac{B_1}{(1+t/L)^{d-1}} \label{estimate4}
  \end{equation}
  and
  \begin{equation}
    V(t)-V_{\infty}\leq \gamma e^{-C_{2}t} \label{estimate5}
  \end{equation}
  for $t\geq 0$, where $A_1=A_1(\beta_0,p_0,R,V_{\infty},d)$ and $B_1=B_1(\beta_0,p_0,R,V_{\infty},d)$ are positive constants. Here $C_{1}=D_{0}'(3V_{\infty}/2)$ and $C_{2}=D_{0}'(V_{\infty}/2)$.
\end{theorem}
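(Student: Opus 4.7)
The plan is to mirror the fixed-point argument used to establish Theorems~\ref{theorem1} and~\ref{theorem2}, adapting the sign conventions to the deceleration regime $0<V_\infty<V_0$. I would first introduce a complete metric space $\mathcal{M}$ of Lipschitz trajectories $V:[0,\infty) \to \mathbb{R}$ satisfying $V(0)=V_0$, $V_\infty/2 \leq V(t) \leq V_0$, together with the target bounds~\eqref{estimate4} and~\eqref{estimate5}. Equipping $\mathcal{M}$ with the topology of uniform convergence on compact time intervals, the solution map
\begin{equation*}
  T(V)(t) := V_0 + \int_0^t \bigl( E - D_V(s) \bigr)\, ds
\end{equation*}
is continuous on $\mathcal{M}$, and any fixed point is a solution. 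Existence then follows from a Schauder-type argument once $T(\mathcal{M})\subseteq \mathcal{M}$ is established, exactly as in the proofs of Theorems~\ref{theorem1} and~\ref{theorem2}.

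Assuming $\gamma$ is small, we have $V(t)\in [V_\infty,3V_\infty/2]$, and Lemma~\ref{D0properties} together with the mean value theorem gives
\begin{equation*}
  C_2(V-V_\infty) \leq D_0(V)-E \leq C_1(V-V_\infty),
\end{equation*}
since $E=D_0(V_\infty)$, $C_1=D_0'(3V_\infty/2)$, and $C_2=D_0'(V_\infty/2)$. Substituting into $\frac{d}{dt}T(V) = -\bigl(D_0(V)-E\bigr) - \bigl(r_V^+ + r_V^-\bigr)$ and applying Gronwall's inequality yields the exponential parts of both~\eqref{estimate4} and~\eqref{estimate5}, provided $r_V^+ + r_V^-$ can be controlled with the correct sign.

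The bulk of the proof therefore consists of estimating $r_V^{\pm}(t)$ defined in~\eqref{r+}--\eqref{r-}. These encode contributions from phase-space points whose backward trajectory undergoes at least one pre-collision, and I would split the integration domain according to whether the first pre-collision is with the cylinder ($x(\tau_1)\in C_V^+(\tau_1)\cup C_V^-(\tau_1)$) or with the wall ($x_1(\tau_1)=0$). The cylinder-only piece can be handled exactly as in~\cite{CMP1} and yields the $\gamma^3(1+t)^{-(d+2)}$ factor in~\eqref{estimate4}, exploiting that successive cylinder reflections keep $|\xi_1-V(\tau_n)|\lesssim \gamma$. The wall contribution is nonzero only for $(x,\xi)\in I_V^-(t)$ with $\xi_1>0$ large enough to reach $x_1=0$ backward in time and with $|x_\perp|$ outside the cylinder's lateral shadow; a direct computation in the two-collision case gives $|\xi_0|^2-|\xi|^2=4V(\tau_2)^2+4V(\tau_2)\xi_1>0$, so this piece of $r_V^-$ is nonnegative modulo controllable errors, which is precisely the sign needed to close~\eqref{estimate5}. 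Quantitatively, a change of variables $(x_\perp,\xi_\perp)\mapsto(x_\perp-(t-\tau_1)\xi_\perp,\xi_\perp)$ combined with the Gaussian decay of $f_0$ produces the $L^{-(d-1)}(1+t/L)^{-(d-1)}$ factor appearing in~\eqref{estimate4}.

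The hardest step will be the wall pre-collision estimate in the deceleration regime. Since $V(t)\geq V_\infty$ implies $X(t)\geq L+V_\infty t$, backward rays reaching the wall require $\xi_1\gtrsim X(t)/t$, and one must verify that these high-$\xi_1$ contributions do not swamp the exponential main term; moreover, one must argue that higher-order pre-collisions (three or more) give strictly smaller contributions than the two-collision case treated above. A bootstrap exploiting that $|\xi_1-V(\tau_k)|$ can only grow through successive reflections while the Gaussian weight $e^{-\beta_0|\xi_0|^2}$ shrinks accordingly should close this. Once $T(\mathcal{M})\subseteq \mathcal{M}$ is established, the existence and the stated bounds follow from the same fixed-point machinery as in Theorems~\ref{theorem1} and~\ref{theorem2}.
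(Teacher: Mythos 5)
Your overall plan --- rerun the fixed-point machinery of Theorems~\ref{theorem1} and~\ref{theorem2} with the signs adapted to the deceleration regime --- is exactly what the paper intends (it states that Theorems~\ref{theorem3} and~\ref{theorem4} ``can be proved similarly'' and gives no separate proof), and your identification of the wall-mediated two-collision contribution to $r_V^-$, with $\xi_{01}=2V(\tau_2)+\xi_1$ and the resulting $L^{-(d-1)}(1+t/L)^{-(d-1)}$ factor, is the right key estimate. However, there is one concrete error in your second paragraph: the assertion that $V(t)\in[V_{\infty},3V_{\infty}/2]$ is false for large $t$. By Theorem~\ref{theorem4} (inequality~\eqref{estimate6}) the velocity reverses: $V(t)<V_{\infty}$ for $t$ large, and indeed the lower bound~\eqref{estimate4} you are trying to prove is negative for large $t$. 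On the set where $V<V_{\infty}$ the two-sided mean-value bound you wrote, $C_2(V-V_{\infty})\leq D_0(V)-E\leq C_1(V-V_{\infty})$, reverses direction (both sides are negative and $C_1\geq C_2$), so the Gronwall step as stated does not close. The correct device is the one used in~\eqref{map}--\eqref{KW}: write $D_0(V)-E=K_V(t)\,[V(t)-V_{\infty}]$ with $K_V(t)=\bigl(D_0(V(t))-D_0(V_{\infty})\bigr)/\bigl(V(t)-V_{\infty}\bigr)$, which by convexity of $D_0$ (Lemma~\ref{D0properties}) satisfies $C_2\leq K_V(t)\leq C_1$ for all $V(t)\in[V_{\infty}/2,3V_{\infty}/2]$ regardless of the sign of $V(t)-V_{\infty}$; Duhamel then gives both~\eqref{estimate4} and~\eqref{estimate5} from $r_V^{\pm}\geq 0$ and the upper bounds on $r_V^{\pm}$.

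A related technical point: your iteration map $T(V)(t)=V_0+\int_0^t(E-D_V(s))\,ds$ is the raw Picard map, and it is not clear that it preserves the decay class, since $\gamma-\int_0^t K_V(s)(V(s)-V_{\infty})\,ds$ need not stay below $\gamma e^{-C_2 t}$ when $C_1>C_2$. The paper's map~\eqref{map} puts the damping term $-K_W(t)[V_{\infty}-V_W(t)]$ on the \emph{output} $V_W$ while $K_W$ and $r_W^{\pm}$ are computed from the \emph{input} $W$; this is what makes the Duhamel representation yield the exponential factors explicitly and lets the invariance $V_W\in\Omega$ close. Finally, a small bookkeeping remark: in the deceleration regime the roles of the two faces swap relative to Theorem~\ref{theorem1} --- the direct cylinder pre-collisions producing the $\gamma^3(1+t)^{-(d+2)}$ term now occur for $(x,\xi)\in I_V^{-}(t)$ with $V(t)<\xi_1<\langle V\rangle_{0,t}$ (the left face), while $I_V^{+}(t)$ contributes essentially nothing of this type; your phrase ``cylinder-only piece'' glosses over this but the estimates are symmetric. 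With the $K_V$ linearization and the paper's map in place of your $T$, the rest of your outline goes through.
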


\begin{theorem}\label{theorem4}
  Let $0<V_{\infty}<V_0$ and $\gamma=V_0-V_{\infty}$. There exist positive constants $\gamma_0=\gamma_0(\beta_0,p_0,R,V_{\infty},d)$ and $L_0=L_0(\beta_0,p_0,R,V_{\infty},d)$ such that for all $\gamma \in (0,\gamma_0]$ and $L\in [L_0,\infty)$, any solution $(X(t),V(t))$ to the problem satisfies the inequality
  \begin{equation}
    V(t)-V_{\infty}\leq \gamma e^{-C_{2}t}-\bm{1}_{\{ t>L \}}\frac{B_2}{t^{d-1}} \label{estimate6}
  \end{equation}
  for $t\geq 0$, where $B_2=B_2(\beta_0,p_0,R,V_{\infty},d)$ is a positive constant.
\end{theorem}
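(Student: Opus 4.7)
The plan is to mirror the argument of Theorem~\ref{theorem2} with the roles of $V_0$ and $V_\infty$ exchanged. Setting $W(t) = V(t) - V_\infty$ and using $E = D_0(V_\infty)$, the equation of motion~\eqref{EOM} becomes
\begin{equation}
\frac{dW}{dt} = -\bigl[D_0(V(t)) - D_0(V_\infty)\bigr] - r_V^+(t) - r_V^-(t).
\end{equation}
By the mean value theorem, $D_0(V(t)) - D_0(V_\infty) = D_0'(\eta(t)) W(t)$ with $\eta(t)$ between $V_\infty$ and $V(t)$; Theorem~\ref{theorem3} yields $V(t) \geq V_\infty/2$ and, via \eqref{estimate5}, also $V(t) \leq V_0$, so $\eta(t) \in [V_\infty/2, V_0]$ and $D_0'(\eta(t)) \in [C_2, D_0'(V_0)]$ by Lemma~\ref{D0properties}. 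Solving the resulting linear ODE with the integrating factor $\exp(\int_0^t D_0'(\eta))$ and inserting these uniform bounds yields
\begin{equation}
W(t) \leq \gamma e^{-C_2 t} - \int_0^t e^{-D_0'(V_0)(t-s)}\bigl(r_V^+(s)+r_V^-(s)\bigr)\,ds.
\end{equation}
The proof thereby reduces to the pointwise nonnegativity $r_V^+(s) + r_V^-(s) \geq 0$ and a lower bound of order $t^{-(d-1)}$ on the integral for $t > L$.

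For the second step, I would establish, in direct parallel with the proof of Theorem~\ref{theorem2}, a wall-induced lower bound $r_V^-(s) \geq c\, s^{-(d-1)}$ valid on an interval of the form $[c'L, t]$. The decisive phase-space region is the one depicted in Figure~\ref{fig:wallprecoll}: molecules with $\xi_1 > V_\infty$ impinging on the left face $C_V^-(s)$ whose backward characteristic reflects once off the plane wall at time $\tau_1$ and once off the cylinder's left face at time $\tau_2 < \tau_1$, with $\tau_3 = 0$. The specular rules give $\xi_{01} = 2V(\tau_2) + \xi_1$, whence
\begin{equation}
|\xi_0|^2 - |\xi|^2 = 4V(\tau_2)\bigl(V(\tau_2) + \xi_1\bigr) \geq V_\infty^2,
\end{equation}
so the integrand in \eqref{r-} is uniformly bounded below by a positive constant on this set. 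A change of variables from $(x,\xi)$ to the wall impact point and the transverse velocity, together with $X(s) \geq L + V_\infty s/2$, shows that this set has $(2d-1)$-dimensional measure of order $s^{-(d-1)}$ for $s$ large compared to $L$. Integrating the resulting pointwise lower bound against the exponential kernel, which is bounded below by a positive constant on $s \in [t-O(1), t]$, produces the required $B_2/t^{d-1}$ bound.

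The main obstacle is the geometric/phase-space measure computation just sketched, together with the companion estimate bounding by $O(s^{-d})$ the contributions from configurations with three or more pre-collisions and from the differently configured $r_V^+$ term, whose wall pre-collisions may produce contributions of the opposite sign. These form the technical heart of the argument, but they transfer from the corresponding estimates for Theorem~\ref{theorem2} essentially verbatim because all bounds depend only on the a-priori envelope $V(s) \in [V_\infty/2, V_0]$ secured by Theorem~\ref{theorem3}, and not on the sign of $V(s) - V_\infty$; in particular the positivity $r_V^+(s) + r_V^-(s) \geq 0$ extends to the present regime. A minor book-keeping point is the time-dependent coefficient $D_0'(\eta(s))$ in the Gronwall formula, which is dealt with by the uniform bounds already recorded.
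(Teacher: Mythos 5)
Your proposal is correct and follows exactly the route the paper intends: the paper does not write out a proof of Theorem~\ref{theorem4} but states that it follows by the method used for Theorem~\ref{theorem2}, and your adaptation --- the Duhamel representation with $K_V\in[C_2,D_0'(V_0)]$, the non-negativity of $r_V^{\pm}$, and the wall-induced lower bound on $r_V^{-}$ coming from the double pre-collision (wall, then left face) --- is precisely that argument, all steps of which survive the sign change of $V-V_\infty$ because they only use the envelope $V\in[V_\infty/2,V_0]$. One cosmetic caveat: the integrand in \eqref{r-} is not literally bounded below by a positive constant uniformly in $\xi_1$ on your set (it carries a Gaussian factor), but integrating the Gaussian-weighted lower bound in $\xi_1$ over the range permitted by the analogue of Lemma~\ref{tau2wsufficient} still produces the required constant, exactly as in Proposition~\ref{r-lowerbound}; likewise, the monotonicity of $|\xi_1(\tau_n)|$ used in Proposition~\ref{r-nonnegative} makes your separate treatment of configurations with three or more pre-collisions unnecessary.
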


\begin{remark}
  It follows from inequality~\eqref{estimate6} that $V(t)-V_{\infty}$ changes its sign: $V(t)-V_{\infty}>0$ for $t\ll 1$ and $V(t)-V_{\infty}<0$ for $t\gg 1$.
\end{remark}

We only prove Theorems~\ref{theorem1} and~\ref{theorem2}. Theorems~\ref{theorem3} and~\ref{theorem4} can be proved similarly by the method developed in this paper.

\section{Proof of the Theorems}
\subsection{Strategy of the Proof}
We first briefly describe the strategy of our proof. First, we define a function space $\Omega(\gamma,L,A_*,B_*,V_{\infty})$ as follows. In the following, $A_*$ and $B_*$ are positive constants.

\begin{definition}
  A function $W\colon [0,\infty)\to [V_{\infty}/2,V_{\infty})$ belongs to $\Omega(\gamma,L,A_*,B_*,V_{\infty})$ if $W(t)$ is Lipschitz continuous in $t$, $W(0)=V_0$ and satisfies for $t\geq 0$ the following inequalities.
  \begin{align}
    & V_{\infty}-W(t)\leq \gamma e^{-C_{+}t}+\gamma^3 \frac{A_*}{(1+t)^{d+2}}+L^{-(d-1)}\frac{B_*}{(1+t/L)^{d-1}}, \label{upperbound} \\
    & V_{\infty}-W(t)\geq \gamma e^{-C_{-}t}, \label{lowerbound1}
  \end{align}
  where $C_+=D_{0}'(V_{\infty}/2)$ and $C_-=D_{0}'(V_{\infty})$.
\end{definition}

Next, we define a map $W\mapsto V_W$ by the equations
\begin{equation}\label{map}
  \begin{dcases}
    dX_W(t)/dt=V_W(t), \\
    \frac{d}{dt}\left[ V_{\infty}-V_W(t) \right]=-K_W(t)\left[ V_{\infty}-V_W(t) \right] +r_{W}^{+}(t)+r_{W}^{-}(t)
  \end{dcases}
\end{equation}
with the initial conditions
\begin{equation}\label{mapIC}
  X_W(0)=L,\quad V_W(0)=V_0.
\end{equation}
Here $K_W(t)$ is defined by
\begin{equation}\label{KW}
  K_W(t)=\frac{D_0(V_{\infty})-D_0(W(t))}{V_{\infty}-W(t)}.
\end{equation}
By Lemma~\ref{D0properties}, we have
\begin{equation}\label{KWestimate}
  C_+\leq K_W(t)\leq C_{-}.
\end{equation}
We note here that $r_{W}^{\pm}(t)$ are computed via the characteristics $(x(s),\xi(s))$ determined from the dynamics of the cylinder described by $(X_W(t),W(t))$.

We prove that $V_W \in \Omega=\Omega(\gamma,L,A_+,B_+,V_{\infty})$ if $W\in \Omega$, upon taking $\gamma$ sufficiently small and $L$ sufficiently large (Section~\ref{sec:VW}). Here $A_+=A_+(\beta_0,p_0,R,V_{\infty},d)$ and $B_+=B_+(\beta_0,p_0,R,V_{\infty},d)$ are appropriately chosen positive constants. Since equation~\eqref{map} can be solved explicitly regarding $r_{W}^{\pm}(t)$ as non-homogeneous terms, what we have to do is to obtain suitable estimates for $r_{W}^{\pm}(t)$ (Sections~\ref{sec:r+} and \ref{sec:r-}). The estimates are obtained by carefully analyzing the characteristics (Sections~\ref{sec:char+} and \ref{sec:char-}). Then we obtain a fixed point $V\in \Omega$ of the map $W\mapsto V_W$ by applying Schauder's fixed point theorem (Section~\ref{sec:proof1}). A fixed point $V\in \Omega$ satisfies equations~\eqref{EOM} and~\eqref{ICforEOM}. Obtaining a lower bound for $r_{V}^{-}(t)$, we can improve the lower bound~\eqref{estimate2} and obtain the lower bound~\eqref{estimate3} to prove Theorem~\ref{theorem2} (Sections~\ref{sec:r-lowerbound} and \ref{sec:proof2}).

\subsection{Analysis of the Characteristics for $(x,\xi)\in I_{W}^{+}(t)$}\label{sec:char+}
Let us take $W$ from $\Omega(\gamma,L,A_*,B_*,V_{\infty})$.\footnote{In the following (except Section~\ref{appendix}), $\tau_n$ is defined using $W$ in place of $V$. See Section~\ref{sec2} for the definition of $\tau_n$.} To obtain an estimate for $r_{W}^{+}(t)$, we analyze the characteristics starting from $(x,\xi)\in I_{W}^{+}(t)$. We define the modified first pre-collision time $\tilde{\tau}_1$ by
\begin{equation}\label{tau1}
  \tilde{\tau}_1=\sup \Bigl\{ s\in [0,t) \mathrel{\Big|} \text{$x-(t-s)\xi \in C_{W}^{+}(s)\cup C_{W}^{-}(s)$} \Bigr\} \vee 0.
\end{equation}
Note the difference between $\tau_1$ (definition~\eqref{tau1w}) and $\tilde{\tau}_1$: $\tau_1$ takes into account the plane wall at $x_1=0$ and $\tilde{\tau}_1$ do not. If $\tilde{\tau}_1>0$ (and not just $\tau_1>0$), we have
\begin{equation}
  (t-\tilde{\tau}_1)\xi_1=\int_{\tilde{\tau}_1}^{t}W(s)\, ds.
\end{equation}
See Figure~\ref{fig:tau1}. Introducing a function
\begin{equation}
  \langle W \rangle_{s,t}=\frac{1}{t-s}\int_{s}^{t}W(\sigma)\, d\sigma \quad (0\leq s<t),
\end{equation}
we see that this is equivalent to
\begin{equation}\label{relxi1W}
  \xi_1=\langle W \rangle_{\tilde{\tau}_1,t}.
\end{equation}
Furthermore, the following inequality holds.
\begin{equation}\label{xiperp}
  |x_{\perp}-(t-\tilde{\tau}_1)\xi_{\perp}|<R.
\end{equation}
See Figure~\ref{fig:tau1} again.

\begin{figure}[htbp]
  \centering
  \begin{tikzpicture}
    \centering
    \filldraw[fill=black!20,draw=black] (0,0) rectangle (0.5,2);
    \filldraw[fill=black!20,draw=black] (4,0) rectangle (4.5,2);
    \filldraw[fill=black,draw=black] (4.5,0.4) circle (0.05) node[above right] {$x$};
    \filldraw[fill=black,draw=black] (0.5,1.6) circle (0.05) node at (0.5,1.0) {$x-(t-\tilde{\tau}_1)\xi$};
    \draw[thick,->,>=stealth] (4.5,0.4) -- (5.2,0.19) node[right] {$\xi$};
    \draw[thick,->,>=stealth] (0.5,1.6) -- (1.2,1.39);
    \draw[thick,dotted] (1.2,1.39) -- (4.5,0.4);
    \node[below] at (4.5,0) {$s=t$};
    \node[below] at (0,-0.06) {$s=\tilde{\tau}_1$};
    \draw[thick,<->,>=stealth,black] (0.5,0.2) -- node[below] {$(t-\tilde{\tau}_1)\xi_1$} (4.5,0.2);
    \filldraw[thick,->,>=stealth,line width=2,black] (4.5,1.8) -- node[above] {$W(t)$} (5.2,1.8);
    \draw[thick,<->,>=stealth,black] (0.5,1.8) -- node[above] {$\int_{\tilde{\tau}_1}^{t}W(s)\, ds$} (4.5,1.8);
  \end{tikzpicture}
  \caption{A two dimensional picture of a pre-collision at $C_{W}^{+}(\tilde{\tau}_1)$ is shown. The horizontal distance traversed by the cylinder and the characteristic curve $x(s)$ from $\tilde{\tau}_1$ to $t$ coincide.}
  \label{fig:tau1}
\end{figure}
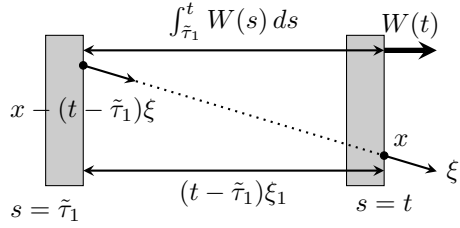

\subsection{Non-Negativity of $r_{W}^{+}(t)$ and its Upper Bound}\label{sec:r+}
We prove here the non-negativity of $r_{W}^{+}(t)$ and obtain its upper bound. First, we derive estimates for $W(t)-\xi_1$. In the following, $C$ represents a positive constant depending only on $\beta_0$, $p_0$, $R$, $V_{\infty}$ and $d$, which might change from place to place.

\begin{lemma}\label{tau1leq}
  Let $W\in \Omega(\gamma,L,A_*,B_*,V_{\infty})$ and $(x,\xi)\in I_{W}^{+}(t)$. If $0<\tilde{\tau}_1\leq t/2$, then
  \begin{equation}
    0<W(t)-\xi_1\leq \frac{C}{1+t}(\gamma+\gamma^3 A_*)+L^{-(d-1)}\frac{B_*}{1+t/L}
  \end{equation}
  for $d\geq 3$ and
  \begin{equation}
    0<W(t)-\xi_1\leq \frac{C}{1+t}(\gamma+\gamma^3 A_*)+L^{-1}B_*
  \end{equation}
  for $d=2$.
\end{lemma}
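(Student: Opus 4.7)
The plan is as follows. Since $(x,\xi)\in I_{W}^{+}(t)$ we have $\xi_1<W(t)$, which gives the lower bound $W(t)-\xi_1>0$ immediately. For the upper bound, I would start from relation~\eqref{relxi1W}, which applies because $\tilde{\tau}_1>0$; subtracting both sides from $V_{\infty}$ gives the key identity
\[
V_{\infty}-\xi_1 \;=\; \frac{1}{t-\tilde{\tau}_1}\int_{\tilde{\tau}_1}^{t}\bigl[V_{\infty}-W(\sigma)\bigr]\,d\sigma.
\]
Since $W(t)<V_{\infty}$ for $W\in\Omega$, we have $W(t)-\xi_1\leq V_{\infty}-\xi_1$, so it suffices to estimate the time average on the right.

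Next, I would insert the upper bound~\eqref{upperbound} on $V_{\infty}-W(\sigma)$ and handle the three resulting terms separately. For each term, I would combine two complementary estimates: (i) the naive bound ``average $\leq$ sup of integrand'', which is useful when $t$ is of order one; and (ii) integration over $[\tilde{\tau}_1,t]$ followed by division by $t-\tilde{\tau}_1\geq t/2$, which produces decay in $t$. Because $\int_{0}^{\infty}e^{-C_{+}\sigma}\,d\sigma$ and $\int_{0}^{\infty}(1+\sigma)^{-(d+2)}\,d\sigma$ are finite, this yields the advertised $C\gamma/(1+t)$ and $C\gamma^{3}A_{*}/(1+t)$ contributions from the exponential and polynomial terms in~\eqref{upperbound}.

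The dimension dependence enters only through the wall term $L^{-(d-1)}B_{*}(1+\sigma/L)^{-(d-1)}$. For $d\geq 3$ an explicit antiderivative gives
\[
\int_{\tilde{\tau}_1}^{t}\frac{d\sigma}{(1+\sigma/L)^{d-1}}
\;\leq\; \frac{L}{d-2}\cdot\frac{1}{(1+\tilde{\tau}_1/L)^{d-2}},
\]
and dividing by $t-\tilde{\tau}_1\geq t/2$ produces a bound of order $L^{-(d-2)}B_{*}/t$, which matches $L^{-(d-1)}B_{*}/(1+t/L)$ in the regime $t\geq L$ (since there $1/(1+t/L)\geq L/(2t)$); the complementary regime $t\leq L$ is covered by the trivial sup bound $L^{-(d-1)}B_{*}$. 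For $d=2$ the antiderivative is logarithmic, and I would exploit $\ln(1+x)\leq x$ in the form $\ln(1+t/L)/t\leq 1/L$ to produce the uniform constant bound $CL^{-1}B_{*}$ claimed in that case.

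The main obstacle I anticipate is being precise with the two-regime (small-$t$ vs.\ large-$t$) combination, and with the $d=2$ wall term where the logarithm must be tamed. Beyond that, the lemma reduces to a bookkeeping exercise using only the identity~\eqref{relxi1W} and the defining upper bound~\eqref{upperbound}; no new geometric information about the characteristics is required, since the inequality $\xi_1<W(t)$ built into $I_{W}^{+}(t)$ already takes care of the sign.
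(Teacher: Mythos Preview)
Your proposal is correct and follows essentially the same route as the paper: rewrite $W(t)-\xi_1$ via the identity~\eqref{relxi1W} as (at most) the time average of $V_{\infty}-W(\sigma)$ over $[\tilde{\tau}_1,t]$, insert the upper bound~\eqref{upperbound}, and exploit $t-\tilde{\tau}_1\geq t/2$ together with the explicit antiderivatives of the three terms. The only cosmetic difference is in the $d\geq 3$ wall term: the paper applies the mean value theorem to the difference $(1+\tilde{\tau}_1/L)^{-(d-2)}-(1+t/L)^{-(d-2)}$ to obtain $L^{-(d-1)}B_*/(1+t/L)$ in one stroke, whereas you split into the regimes $t\leq L$ and $t\geq L$; both arguments are equally short and yield the same bound up to a constant.
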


\begin{remark}
  Note that we are \textit{not} referring to $\tau_1$ here. See definitions~\eqref{tau1w} and \eqref{tau1}.
\end{remark}

\begin{proof}
  By inequality~\eqref{upperbound}, equation~\eqref{relxi1W} and $W(t)<V_{\infty}$, we have for $d\geq 3$
  \begin{align}
    W(t)-\xi_1
    & =W(t)-\langle W \rangle_{\tilde{\tau}_1,t} \\
    & =\frac{1}{t-\tilde{\tau}_1}\int_{\tilde{\tau}_1}^{t}\left[ \left( V_{\infty}-W(s) \right)-\left( V_{\infty}-W(t) \right) \right] \, ds \\
    & \leq \frac{1}{t-\tilde{\tau}_1}\int_{\tilde{\tau}_1}^{t}\left[ \gamma e^{-C_{+}s}+\gamma^3 \frac{A_*}{(1+s)^{d+2}}+L^{-(d-1)}\frac{B_*}{(1+s/L)^{d-1}} \right] \, ds \\
    & \leq \gamma e^{-C_{+}\tilde{\tau}_1}\frac{1-e^{-C_{+}(t-\tilde{\tau}_1)}}{C_{+}(t-\tilde{\tau}_1)} \\
    & \qquad \qquad +\frac{\gamma^3 A_*}{(d+1)(t-\tilde{\tau}_1)} \left[ \frac{1}{(1+\tilde{\tau}_1)^{d+1}}-\frac{1}{(1+t)^{d+1}} \right] \\
    & \qquad \qquad +\frac{L^{-(d-1)}B_*}{(d-2)(t-\tilde{\tau}_1)}\left[ \frac{L}{(1+\tilde{\tau}_1/L)^{d-2}}-\frac{L}{(1+t/L)^{d-2}} \right] \\
    & \leq C\frac{\gamma}{1+t}+\gamma^3 A_* \frac{(1+s_1)^d}{(1+\tilde{\tau}_1)^{d+1}(1+t)^{d+1}} \\
    & \qquad \qquad +L^{-(d-1)}B_* \frac{(1+s_2/L)^{d-3}}{(1+\tilde{\tau}_1)^{d-2}(1+t/L)^{d-2}}
  \end{align}
  for some $s_1$, $s_2 \in (\tilde{\tau}_1,t)$. Therefore we conclude that
  \begin{equation}
    W(t)-\xi_1\leq \frac{C}{1+t}(\gamma+\gamma^3 A_*)+L^{-(d-1)}\frac{B_*}{1+t/L}   .
  \end{equation}
  If $d=2$, we proceed as follows.
  \begin{align}
    W(t)-\xi_1
    & =W(t)-\langle W \rangle_{\tilde{\tau}_1,t} \\
    & =\frac{1}{t-\tilde{\tau}_1}\int_{\tilde{\tau}_1}^{t}\left[ \left( V_{\infty}-W(s) \right)-\left( V_{\infty}-W(t) \right) \right] \, ds \\
    & \leq \frac{1}{t-\tilde{\tau}_1}\int_{\tilde{\tau}_1}^{t}\left[ \gamma e^{-C_{+}s}+\gamma^3 \frac{A_*}{(1+s)^4}+L^{-1}\frac{B_*}{1+s/L} \right] \, ds \\
    & \leq \frac{C}{1+t}(\gamma+\gamma^3 A_*)+L^{-1}\frac{B_*}{t-\tilde{\tau}_1}\log \left( \frac{1+t/L}{1+\tilde{\tau}_1/L} \right)^{L} \\
    & \leq \frac{C}{1+t}(\gamma+\gamma^3 A_*)+L^{-1}\frac{B_*}{t-\tilde{\tau}_1}\log \left[ 1+\frac{(t-\tilde{\tau}_1)/L}{1+\tilde{\tau}_1/L} \right]^L \\
    & \leq \frac{C}{1+t}(\gamma+\gamma^3 A_*)+L^{-1}\frac{B_*}{t-\tilde{\tau}_1}\log \left[ 1+(t-\tilde{\tau}_1)/L \right]^L \\
    & \leq \frac{C}{1+t}(\gamma+\gamma^3 A_*)+L^{-1}B_*.
  \end{align}
\end{proof}

\begin{lemma}\label{tau1geq}
  Let $W\in \Omega(\gamma,L,A_*,B_*,V_{\infty})$ and $(x,\xi)\in I_{W}^{+}(t)$. If $t/2<\tilde{\tau}_1<t$, then
  \begin{equation}
    0<W(t)-\xi_1\leq C\left[ \frac{\gamma+\gamma^3 A_*}{(1+t)^{d+2}}+L^{-(d-1)}\frac{B_*}{(1+t/L)^{d-1}} \right].
  \end{equation}
\end{lemma}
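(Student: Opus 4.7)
The plan is to mimic the structure of the proof of Lemma~\ref{tau1leq} but exploit the fact that now $\tilde{\tau}_1 > t/2$, so every point of the integration interval $[\tilde{\tau}_1, t]$ is already of order $t$. Consequently, instead of integrating the decay factors and extracting sharpness via the mean value theorem, I can simply pull the maximum of each integrand outside the integral and cancel it with the $1/(t - \tilde{\tau}_1)$ prefactor.

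Concretely, I would start from the identity used in the previous lemma, namely
\begin{equation}
  W(t) - \xi_1 = W(t) - \langle W \rangle_{\tilde{\tau}_1, t} = \frac{1}{t - \tilde{\tau}_1} \int_{\tilde{\tau}_1}^{t} \bigl[ (V_{\infty} - W(s)) - (V_{\infty} - W(t)) \bigr] \, ds,
\end{equation}
drop the non-negative subtracted term $V_{\infty} - W(t)$, and apply the upper bound \eqref{upperbound} from the definition of $\Omega$ to the integrand. This produces three terms to estimate: one exponential, one $(1+s)^{-(d+2)}$, and one $(1+s/L)^{-(d-1)}$. Positivity $0 < W(t) - \xi_1$ is immediate from $(x,\xi) \in I_W^+(t)$ (so $\xi_1 < W(t)$).

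The key step is the "localization" using $\tilde{\tau}_1 > t/2$: for every $s \in [\tilde{\tau}_1, t]$ one has $s > t/2$, so
\begin{equation}
  e^{-C_+ s} \leq e^{-C_+ t/2} \leq \frac{C}{(1+t)^{d+2}}, \qquad \frac{1}{(1+s)^{d+2}} \leq \frac{C}{(1+t)^{d+2}}, \qquad \frac{1}{(1+s/L)^{d-1}} \leq \frac{C}{(1+t/L)^{d-1}}.
\end{equation}
Pulling these pointwise bounds out of the integral yields $\int_{\tilde{\tau}_1}^t (\cdots)\, ds \leq (t - \tilde{\tau}_1) \cdot (\text{bound})$, and the factor $t - \tilde{\tau}_1$ cancels the prefactor, producing precisely the claimed estimate. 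Note that the same argument works uniformly in $d \geq 2$, since the $(1+s/L)^{-(d-1)}$ bound already appears uniformly in \eqref{upperbound}.

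I do not anticipate a genuine obstacle here; the proof is essentially a one-line calculation once the identity is written down, because the short interval $[\tilde{\tau}_1, t]$ makes the "averaging" trivial. The only thing to be careful about is absorbing $e^{-C_+ t/2}$ into $C(1+t)^{-(d+2)}$ (which is standard, since $e^{-ct}$ decays faster than any polynomial, the implicit constant depending on $C_+$ and hence only on $\beta_0, p_0, R, V_{\infty}, d$), and confirming that the constants $C$ in the pointwise bounds only depend on the admissible parameters. This is the role played by the complementary lemma that handles the long-interval regime $0 < \tilde{\tau}_1 \leq t/2$; together the two lemmas will cover all non-trivial pre-collisions and feed into the upper bound for $r_W^+(t)$ in Section~\ref{sec:r+}.
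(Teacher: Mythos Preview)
Your proposal is correct and follows essentially the same approach as the paper: start from the identity $W(t)-\xi_1=(t-\tilde\tau_1)^{-1}\int_{\tilde\tau_1}^t[(V_\infty-W(s))-(V_\infty-W(t))]\,ds$, drop the subtracted term, apply \eqref{upperbound}, and use $\tilde\tau_1>t/2$ to bound each decreasing integrand pointwise (the paper evaluates at $s=\tilde\tau_1$ and then invokes $\tilde\tau_1>t/2$, while you go directly to $s>t/2$, which is the same thing). The absorption of $e^{-C_+t/2}$ into $C(1+t)^{-(d+2)}$ is exactly what the paper does in its final line.
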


\begin{proof}
  By inequality~\eqref{upperbound}, equation~\eqref{relxi1W} and $W(t)<V_{\infty}$, we have
  \begin{align}
    W(t)-\xi_1
    & =W(t)-\langle W \rangle_{\tilde{\tau}_1,t} \\
    & =\frac{1}{t-\tilde{\tau}_1}\int_{\tilde{\tau}_1}^{t}\left[ \left( V_{\infty}-W(s) \right)-\left( V_{\infty}-W(t) \right) \right] \, ds \\
    & \leq \frac{1}{t-\tilde{\tau}_1}\int_{\tilde{\tau}_1}^{t}\left[ \gamma e^{-C_{+}s}+\gamma^3 \frac{A_*}{(1+s)^{d+2}}+L^{-(d-1)}\frac{B_*}{(1+s/L)^{d-1}} \right] \, ds \\
    & \leq \gamma e^{-C_{+}\tilde{\tau}_1}+\gamma^3 \frac{A_*}{(1+\tilde{\tau}_1)^{d+2}}+L^{-(d-1)}\frac{B_*}{(1+\tilde{\tau}_1/L)^{d-1}} \\
    & \leq C\left[ \frac{\gamma+\gamma^3 A_*}{(1+t)^{d+2}}+L^{-(d-1)}\frac{B_*}{(1+t/L)^{d-1}} \right].
  \end{align}
\end{proof}

Next we prove the non-negativity of $r_{W}^{+}(t)$.

\begin{proposition}\label{r+nonnegative}
  For $W\in \Omega(\gamma,L,A_*,B_*,V_{\infty})$, we have $r_{W}^{+}(t)\geq 0$.
\end{proposition}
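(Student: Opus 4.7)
The plan is to show that along every backward characteristic starting from $(x,\xi) \in I_W^+(t)$, the initial velocity satisfies $|\xi_0|^2 \leq |\xi|^2$ (almost everywhere). Since the Maxwellian $e^{-\beta_0|\cdot|^2}$ is decreasing in $|\cdot|$ and $(\xi_1 - W(t))^2 \geq 0$, this energy inequality makes the integrand defining $r_W^+(t)$ pointwise non-negative, whence $r_W^+(t) \geq 0$.

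Both reflection rules (wall and flat-face) preserve $\xi_\perp$, so it suffices to track $\xi_1(s)^2$. A wall pre-collision sends $\xi_1 \mapsto -\xi_1$ and preserves its square. A cylinder pre-collision at $\tau_n$ sends $\xi_1(\tau_n) \mapsto \xi_1'(\tau_n) = 2W(\tau_n) - \xi_1(\tau_n)$, and a direct expansion gives
\[
  \xi_1'(\tau_n)^2 - \xi_1(\tau_n)^2 = 4W(\tau_n)\bigl[W(\tau_n) - \xi_1(\tau_n)\bigr],
\]
which is $\leq 0$ precisely when $\xi_1(\tau_n) \geq W(\tau_n)$, using $W(\tau_n) \geq V_\infty/2 > 0$. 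The proof is therefore reduced to the \emph{key geometric claim}: for every $(x,\xi) \in I_W^+(t)$ and every cylinder pre-collision time $\tau_n > 0$, the collision lies on the right face $C_W^+(\tau_n)$, which in turn forces $\xi_1(\tau_n) > W(\tau_n)$ by the outgoing specular condition.

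I prove the claim by induction on $n$. For the base case, just below $s = t$ one has
\[
  x_1(s) - (X(s) + h) = (t-s)\bigl[\langle W \rangle_{s,t} - \xi_1\bigr] > 0
\]
because $\xi_1 < W(t)$ and $\langle W \rangle_{s,t} \to W(t)$ as $s \uparrow t$; so the molecule is strictly to the right of $C_W^+$ near time $t$. Moreover $x_\perp(s) = x_\perp + (s-t)\xi_\perp$ is linear and at any cylinder pre-collision satisfies $|x_\perp(\tau_n)| < R$, so by convexity of the disk $\{|x_\perp| < R\}$ the trajectory remains in the cylinder's $\perp$-shadow on $[\tau_1, t]$. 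Since the molecule must avoid the solid interior $x_1 \in [X(s), X(s) + h]$ for $s \in (\tau_1, t)$, by continuity it stays on the right, and at $s = \tau_1$ it can only meet the cylinder on $C_W^+$, not $C_W^-$. If instead $\tau_1$ is a wall collision it contributes zero to the energy by the previous paragraph. The inductive step repeats this local reasoning starting from $x(\tau_{n-1}) \in C_W^+(\tau_{n-1})$ with reflected velocity $\xi_1'(\tau_{n-1}) \leq W(\tau_{n-1})$; an intervening wall bounce at $x_1 = 0$ simply flips the sign of $\xi_1$ and returns the molecule toward the cylinder from the right.

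Combining, $|\xi(s)|^2$ is non-increasing as $s$ decreases from $t$ to $0$, giving $|\xi_0|^2 \leq |\xi|^2$ and hence $r_W^+(t) \geq 0$. The main obstacle is the geometric bookkeeping when wall and cylinder pre-collisions alternate: one must carefully check that after each wall bounce the molecule (still carrying the same $\xi_\perp$) re-enters the cylinder's $\perp$-shadow only from the right-hand side, so that the next cylinder collision, if any, is again on $C_W^+$.
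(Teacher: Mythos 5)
Your overall strategy is the same as the paper's: reduce to $|\xi_0|\leq|\xi|$, note that wall reflections preserve $\xi_1^2$ while a reflection at $C_W^{+}(\tau_n)$ with $\xi_1(\tau_n)\geq W(\tau_n)$ decreases it (via $\xi_1'(\tau_n)^2-\xi_1(\tau_n)^2=4W(\tau_n)[W(\tau_n)-\xi_1(\tau_n)]$ and $W\geq V_\infty/2>0$), and then argue geometrically that every cylinder pre-collision of a backward characteristic from $I_W^{+}(t)$ lands on the right face with the correct sign of $\xi_1(\tau_n)-W(\tau_n)$. Your base case and the convexity-of-the-shadow observation are sound, and your derivation of $\xi_1(\tau_n)\geq W(\tau_n)$ is essentially the paper's averaging argument in differential form.

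However, your treatment of the wall bounce — which you yourself flag as the main obstacle — is wrong as stated, and this is a genuine gap. After a wall collision the molecule sits at $x_1=0<L\leq X(s)$, i.e.\ strictly to the \emph{left} of the cylinder; going further backward with the flipped velocity it approaches the cylinder from the left, not "from the right." If it could then meet the cylinder, the collision would be at $C_W^{-}$, where the backward-incoming velocity satisfies $\xi_1(\tau_{n+1})<0<W(\tau_{n+1})$ and hence $\xi_1'(\tau_{n+1})^2-\xi_1(\tau_{n+1})^2=4W(W-\xi_1)>0$: the energy \emph{increases}, which is exactly the mechanism that makes $r_W^{-}(t)\geq 0$ in Proposition~\ref{r-nonnegative}, and it would destroy your monotonicity chain. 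The correct resolution, which the paper compresses into "a simple geometrical reason," is that no such post-wall collision can occur: since all reflections preserve $\xi_\perp$, the map $s\mapsto|x_\perp-(t-s)\xi_\perp|$ is convex, so the set of times in the $\perp$-shadow is an interval $(s^{*},t]$; to reach the wall the trajectory must first slip past the cylinder, i.e.\ cross both face levels at times outside the shadow, forcing the wall collision to occur at some $\tau_n<s^{*}$; for $s<\tau_n<s^{*}$ the molecule never re-enters the shadow and moves away from the wall, so the wall bounce is necessarily the \emph{last} pre-collision and contributes $\xi_0=\xi'(\tau_n)$ with $|\xi_{01}|=|\xi_1(\tau_n)|$. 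The same interval argument is what rules out $C_W^{-}$ collisions on the earlier segments as well. Your convexity observation is precisely the needed tool, but you apply it only on $[\tau_1,t]$ and then assert a picture (re-entry into the shadow from the right) that cannot happen; as written, the induction does not close.
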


\begin{proof}
  By definition~\eqref{r+}, we see that it suffices to prove that $|\xi_{01}|\leq |\xi_1|$ for each $(x,\xi)\in I_{W}^{+}(t)$. Note that we have $\xi_{0\perp}=\xi_{\perp}$. Now we prove that $|\xi_{1}'(\tau_n)|\leq |\xi_1(\tau_n)|$ for $n\geq 1$ if $\tau_n>0$. By the definition of the reflected velocity $\xi'(\tau_n)$, we see that $|\xi_{1}'(\tau_n)|=|\xi_1(\tau_n)|$ if and only if $x_1(\tau_n)=0$; or $x(\tau_n)\in C_{W}^{+}(\tau_n)$ and $\xi_1(\tau_n)=W(\tau_n)$. Note that $\xi_1(\tau_n)=W(\tau_n)$ happens only on a set of measure zero. If $x_1(\tau_n)=0$, we have $\xi_0=\xi'(\tau_n)$ from a simple geometrical reason. Therefore, to prove that $|\xi_{01}|\leq |\xi_1|$, we can assume that $x(\tau_n)\in C_{W}^{+}(\tau_n)$. In this case, we can prove that
  \begin{equation}\label{nonnegativity_proof1}
    \xi_{1}(\tau_n)>W(\tau_n)
  \end{equation}
  for $n\geq 1$. To see this, note that similarly to equation~\eqref{relxi1W}, we have $\xi_{1}(\tau_n)=\langle W \rangle_{\tau_n,\tau_{n-1}}$. We use the notation $\tau_{0}=t$ here. Therefore
  \begin{equation}
    (\tau_{n-1}-s)\xi_1(\tau_n)-\int_{s}^{\tau_{n-1}}W(\sigma)\, d\sigma=\int_{\tau_n}^{s}W(\sigma)\, d\sigma-(s-\tau_n)\xi_1(\tau_n)
  \end{equation}
  for any $s\in (\tau_n,\tau_{n-1})$. But because of definition~\eqref{taunw}, we have for any $s\in (\tau_n,\tau_{n-1})$
  \begin{equation}
    (\tau_{n-1}-s)\xi_1(\tau_n)<\int_{s}^{\tau_{n-1}}W(\sigma)\, d\sigma.
  \end{equation}
  Hence we have
  \begin{equation}
    \int_{\tau_n}^{s}W(\sigma)\, d\sigma<(s-\tau_n)\xi_1(\tau_n)
  \end{equation}
  for any $s\in (\tau_n,\tau_{n-1})$. This implies, by taking the limit $s\to \tau_n$, that $W(\tau_n)\leq \xi_1(\tau_n)$. The equality can safely be avoided because it only happens on a measure theoretically negligible subset of $I_{W}^{+}(t)$ and inequality~\eqref{nonnegativity_proof1} is proved. Now since
  \begin{equation}
    \xi_{1}'(\tau_n)=2W(\tau_n)-\xi_1(\tau_n),
  \end{equation}
  we have
  \begin{equation}
    |\xi_{1}'(\tau_n)|^2=|\xi_1(\tau_n)|^2-4W(\tau_n)[\xi_1(\tau_n)-W(\tau_n)].
  \end{equation}
  Inequality~\eqref{nonnegativity_proof1} and $W(\tau_n)\geq V_{\infty}/2>0$ implies that $|\xi_{1}'(\tau_n)|<|\xi_1(\tau_n)|$.

  From what we proved above, we have
  \begin{equation}
    |\xi_{01}|\leq \cdots <|\xi_1(\tau_{n+1})|=|\xi_{1}'(\tau_n)|<|\xi_1(\tau_n)|<\cdots <|\xi_1|
  \end{equation}
  and this is what was to be proved.
\end{proof}

We next obtain an upper bound for $r_{W}^{+}(t)$.

\begin{proposition}\label{r+upperbound}
  Let $W\in \Omega(\gamma,L,A_*,B_*,V_{\infty})$. Then
  \begin{equation}\label{r+decay}
    0\leq r_{W}^{+}(t)\leq C\left[ \frac{(\gamma+\gamma^3 A_*)^3}{(1+t)^{d+2}}+L^{-3(d-1)}\frac{B_{*}^{3}}{(1+t/L)^{d-1}} \right].
  \end{equation}
\end{proposition}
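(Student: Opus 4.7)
The approach is to reduce the integral defining $r_W^+(t)$ to the portion where a genuine pre-collision with the cylinder occurs, and then extract the desired cubic smallness by combining the pointwise bounds of Lemmas~\ref{tau1leq} and~\ref{tau1geq} with the $(d-1)$-dimensional restriction on $\xi_\perp$ imposed by \eqref{xiperp}.

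First we show that the integrand of $r_W^+(t)$ vanishes on the subset $\{\tilde{\tau}_1 = 0\}$ of $I_W^+(t)$. For $\xi_1 < W(t)$ with no straight-line cylinder pre-collision, the backward characteristic either undergoes no reflection at all (so $\xi_0 = \xi$) or only wall reflections (which preserve $|\xi_1|$, so $|\xi_0| = |\xi|$); in either case the Gaussian difference vanishes. On the remaining set $\{\tilde{\tau}_1 > 0\}$ we use the factorization
\begin{equation}
e^{-\beta_0|\xi_0|^2} - e^{-\beta_0|\xi|^2} = e^{-\beta_0|\xi_\perp|^2}\bigl( e^{-\beta_0 \xi_{01}^2} - e^{-\beta_0 \xi_1^2} \bigr) \leq e^{-\beta_0|\xi_\perp|^2}
\end{equation}
(relying on $|\xi_{01}| \leq |\xi_1|$ from Proposition~\ref{r+nonnegative}) and split the $\tilde{\tau}_1$-range into $(t/2, t)$ and $(0, t/2]$.

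In the recent-past regime $\tilde{\tau}_1 \in (t/2, t)$, Lemma~\ref{tau1geq} provides the rapid pointwise bound $|W(t) - \xi_1| \leq M_{\mathrm{late}}(t) := C[(\gamma+\gamma^3 A_*)(1+t)^{-(d+2)} + L^{-(d-1)}B_*(1+t/L)^{-(d-1)}]$. By \eqref{relxi1W} the $\xi_1$-range has width at most $M_{\mathrm{late}}(t)$, so integrating $(\xi_1 - W(t))^2\, d\xi_1$ yields $M_{\mathrm{late}}(t)^3$, multiplied by a bounded Gaussian integral in $\xi_\perp$ and the area $|C_W^+(t)|$. This already lies strictly below the claimed bound \eqref{r+decay}. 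In the distant-past regime $\tilde{\tau}_1 \in (0, t/2]$, Lemma~\ref{tau1leq} gives only the weaker bound $W(t) - \xi_1 \leq M_{\mathrm{early}}(t) := C[(\gamma+\gamma^3 A_*)(1+t)^{-1} + L^{-(d-1)}B_*(1+t/L)^{-1}]$ for $d \geq 3$ (with a minor modification for $d=2$). The missing decay is supplied by \eqref{xiperp}: since $t - \tilde{\tau}_1 \geq t/2$, the admissible $\xi_\perp$ is confined to a $(d-1)$-ball of radius $\leq 2R/t$, yielding a volume factor $\lesssim (1+t)^{-(d-1)}$. Combining $M_{\mathrm{early}}(t)^2$, the $\xi_1$-range width $\leq M_{\mathrm{early}}(t)$, this volume factor, and the bounded $x_\perp$-integration, this regime contributes $\lesssim M_{\mathrm{early}}(t)^3 (1+t)^{-(d-1)}$.

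The main obstacle is matching $M_{\mathrm{early}}(t)^3 (1+t)^{-(d-1)}$ to the right-hand side of \eqref{r+decay}. Expanding the cube produces four terms of the form $(\gamma+\gamma^3 A_*)^j L^{-(3-j)(d-1)} B_*^{3-j}\,(1+t)^{-j-(d-1)}(1+t/L)^{-(3-j)}$ for $j = 0, 1, 2, 3$. The pure $j=3$ term is exactly $(\gamma+\gamma^3 A_*)^3/(1+t)^{d+2}$; the pure $j=0$ term reduces to $L^{-3(d-1)}B_*^3/(1+t/L)^{d-1}$ upon verifying that $(1+t/L)^{d-4}/(1+t)^{d-1} \leq 1$ for $L \geq 1$ and all $d \geq 2$ (a short case check using $1+t/L \leq 1+t$). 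The mixed $j = 1, 2$ terms are absorbed into a convex combination of these two via Young's inequality $a^j b^{3-j} \leq (j/3)a^3 + ((3-j)/3)b^3$. This bookkeeping is tedious but entirely routine.
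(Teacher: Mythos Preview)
Your proof is correct and follows essentially the same route as the paper's: reduce to the set $\{\tilde{\tau}_1>0\}$, split at $\tilde{\tau}_1=t/2$, apply Lemma~\ref{tau1geq} for the recent regime and Lemma~\ref{tau1leq} together with the $\xi_\perp$-confinement~\eqref{xiperp} for the distant regime. The only cosmetic difference is that the paper absorbs the cross terms via $(a+b)^3\le C(a^3+b^3)$ in one stroke, whereas you expand the cube and invoke Young's inequality; the verification $(1+t/L)^{d-4}\le (1+t)^{d-1}$ is the same in both. One small wording remark: in the case $\tilde{\tau}_1=0$ there is at most a \emph{single} wall reflection (after which no further cylinder hit can occur, by the geometric argument used in Proposition~\ref{r+nonnegative}), so ``only wall reflections'' overstates things slightly, but the conclusion $|\xi_0|=|\xi|$ is unaffected.
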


\begin{proof}
  We define two sets $A_{\leq t/2}$ and $A_{>t/2}$ as follows.
  \begin{align}
    & A_{\leq t/2}=\left\{ (x,\xi)\in I_{W}^{+}(t) \mid 0<\tilde{\tau}_1\leq t/2 \right\}, \\
    & A_{>t/2}=\left\{ (x,\xi)\in I_{W}^{+}(t) \mid t/2<\tilde{\tau}_1<t \right\}.
  \end{align}
  Let $(x,\xi)\in I_{W}^{+}(t)$. Note that we have $|\xi_0|=|\xi|$ for $(x,\xi)\notin A_{\leq t/2}\cup A_{>t/2}$. Therefore $(x,\xi)\notin A_{\leq t/2}\cup A_{>t/2}$ do not contribute to the integral~\eqref{r+} defining $r_{W}^{+}(t)$. Hence we have
  \begin{align}
    r_{W}^{+}(t)
    & =\bar{C}\int_{A_{\leq t/2}}(\xi_1-W(t))^2 \left( e^{-\beta_0 |\xi_{0}|^{2}}-e^{-\beta_0 |\xi|^2} \right) \, d\xi dS \\
    & \qquad +\bar{C}\int_{A_{>t/2}}(\xi_1-W(t))^2 \left( e^{-\beta_0 |\xi_{0}|^{2}}-e^{-\beta_0 |\xi|^2} \right) \, d\xi dS \\
    & =I+II.
  \end{align}
  We first derive an estimate for $I$. Let $(x,\xi)\in A_{\leq t/2}$. By inequality~\eqref{xiperp}, we have
  \begin{equation}
    |\xi_{\perp}|<\frac{2R}{t-\tau_1}\leq \frac{4R}{t}.
  \end{equation}
  By Lemma~\ref{tau1leq}, we have
  \begin{align}
    I
    & \leq C\left[ \frac{(\gamma+\gamma^3 A_*)^3}{(1+t)^3}+L^{-3(d-1)}\frac{B_{*}^{3}}{(1+t/L)^3} \right] \int_{|\xi_{\perp}|\leq 4R/t}e^{-\beta_0 |\xi_{\perp}|^2}\, d\xi_{\perp} \\
    & \leq C\left[ \frac{(\gamma+\gamma^3 A_*)^3}{(1+t)^{d+2}}+L^{-3(d-1)}\frac{B_{*}^{3}}{(1+t/L)^3(1+t)^{d-1}} \right]
  \end{align}
  for $d\geq 3$. If $d=2$, we have
  \begin{align}
    I
    & \leq C\left[ \frac{(\gamma+\gamma^3 A_*)^3}{(1+t)^3}+L^{-3}B_{*}^{3} \right] \int_{|\xi_{\perp}|\leq 4R/t}e^{-\beta_0 |\xi_{\perp}|^2}\, d\xi_{\perp} \\
    & \leq C\left[ \frac{(\gamma+\gamma^3 A_*)^3}{(1+t)^{4}}+L^{-3}\frac{B_{*}^{3}}{1+t} \right].
  \end{align}
  Next we derive an estimate for $II$. By Lemma~\ref{tau1geq}, we have
  \begin{equation}
    II\leq C\left[ \frac{(\gamma+\gamma^3 A_*)^3}{(1+t)^{3(d+2)}}+L^{-3(d-1)}\frac{B_{*}^{3}}{(1+t/L)^{3(d-1)}} \right].
  \end{equation}
  These estimates prove inequality~\eqref{r+decay}.
\end{proof}

\subsection{Analysis of the Characteristics for $(x,\xi)\in I_{W}^{-}(t)$}\label{sec:char-}
Let us take $W$ from $\Omega(\gamma,L,A_*,B_*,V_{\infty})$. To obtain an estimate for $r_{W}^{-}(t)$, we analyze the characteristics starting from $(x,\xi)\in I_{W}^{-}(t)$. Suppose further that $\xi_1>V_{\infty}$. Then if $0<\tau_2<\tau_1$, we have $x_1(\tau_1)=0$ and $x(\tau_2)\in C_{W}^{-}(\tau_2)$ from a simple geometrical reason. See Figure~\ref{fig:tau2w}. Hence we have
\begin{equation}
  (t-\tau_2)\xi_1+\int_{\tau_2}^{t}W(\sigma)\, d\sigma=2X(t).
\end{equation}
This can also be written as
\begin{equation}\label{relxi1Ww}
  \xi_1+\langle W \rangle_{\tau_2,t}=\frac{2X(t)}{t-\tau_2}.
\end{equation}
Furthermore, the following inequality must hold.
\begin{equation}\label{xiperp-}
  |x_{\perp}-(t-\tau_2)\xi_{\perp}|<R.
\end{equation}

\begin{figure}[htbp]
  \centering
  \begin{tikzpicture}
      \fill[pattern=north east lines] (-1,0) -- (-0.5,0) -- (-0.5,2.5) -- (-1,2.5) -- (-1,0);
      \draw (-0.5,0) -- (-0.5,2.5);
      \filldraw[fill=black!10,draw=black,dotted] (2,0.5) rectangle (2.5,2);
      \filldraw[fill=black!20,draw=black] (4,0.5) rectangle (4.5,2);
      \filldraw[thick,->,>=stealth,line width=2,black] (4.5,1.8) -- node[above] {$W(t)$} (5.2,1.8);
      \filldraw[fill=black,draw=black] (4,0.8) circle (0.05);
      \draw[->,thick,>=stealth] (4,0.8) -- (4.8,0.8) node[right] {$\xi_1$};
      \draw[semithick] (0,0.8) -- (4,0.8);
      \draw[semithick] (0,0.8) arc (270:90:0.4 and 0.5);
      \draw[semithick] (0,1.8) -- (2,1.8);
      \draw[thick,black] (2,1.8) -- (4,1.8);
      \filldraw[fill=black,draw=black] (2,1.8) circle (0.05);
      \draw[->,thick,>=stealth] (2,1.8) -- (1.2,1.8);
      \node[black] at (4.25,0.2) {$s=t$};
      \node[right,black] at (-0.4,1.1) {$s=\tau_1$};
      \node[above,black] at (2,1.8) {$s=\tau_2$};
      \draw[<->,thick] (-0.5,2.6) -- node[above] {$X(t)$} (4,2.6);
      \draw[thick,dotted] (-0.5,2.5) -- (-0.5,2.9);
      \draw[thick,dotted] (4,2) -- (4,2.9);
    \end{tikzpicture}
    \caption{Two dimensional picture of a pre-collision at $C_{W}^{-}(\tau_2)$ via pre-collision at the plane wall. The sum of the horizontal distance traversed by the cylinder and the characteristic curve $x(s)$ from $\tau_2$ to $t$ equals $2X(t)$.}
  \label{fig:tau2w}
\end{figure}
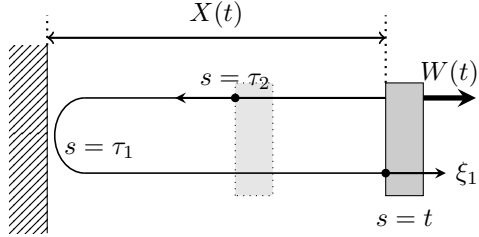

Now we prove the following lemma.

\begin{lemma}\label{tau2wnecessary}
  Let $W\in \Omega(\gamma,L,A_*,B_*,V_{\infty})$, $(x,\xi)\in I_{W}^{-}(t)$ and $\xi_1>V_{\infty}$. If $\tau_2>0$, then we have
  \begin{equation}\label{tau2wgeq}
    t-\tau_2\geq \frac{2(L+V_{\infty}t/2)}{\xi_1+V_{\infty}}.
  \end{equation}
\end{lemma}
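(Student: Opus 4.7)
The plan is to start from the geometric identity~\eqref{relxi1Ww}, which is the key constraint that the molecule's characteristic — after bouncing off the wall at $s=\tau_1$ and then colliding with the left face of the cylinder at $s=\tau_2$ — must satisfy. Rewriting that identity, we have
\begin{equation}
  t-\tau_2=\frac{2X(t)}{\xi_1+\langle W\rangle_{\tau_2,t}},
\end{equation}
so the lemma reduces to bounding the numerator below and the denominator above, using only the defining properties of $W\in\Omega(\gamma,L,A_*,B_*,V_\infty)$.

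For the denominator, since $W$ takes values in $[V_\infty/2,V_\infty)$ by the very definition of $\Omega$, we immediately get $\langle W\rangle_{\tau_2,t}<V_\infty$, so $\xi_1+\langle W\rangle_{\tau_2,t}<\xi_1+V_\infty$. For the numerator, we use $dX_W/dt=W(t)$ together with the initial condition $X_W(0)=L$ and the lower bound $W(s)\geq V_\infty/2$ to conclude
\begin{equation}
  X(t)=L+\int_0^t W(s)\,ds\geq L+\frac{V_\infty}{2}t.
\end{equation}
Combining the two bounds yields the claimed inequality.

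Before inserting these bounds one should briefly justify equation~\eqref{relxi1Ww} for the situation at hand: because $\xi_1>V_\infty>W(s)$ for every $s$, the trajectory can only reach $C_W^-(\tau_2)$ after having first bounced off the wall at $\tau_1$ (the molecule moves to the left faster than the cylinder, so a direct pre-collision at $C_W^+$ or $C_W^-$ between $\tau_2$ and $t$ is impossible, and hitting $C_W^+$ at $\tau_2$ is geometrically excluded once $\tau_1$ is a wall collision). This is exactly the configuration drawn in Figure~\ref{fig:tau2w} and is essentially the simple geometrical reason already invoked in the paragraph preceding \eqref{relxi1Ww}.

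There is no real obstacle here: the lemma is a direct consequence of the algebraic relation encoding the pre-collision geometry together with the two-sided bound $V_\infty/2\leq W(s)<V_\infty$ built into $\Omega$. The only mildly subtle point is to make sure that, under $\xi_1>V_\infty$ and $\tau_2>0$, the first pre-collision must indeed be with the wall and the second with $C_W^-$ (so that \eqref{relxi1Ww} applies rather than a variant coming from a different collision sequence); this is why the hypothesis $\xi_1>V_\infty$ is used.
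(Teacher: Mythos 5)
Your proposal is correct and follows essentially the same route as the paper: rewrite the geometric identity~\eqref{relxi1Ww} as $t-\tau_2=2X(t)/(\xi_1+\langle W\rangle_{\tau_2,t})$, then bound $X(t)\geq L+V_{\infty}t/2$ from $W\geq V_{\infty}/2$ and $\langle W\rangle_{\tau_2,t}<V_{\infty}$ from $W<V_{\infty}$. The extra justification you give for why the collision sequence must be wall-then-$C_{W}^{-}$ is exactly the ``simple geometrical reason'' the paper invokes in the paragraph preceding~\eqref{relxi1Ww}.
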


\begin{proof}
  By equation~\eqref{relxi1Ww}, we have
  \begin{equation}
    t-\tau_2=\frac{2X(t)}{\xi_1+\langle W \rangle_{\tau_2,t}}.
  \end{equation}
  Since $W(s)\geq V_{\infty}/2$, we have
  \begin{equation}
    X(t)=L+\int_{0}^{t}W(s)\, ds\geq L+V_{\infty}t/2.
  \end{equation}
  Moreover, since $W(s)<V_{\infty}$, we have $\langle W \rangle_{\tau_2,t}<V_{\infty}$. From these we obtain inequality~\eqref{tau2wgeq}.
\end{proof}

The lemma above shows a necessary condition for $\tau_2>0$. We give below a sufficient condition. This will be needed when we derive a lower bound for $r_{W}^{-}(t)$ in Section~\ref{sec:r-lowerbound}.

\begin{lemma}\label{tau2wsufficient}
  Let $W\in \Omega(\gamma,L,A_*,B_*,V_{\infty})$, $(x,\xi)\in I_{W}^{-}(t)$ and $\xi_1>V_{\infty}$. If the following conditions are satisfied, then $\tau_2>0$.
  \begin{enumerate}[label={\upshape(\roman*)}]
    \item $|x_{\perp}|\leq R/2$,
    \item $\displaystyle |\xi_{\perp}|\leq \frac{R(\xi_1+V_{\infty}/2)}{4(L+V_{\infty}t)}$ and
    \item $\displaystyle \xi_1\geq \frac{3}{2}V_{\infty}+\frac{2L}{t}\left[ =\frac{2(L+V_{\infty}t)}{t}-\frac{V_{\infty}}{2} \right]$.
  \end{enumerate}
\end{lemma}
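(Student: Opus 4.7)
My plan is to explicitly trace the backward trajectory of $(x,\xi)$ and show, under (i)--(iii), that it first strikes the plane wall at some $\tau_1\in(0,t)$, and, after reflection there, strikes the left face $C_{W}^{-}$ of the cylinder at some time $\hat{s}\in(0,\tau_1)$; the definition of $\tau_2$ as a supremum then gives $\tau_2\geq\hat{s}>0$.

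\textbf{Step 1 (first collision is the wall, $\tau_1>0$).} Because $\xi_1>V_{\infty}>W$, we have
\begin{equation}
X(s)-x_1(s)=(t-s)\bigl(\xi_1-\langle W\rangle_{s,t}\bigr)>0
\end{equation}
for every $s<t$, so the free trajectory stays strictly to the left of $X(s)$ and cannot hit $C_{W}^{\pm}$ before the wall. It therefore hits the wall at $\tau_1=t-X(t)/\xi_1$, and combining $X(t)\leq L+V_{\infty}t$ with condition (iii) gives $\xi_1 t>X(t)$, hence $\tau_1>0$.

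\textbf{Step 2 ($\tau_2>0$ via intermediate value theorem).} After the wall reflection, $\xi'(\tau_1)=(-\xi_1,\xi_{\perp})$, so for $s\in[0,\tau_1]$ the backward trajectory satisfies $x_1(s)=(\tau_1-s)\xi_1$ and $x_{\perp}(s)=x_{\perp}-(t-s)\xi_{\perp}$. Define $G(s)=x_1(s)-X(s)$; it is continuously differentiable with $G'(s)=-\xi_1-W(s)<0$, and $G(\tau_1)=-X(\tau_1)<0$. Using $\tau_1\xi_1=t\xi_1-X(t)$, $X(t)\leq L+V_{\infty}t$ and (iii),
\begin{equation}
G(0)=t\xi_1-X(t)-L\geq \tfrac{3}{2}V_{\infty}t+2L-(L+V_{\infty}t)-L=\tfrac{1}{2}V_{\infty}t>0.
\end{equation}
By continuity and the strict monotonicity of $G$, there is a unique $\hat{s}\in(0,\tau_1)$ with $x_1(\hat{s})=X(\hat{s})$. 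To confirm $x(\hat{s})\in C_{W}^{-}(\hat{s})$, I still need $|x_{\perp}(\hat{s})|<R$: the identity~\eqref{relxi1Ww} applied at $\hat{s}$ combined with $\langle W\rangle_{\hat{s},t}\geq V_{\infty}/2$ and $X(t)\leq L+V_{\infty}t$ yields $(t-\hat{s})(\xi_1+V_{\infty}/2)\leq 2(L+V_{\infty}t)$, so condition (ii) gives $(t-\hat{s})|\xi_{\perp}|\leq R/2$, and then (i) yields $|x_{\perp}(\hat{s})|\leq R/2+R/2=R$. Strictness holds modulo a measure-zero subset (e.g.\ those $(x,\xi)$ where $x_{\perp}$ and $-\xi_{\perp}$ are anti-aligned with both bounds tight), which is negligible in the integral bound where the lemma is applied in Section~\ref{sec:r-lowerbound}. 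Therefore $\tau_2\geq\hat{s}>0$.

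\textbf{Main difficulty.} The proof is essentially bookkeeping rather than analysis; the real content is that the three hypotheses are tightly calibrated. Condition (iii), in its equivalent form $\xi_1+V_{\infty}/2\geq 2(L+V_{\infty}t)/t$, is the sharp threshold that against the worst-case $X(t)=L+V_{\infty}t$ secures both $\tau_1>0$ and $G(0)>0$; (ii) then converts (iii) into a bound on the lateral drift $(t-\hat{s})|\xi_{\perp}|$ via~\eqref{relxi1Ww}; and (i) absorbs the remaining $R/2$ budget for the initial lateral offset. The only minor subtlety is that the final perpendicular inequality is not automatically strict, but its boundary locus is measure-zero and harmless downstream.
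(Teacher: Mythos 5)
Your proof is correct and follows essentially the same route as the paper's: locate the post-wall-reflection intersection time by the intermediate value theorem (your equation $G(\hat{s})=0$ is exactly the paper's relation $\xi_1+\langle W\rangle_{s,t}=2X(t)/(t-s)$), and then use (i), (ii) and the bound $t-\hat{s}\leq 2(L+V_{\infty}t)/(\xi_1+V_{\infty}/2)$ to verify the lateral condition. Your measure-zero caveat at the end is unnecessary: since $W(s)<V_{\infty}$ one has $X(t)<L+V_{\infty}t$ strictly for $t>0$, so $(t-\hat{s})|\xi_{\perp}|<R/2$ and hence $|x_{\perp}(\hat{s})|<R$ holds outright, which is how the paper obtains the strict inequality.
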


\begin{proof}
  Condition (iii) implies that
  \begin{equation}
    \xi_1+\langle W \rangle_{0,t}\geq \xi_1+\frac{V_{\infty}}{2}\geq \frac{2(L+V_{\infty}t)}{t}>\frac{2X(t)}{t}.
  \end{equation}
  By the intermediate value theorem, there exists $s\in (0,t)$ such that
  \begin{equation}\label{pseudotau2weq}
    \xi_1+\langle W \rangle_{s,t}=\frac{2X(t)}{t-s}.
  \end{equation}
  Let $\sigma_2$ be the largest $s\in (0,t)$ satisfying equation~\eqref{pseudotau2weq}. If we can show that
  \begin{equation}\label{pseudotau2wperp}
    |x_{\perp}-(t-\sigma_2)\xi_{\perp}|<R,
  \end{equation}
  we can easily see that $\tau_2=\sigma_2\in (0,t)$ and the lemma is proved. To prove inequality~\eqref{pseudotau2wperp}, note that by conditions (i) and (ii), we have
  \begin{equation}
    |x_{\perp}-(t-\sigma_2)\xi_{\perp}|\leq \frac{R}{2}+\frac{R(\xi_1+V_{\infty}/2)}{4(L+V_{\infty}t)}(t-\sigma_2)
  \end{equation}
  and by the definition of $\sigma_2$, we have
  \begin{equation}
    t-\sigma_2=\frac{2X(t)}{\xi_1+\langle W \rangle_{\sigma_2,t}}<\frac{2(L+V_{\infty}t)}{\xi_1+V_{\infty}/2}.
  \end{equation}
  These imply inequality~\eqref{pseudotau2wperp}.
\end{proof}

\subsection{Non-Negativity of $r_{W}^{-}(t)$ and its Upper Bound}\label{sec:r-}
First we prove the non-negativity of $r_{W}^{-}(t)$.

\begin{proposition}\label{r-nonnegative}
  For $W\in \Omega(\gamma,L,A_*,B_*,V_{\infty})$, we have $r_{W}^{-}(t)\geq 0$.
\end{proposition}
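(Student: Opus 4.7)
The strategy is to mirror Proposition~\ref{r+nonnegative}, with the roles of the two faces of the cylinder interchanged. From definition~\eqref{r-},
\begin{equation*}
  r_W^-(t)=\bar{C}\int_{I_W^-(t)}(\xi_1-W(t))^2\bigl(e^{-\beta_0|\xi|^2}-e^{-\beta_0|\xi_0|^2}\bigr)\,d\xi\,dS,
\end{equation*}
and non-negativity will follow once we show $|\xi_0|\ge|\xi|$ almost everywhere on $I_W^-(t)$. Each admissible pre-collision (wall, $C_W^{+}$, or $C_W^{-}$) reflects only the $\xi_1$ component, so $\xi_{0\perp}=\xi_{\perp}$ and it suffices to verify $|\xi_{01}|\ge|\xi_1|$.

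Using $\xi_1(\tau_{n+1})=\xi'_1(\tau_n)$, this reduces to the single-step inequality $|\xi'_1(\tau_n)|\ge|\xi_1(\tau_n)|$ at every pre-collision $\tau_n>0$. Wall pre-collisions yield equality via $\xi'_1(\tau_n)=-\xi_1(\tau_n)$. At a cylinder pre-collision the reflection formula $\xi'_1(\tau_n)=2W(\tau_n)-\xi_1(\tau_n)$ gives
\begin{equation*}
  |\xi'_1(\tau_n)|^2-|\xi_1(\tau_n)|^2=4W(\tau_n)\bigl[W(\tau_n)-\xi_1(\tau_n)\bigr],
\end{equation*}
which is non-negative as soon as $\xi_1(\tau_n)\le W(\tau_n)$, since $W(\tau_n)\ge V_\infty/2>0$.

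The crux is therefore the sign-reversed analogue of inequality~\eqref{nonnegativity_proof1}: at each cylinder pre-collision for $(x,\xi)\in I_W^-(t)$, $\xi_1(\tau_n)\le W(\tau_n)$. I would establish this by replaying, with reversed inequalities, the sup-based reasoning of Proposition~\ref{r+nonnegative}. Starting from $x\in C_W^-(t)$ and discarding the measure-zero exceptional sets flagged in Section~\ref{sec2}, every cylinder pre-collision can be shown to lie on $C_W^-$, so between two consecutive such pre-collisions one still has $\xi_1(\tau_n)=\langle W\rangle_{\tau_n,\tau_{n-1}}$ and the same algebraic identity
\begin{equation*}
  (\tau_{n-1}-s)\xi_1(\tau_n)-\int_{s}^{\tau_{n-1}}W(\sigma)\,d\sigma=\int_{\tau_n}^{s}W(\sigma)\,d\sigma-(s-\tau_n)\xi_1(\tau_n)
\end{equation*}
is available. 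The new ingredient is that the trajectory on $(\tau_n,\tau_{n-1})$ now lies strictly to the \emph{left} of the cylinder, so that \eqref{taunw} furnishes the reversed strict inequality $(\tau_{n-1}-s)\xi_1(\tau_n)>\int_{s}^{\tau_{n-1}}W(\sigma)\,d\sigma$ for $s\in(\tau_n,\tau_{n-1})$. Hence $\int_{\tau_n}^{s}W(\sigma)\,d\sigma>(s-\tau_n)\xi_1(\tau_n)$, and letting $s\to\tau_n^+$ yields $\xi_1(\tau_n)\le W(\tau_n)$. If instead $\tau_{n-1}$ is a wall pre-collision and $\tau_n$ is a cylinder pre-collision, one can bypass the sup argument and read off directly $\xi_1(\tau_n)=-X(\tau_n)/(\tau_{n-1}-\tau_n)<0\le W(\tau_n)$ from the geometry of the wall bounce.

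The main obstacle I anticipate is the geometric claim that, for $(x,\xi)\in I_W^-(t)$, every cylinder pre-collision lies on $C_W^-$ rather than $C_W^+$; this is the minus-case half of the assertion noted in Section~\ref{sec2}, and is the only point at which the proof departs in substance from that of Proposition~\ref{r+nonnegative}. Once it is justified by an induction over pre-collisions together with the measure-zero exceptions, chaining the single-step inequalities gives $|\xi_{01}|\ge|\xi_1|$ almost everywhere on $I_W^-(t)$, whence $r_W^-(t)\ge0$.
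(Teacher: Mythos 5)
Your proposal is correct and follows essentially the same route as the paper: the paper's proof simply says "basically the same as Proposition~\ref{r+nonnegative} with the inequalities reversed" and singles out the one genuinely new case --- a wall bounce followed by a collision with $C_W^-$, where $\xi_1'(\tau_{n+1})=2W(\tau_{n+1})+\xi_1(\tau_n)$ forces a strict increase of $|\xi_1|$ --- which is exactly the content of your observation that $\xi_1(\tau_n)=-X(\tau_n)/(\tau_{n-1}-\tau_n)<0\leq W(\tau_n)$ followed by the reflection identity. The geometric point you flag as the main obstacle (backward characteristics from $I_W^-(t)$ only meet the cylinder on $C_W^-$) is taken for granted in the paper as well ("from a simple geometrical reason"), so your treatment is, if anything, slightly more explicit than the original.
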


\begin{proof}
  The proof is basically the same as the proof of Proposition~\ref{r+nonnegative}. It suffices to note that if $x_1(\tau_n)=0$ and $x(\tau_{n+1})\in C_{W}^{-}(\tau_{n+1})$, then
  \begin{equation}
    \xi_{1}'(\tau_{n+1})=2W(\tau_{n+1})+\xi_1(\tau_n)
  \end{equation}
  and therefore $|\xi_{1}'(\tau_{n+1})|>|\xi_{1}(\tau_n)|$.
\end{proof}

We next prove an upper bound for $r_{W}^{-}(t)$.

\begin{proposition}\label{r-upperbound}
  Let $W\in \Omega(\gamma,L,A_*,B_*,V_{\infty})$. Then
  \begin{equation}
    0\leq r_{W}^{-}(t)\leq C\left[ \frac{(\gamma+\gamma^3 A_*)^3}{(1+t)^{3(d+2)}}+L^{-3(d-1)}\frac{B_{*}^{3}}{(1+t/L)^{3(d-1)}}+\frac{L^{-(d-1)}}{(1+t/L)^{d-1}} \right].
  \end{equation}
\end{proposition}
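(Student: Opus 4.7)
My plan is to bound $r_W^-(t)$ by splitting $I_W^-(t)$ according to whether $\xi_1 \leq V_\infty$ or $\xi_1 > V_\infty$, and to exploit a different smallness mechanism in each case. Non-negativity is already given by Proposition~\ref{r-nonnegative}, so I focus on the upper bound.

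I split $I_W^-(t) = A_1 \cup A_2$ with $A_1 = \{ (x,\xi) \in I_W^-(t) \mid W(t) < \xi_1 \leq V_\infty \}$ and $A_2 = \{ (x,\xi) \in I_W^-(t) \mid \xi_1 > V_\infty \}$. For the contribution from $A_1$, the factor $(\xi_1 - W(t))^2$ is small; bounding the exponential difference trivially by $e^{-\beta_0 |\xi|^2}$ and integrating the Gaussian factor over $\xi_\perp$ yields
\begin{equation}
\int_{A_1} (\xi_1 - W(t))^2 \left( e^{-\beta_0 |\xi|^2} - e^{-\beta_0 |\xi_0|^2} \right) d\xi \, dS \leq C \int_{W(t)}^{V_\infty} (\xi_1 - W(t))^2 \, d\xi_1 \leq C(V_\infty - W(t))^3.
\end{equation}
After absorbing $\gamma e^{-C_+ t}$ into $C\gamma/(1+t)^{d+2}$ in inequality~\eqref{upperbound} and cubing, this produces the first two terms of the desired bound.

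For the contribution from $A_2$, only molecules with $\tau_2 > 0$ can give a nonzero integrand; since direct cylinder pre-collisions are impossible when $\xi_1 > V_\infty > \langle W \rangle_{s,t}$, the first pre-collision at $\tau_1$ must be with the wall and the second at $\tau_2$ with $C_W^-(\tau_2)$. Lemma~\ref{tau2wnecessary} then gives $t - \tau_2 \geq 2(L + V_\infty t/2)/(\xi_1 + V_\infty)$, and the analog of inequality~\eqref{xiperp-} restricts $\xi_\perp$ to a set of measure at most $C[(\xi_1 + V_\infty)/(L + V_\infty t)]^{d-1}$. In the simplest case $\tau_3 = 0$ one has $|\xi_0|^2 - |\xi|^2 = 4W(\tau_2)(W(\tau_2) + \xi_1) \leq C(1 + \xi_1)$, so bounding the exponential difference by $\beta_0 (|\xi_0|^2 - |\xi|^2) e^{-\beta_0 |\xi|^2}$, the contribution of $A_2$ is dominated by $C L^{-(d-1)}/(1+t/L)^{d-1}$ multiplied by a $\xi_1$-integral that converges thanks to the Gaussian decay $e^{-\beta_0 \xi_1^2}$. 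This produces the third term.

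The main obstacle I anticipate is handling the general case of multiple reflections in $A_2$: after $n$ wall-cylinder bounces, $|\xi_{01}|$ grows like $\xi_1 + 2n V_\infty$, so one must argue that either the higher-order reflections occur only on a set of small measure (controlled geometrically by successively tighter $\xi_\perp$ constraints) or that their contribution is absorbed by the Gaussian factor $e^{-\beta_0 \xi_1^2}$. A chain-of-reflections estimate in the spirit of the proof of Proposition~\ref{r-nonnegative}, combined with iterated application of Lemma~\ref{tau2wnecessary} to successive pre-collisions, should keep the $\xi_1$-integral convergent without disturbing the overall $L^{-(d-1)}(1+t/L)^{-(d-1)}$ decay rate.
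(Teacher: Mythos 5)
Your decomposition and the two smallness mechanisms are exactly those of the paper: the paper splits $r_W^-(t)$ into the integral over $W(t)<\xi_1<V_\infty$ (bounded by $C(V_\infty-W(t))^3$ and then by the cube of the right-hand side of \eqref{upperbound}) and the integral over $\xi_1>V_\infty$ (where only $\tau_2>0$ contributes, and Lemma~\ref{tau2wnecessary} together with \eqref{xiperp-} confines $\xi_\perp$ to a set of measure $C\bigl[(\xi_1+V_\infty)/(2L+V_\infty t)\bigr]^{d-1}$). The one place you diverge is in how you bound the exponential difference on $A_2$, and this is where you manufacture an obstacle that is not there. The paper simply uses the crude bound $e^{-\beta_0|\xi|^2}-e^{-\beta_0|\xi_0|^2}\leq e^{-\beta_0|\xi|^2}$, which is valid for any number of pre-collisions; the entire $L^{-(d-1)}(1+t/L)^{-(d-1)}$ decay then comes from the $\xi_\perp$-constraint alone, and that constraint is implied by $\tau_2>0$ no matter how many further reflections follow (extra reflections only shrink the contributing set and, by the argument of Proposition~\ref{r-nonnegative}, only increase $|\xi_0|$, which makes the integrand smaller, not larger). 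So your mean-value-theorem refinement $\beta_0(|\xi_0|^2-|\xi|^2)e^{-\beta_0|\xi|^2}$, and the ensuing chain-of-reflections analysis you anticipate needing, can be dropped entirely: replace them with the trivial bound and your proof closes with no remaining gap.
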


\begin{proof}
  We split $r_{W}^{-}(t)$ into two parts.
  \begin{align}
    r_{W}^{-}(t)
    & =\bar{C}\int_{C_{W}^{-}(t)}\, dS\left[ \int_{W(t)<\xi_1<V_{\infty}}(\xi_1-W(t))^2 \left( e^{-\beta_0 |\xi|^2}-e^{-\beta_0 |\xi_{0}|^2} \right) \, d\xi \right. \\
    & \quad \qquad \qquad \qquad \qquad +\left. \int_{\xi_1>V_{\infty}}(\xi_1-W(t))^2 \left( e^{-\beta_0 |\xi|^2}-e^{-\beta_0 |\xi_{0}|^2} \right) \, d\xi \right] \\
    & =I+II.
  \end{align}
  By inequality~\eqref{upperbound}, it follows that
  \begin{equation}
    I\leq C(V_{\infty}-W(t))^3 \leq C\left[ \frac{(\gamma+\gamma^3 A_*)^3}{(1+t)^{3(d+2)}}+L^{-3(d-1)}\frac{B_{*}^{3}}{(1+t/L)^{3(d-1)}} \right].
  \end{equation}
  For $II$, we use Lemma~\ref{tau2wnecessary}. Note that since $\xi_1>V_{\infty}$, we have $\xi_{01}=\pm \xi_1$ if $\tau_1=0$ or $\tau_2=0$ and there is no contribution to the integral defining $II$. Therefore we can assume that $\tau_2>0$. Thus equation~\eqref{xiperp-} and Lemma~\ref{tau2wnecessary} imply that
  \begin{equation}
    |\xi_{\perp}|\leq \frac{2R}{t-\tau_2}\leq \frac{2R(\xi_1+V_{\infty})}{2L+V_{\infty}t}
  \end{equation}
  and we have
  \begin{align}
    II
    & \leq C\int_{V_{\infty}}^{\infty}(\xi_1-W(t))^2 e^{-\beta_0 \xi_{1}^{2}}\, d\xi_1 \int_{|\xi_{\perp}|\leq \frac{2R(\xi_1+V_{\infty})}{2L+V_{\infty}t}}e^{-\beta_0 |\xi_{\perp}|^{2}}\, d\xi_{\perp} \\
    & \leq C\frac{L^{-(d-1)}}{(1+t/L)^{d-1}}\int_{V_{\infty}}^{\infty}(\xi_1-V_{\infty}/2)^2 (\xi_1+V_{\infty})^{d-1}e^{-\beta_0 \xi_{1}^{2}}\, d\xi_1 \\
    & \leq C\frac{L^{-(d-1)}}{(1+t/L)^{d-1}}.
  \end{align}
  These estimates for $I$ and $II$ prove the proposition.
\end{proof}

\subsection{Proof of $V_W \in \Omega$}\label{sec:VW}
Let $W\in \Omega=\Omega(\gamma,L,A_*,B_*,V_{\infty})$. We prove here that $V_W\in \Omega$ for sufficiently small $\gamma$ and sufficiently large $L$. Two positive constants $A_+=A_+(\beta_0,p_0,R,V_{\infty},d)$ and $B_+=B_+(\beta_0,p_0,R,V_{\infty},d)$ are chosen appropriately and set equal to $A_*$ and $B_*$ respectively.

\begin{proposition}\label{VWOmega}
  There exist positive constants $\gamma_0$, $L_0$, $A_+$ and $B_+$ depending only on $\beta_0$, $p_0$, $R$, $V_{\infty}$ and $d$ such that for all $\gamma \in (0,\gamma_0]$ and $L\in [L_0,\infty)$, $W\in \Omega=\Omega(\gamma,L,A_+,B_+,V_{\infty})$ implies $V_W \in \Omega$, where $V_W(t)$ is defined by equations~\eqref{map}, \eqref{mapIC} and \eqref{KW}. Moreover, inequality~\eqref{upperbound} is strict, that is,
  \begin{equation}
    V_{\infty}-V_W(t)<\gamma e^{-C_{+}t}+\gamma^3 \frac{A_+}{(1+t)^{d+2}}+L^{-(d-1)}\frac{B_+}{(1+t/L)^{d-1}}
  \end{equation}
  for $t\geq 0$.
\end{proposition}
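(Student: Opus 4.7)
The ODE in~\eqref{map} for $V_\infty - V_W(t)$ is a scalar linear first-order equation with time-dependent coefficient $K_W(t)$ and non-homogeneous term $r_W^+(t) + r_W^-(t)$. My first step is to write down the explicit solution via the integrating factor:
\begin{equation*}
V_\infty - V_W(t) = \gamma\, e^{-\int_0^t K_W(\sigma)\,d\sigma} + \int_0^t e^{-\int_s^t K_W(\sigma)\,d\sigma}\bigl[r_W^+(s) + r_W^-(s)\bigr]\,ds.
\end{equation*}
Since $r_W^\pm(s) \geq 0$ by Propositions~\ref{r+nonnegative} and~\ref{r-nonnegative}, and since $K_W \leq C_-$ by~\eqref{KWestimate}, the first term alone furnishes the lower bound $V_\infty - V_W(t) \geq \gamma e^{-C_- t}$, that is, inequality~\eqref{lowerbound1}.

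For the upper bound, I would use $K_W \geq C_+$ together with the bounds from Propositions~\ref{r+upperbound} and~\ref{r-upperbound}. For $L \geq L_0$ sufficiently large, the $L^{-3(d-1)}$ contributions are absorbed into the $L^{-(d-1)}$ term, yielding
\begin{equation*}
r_W^+(s) + r_W^-(s) \leq C\left[\frac{(\gamma + \gamma^3 A_+)^3}{(1+s)^{d+2}} + \frac{L^{-(d-1)}}{(1+s/L)^{d-1}}\right].
\end{equation*}
The desired estimate then reduces to two convolution-type inequalities: for each $k \geq 1$,
\begin{equation*}
\int_0^t e^{-C_+(t-s)}\frac{ds}{(1+s)^k} \leq \frac{C}{(1+t)^k} \quad \text{and} \quad \int_0^t e^{-C_+(t-s)}\frac{ds}{(1+s/L)^{k}} \leq \frac{C}{(1+t/L)^{k}},
\end{equation*}
both proved by splitting at $s = t/2$ and bounding each piece separately; crucially, the constants are independent of $L$. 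Combining everything produces
\begin{equation*}
V_\infty - V_W(t) \leq \gamma e^{-C_+ t} + C_* (\gamma + \gamma^3 A_+)^3 \frac{1}{(1+t)^{d+2}} + C_* L^{-(d-1)} \frac{1}{(1+t/L)^{d-1}}
\end{equation*}
for a constant $C_* = C_*(\beta_0, p_0, R, V_\infty, d)$.

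The fixed-point closure is then purely algebraic: I fix $A_+$ and $B_+$ strictly larger than $C_*$, and then shrink $\gamma_0$ so that $C_*(1 + \gamma_0^2 A_+)^3 < A_+$. These strict choices directly yield the strict form of the upper bound~\eqref{upperbound} required in the proposition. It remains to check the membership conditions in $\Omega$. The lower bound $V_\infty - V_W(t) \geq \gamma e^{-C_- t}$ gives $V_W(t) < V_\infty$ strictly, while further shrinking $\gamma_0$ and enlarging $L_0$ ensures the upper bound stays below $V_\infty/2$, so that $V_W(t) \geq V_\infty/2$; Lipschitz continuity follows from the ODE since the right-hand side of~\eqref{map} is bounded in $L^\infty(0,\infty)$ under these estimates, and the initial condition $V_W(0) = V_0$ is built into~\eqref{mapIC}.

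The main technical obstacle I anticipate is the careful bookkeeping in the convolution estimate against the slow tail $L^{-(d-1)}(1 + s/L)^{-(d-1)}$: one must verify that convolving with $e^{-C_+(t-s)}$ preserves the algebraic decay at the scale $L$ without introducing an $L$-dependent constant. This hinges on treating the regimes $t \leq L$ and $t > L$ separately and exploiting the fact that $1/(1+s/L)^{d-1}$ is essentially constant on windows of length $O(1) \ll L$, so the exponential weight acts only in an $O(1)$ neighborhood of $s = t$ and extracts the value of the slow tail there. Once this step is in place, everything else is bookkeeping of the constants $A_+$, $B_+$, $\gamma_0$, $L_0$.
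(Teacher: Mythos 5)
Your proposal is correct and follows essentially the same route as the paper: Duhamel's formula, non-negativity of $r_W^{\pm}$ for the lower bound, the upper bounds of Propositions~\ref{r+upperbound} and~\ref{r-upperbound} fed into a convolution estimate obtained by splitting at $s=t/2$, and then the same algebraic closure ($A_+$, $B_+$ fixed in terms of the convolution constant, $\gamma_0$ small and $L_0$ large to absorb the cubic terms, with the strictness of the final inequality coming from the strict choice of constants). The only cosmetic difference is that you absorb the $L^{-3(d-1)}B_+^3$ contribution before the convolution rather than after, which is harmless since $B_+$ is determined by a constant independent of $B_+$ and $L_0$ may then be chosen afterwards, exactly as in the paper.
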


\begin{proof}
  Let $W\in \Omega(\gamma,L,A_*,B_*,V_{\infty})$. First, note that by the non-negativity of $r_{W}^{\pm}(t)$ (Propositions~\ref{r+nonnegative} and~\ref{r-nonnegative}) and Lemma~\ref{D0properties}, we have
  \begin{align}
    V_{\infty}-V_W(t)
    & =\gamma e^{-\int_{0}^{t}K_W(\sigma)\, d\sigma}+\int_{0}^{t}e^{-\int_{s}^{t}K_W(\sigma)\, d\sigma}\left[ r_{W}^{+}(s)+r_{W}^{-}(s) \right] \, ds \\
    & \geq \gamma e^{-C_{-}t}.
  \end{align}
  Therefore, inequality~\eqref{lowerbound1} is proved for $V_W(t)$. We next prove inequality~\eqref{upperbound}. By Propositions~\ref{r+upperbound} and \ref{r-upperbound}, we have for both cases $d\geq 3$ and $d=2$
  \begin{align}
    V_{\infty}-V_W(t)
    & =\gamma e^{-\int_{0}^{t}K_W(\sigma)\, d\sigma}+\int_{0}^{t}e^{-\int_{s}^{t}K_W(\sigma)\, d\sigma}\left[ r_{W}^{+}(s)+r_{W}^{-}(s) \right] \, ds \\
    & \leq \gamma e^{-C_{+}t}+C(\gamma+\gamma^3 A_*)^3 \int_{0}^{t}\frac{e^{-C_{+}(t-s)}}{(1+s)^{d+2}}\, ds \\
    & \qquad +CL^{-3(d-1)}B_{*}^{3}\int_{0}^{t}\frac{e^{-C_{+}(t-s)}}{(1+s/L)^{d-1}}\, ds \\
    & \qquad +CL^{-(d-1)}\int_{0}^{t}\frac{e^{-C_{+}(t-s)}}{(1+s/L)^{d-1}}\, ds.
  \end{align}
  Splitting the integral at $t/2$, we obtain
  \begin{align}
    V_{\infty}-V_W(t)\leq \gamma e^{-C_{+}t}+\tilde{C}
    & \left[ (\gamma+\gamma^3 A_*)^3 \frac{1}{(1+t)^{d+2}} \right. \\
    & \qquad +\left. L^{-3(d-1)}\frac{B_{*}^{3}}{(1+t/L)^{d-1}}+\frac{L^{-(d-1)}}{(1+t/L)^{d-1}} \right]
  \end{align}
  for some $\tilde{C}=\tilde{C}(\beta_0,p_0,R,V_{\infty},d)>0$. Now we take $A_+=B_+=A_*=B_*=2\tilde{C}$. Note that $A_+$ and $B_+$ depends only on $\beta_0$, $p_0$, $R$, $V_{\infty}$ and $d$. Finally, we take $\gamma_0$ small enough and $L_0$ large enough so that
  \begin{align}
    \tilde{C}(1+\gamma_{0}^{2}A_+)^3 & <A_+, \\
    \tilde{C}\left( L_{0}^{-2(d-1)}B_{+}^{3}+1 \right) & <B_+.
  \end{align}
  Note that this smallness and largeness depend only on the above mentioned parameters. With this $\gamma_0$ and $L_0$, we have for all $\gamma \in (0,\gamma_0]$ and $L\in [L_0,\infty)$
  \begin{equation}
    V_{\infty}-V_W(t)<\gamma e^{-C_{+}t}+\gamma^3 \frac{A_+}{(1+t)^{d+2}}+L^{-(d-1)}\frac{B_+}{(1+t/L)^{d-1}}.
  \end{equation}
  This proves inequality~\eqref{upperbound} for $V_W(t)$ and the inequality holds strictly. Note that $V_W(t)$ is Lipschitz continuous because $r_{W}^{\pm}(t)$ are bounded. It remains to show that $V_W(t)\geq V_{\infty}/2$. Take $\gamma_0$ small enough and $L_0$ large enough so that
  \begin{equation}
    \gamma_0+\gamma_{0}^{3}A_{+}+L_{0}^{-(d-1)}B_{+}\leq V_{\infty}/2.
  \end{equation}
  Then we have by inequality~\eqref{upperbound} that $V_{\infty}-V_W(t)\leq V_{\infty}/2$ for all $\gamma \in (0,\gamma_0]$ and $L\in [L_0,\infty)$. Thus we have $V_W(t)=V_{\infty}-(V_{\infty}-V_W(t))\geq V_{\infty}/2$.
\end{proof}

\subsection{Proof of the First Part of Theorem~\ref{theorem1}}\label{sec:proof1}
We prove here the existence of a fixed point for the map $W\mapsto V_W$. Let $C_b([0,\infty))$ be the space of bounded continuous functions on the interval $[0,\infty)$. Let $\gamma_0$, $L_0$, $A_+$ and $B_+$ be as in Proposition~\ref{VWOmega}. For $\gamma \in (0,\gamma_0]$ and $L\in [L_0,\infty)$, define a convex subset $\mathscr{K}$ of $C_b([0,\infty))$ by
\begin{equation}
  \mathscr{K}=\left\{ W\in \Omega(\gamma,L,A_+,B_+,V_{\infty}) \mathrel{\Big|} \operatorname{ess\ sup}_{0\leq t<\infty}\left( |W(t)|+|dW(t)/dt| \right) \leq M \right\}
\end{equation}
and
\begin{equation}
  M=V_{\infty}+C_{-}\gamma +C\left[ (\gamma+\gamma^3 A_+)^3 +L^{-3(d-1)}B_{+}^{3}+L^{-(d-1)} \right].
\end{equation}
By Propositions~\ref{r+upperbound}, \ref{r-upperbound} and~\ref{VWOmega}, the map $W\mapsto V_W$ maps $\mathscr{K}$ into itself. The Arzel\`{a}--Ascoli theorem implies that $\mathscr{K}$ is a compact subset of $C_b([0,\infty))$.\footnote{We can apply the Arzel\`{a}--Ascoli theorem on a non-compact space $[0,\infty)$ because $V_{\infty}-W(t)$ is uniformly decaying for $W\in \mathscr{K}$.}

We prove next that the map $\mathscr{K}\ni W\mapsto V_W \in \mathscr{K}$ is continuous in the topology of $C_b([0,\infty))$.

\begin{proposition}\label{mapcont}
  Let $\{ W_j \}_{j=1}^{\infty}\subset \mathscr{K}$, $W\in \mathscr{K}$ and $W_j \to W$ in $C_b([0,\infty))$. Then we have $r_{W_j}^{\pm}(t)\to r_{W}^{\pm}(t)$ for all $t\geq 0$.
\end{proposition}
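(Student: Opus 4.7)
The plan is to fix $t\geq 0$, rewrite $r_{W}^{\pm}(t)$ as an integral on a domain that does not depend on $W$, and then apply Lebesgue's dominated convergence theorem. Concretely, parametrize $C_{W}^{+}(t)$ by $x_\perp\in\{|x_\perp|<R\}$ (and similarly $C_{W}^{-}(t)$), and write
\begin{equation}
r_{W}^{+}(t)=\bar{C}\int_{|x_\perp|<R}dx_\perp\int_{\mathbb{R}^{d}}\mathbf{1}_{\{\xi_1<W(t)\}}(\xi_1-W(t))^{2}\bigl(e^{-\beta_0|\xi_0(W)|^{2}}-e^{-\beta_0|\xi|^{2}}\bigr)\,d\xi,
\end{equation}
where $\xi_0(W)=\xi(0;(X_W(t)+h,x_\perp),\xi,t)$ is the trace of the backward characteristic at $s=0$ computed with the boundary motion $W$. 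Convergence $W_j\to W$ in $C_b([0,\infty))$ entails $W_j(t)\to W(t)$ and $X_{W_j}(t)\to X_W(t)$, so the indicator $\mathbf{1}_{\{\xi_1<W_j(t)\}}$ converges to $\mathbf{1}_{\{\xi_1<W(t)\}}$ pointwise off the null set $\{\xi_1=W(t)\}$. Domination by the integrable function $(|\xi_1|+V_\infty)^{2}e^{-\beta_0|\xi|^{2}}$ (using $|\xi_0|\geq|\xi|$ from Proposition~\ref{r+nonnegative}) reduces the problem to proving $\xi_0(W_j)\to\xi_0(W)$ for almost every $(x_\perp,\xi)$.

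To establish this pointwise convergence, I would fix $(x,\xi)\in I_{W}^{+}(t)$ lying outside the exceptional null set: assume for the limit trajectory under $W$ that (a) only finitely many pre-collisions occur, say $\tau_{m+1}=0$, (b) at each $\tau_n\in(0,t)$ the trajectory meets the corresponding reflecting surface transversally, i.e.\ $\xi_1(\tau_n)\neq W(\tau_n)$ when $x(\tau_n)\in C_W^{\pm}(\tau_n)$ and $|x_\perp(\tau_n)|\neq R$, and (c) $x_1(\tau_n)\neq 0$ when the collision is with the cylinder, etc. The discussion following~\eqref{taunw} guarantees that the complement of this set has $(2d-1)$-dimensional Hausdorff measure zero. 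On this good set I would proceed by induction on $n$: assuming $\tau_n^{j}\to\tau_n$ and $(x^{j}(\tau_n^{j}),\xi^{j}(\tau_n^{j}))\to(x(\tau_n),\xi(\tau_n))$, the post-reflection velocity is a continuous function of $W(\tau_n)$, $x(\tau_n)$ and $\xi(\tau_n)$ (because the reflection rule is either $\xi_1\mapsto 2W(\tau_n)-\xi_1$ or $\xi_1\mapsto-\xi_1$, both continuous), so the reflected data converge. Then $\tau_{n+1}^{j}\to\tau_{n+1}$ follows from the implicit-function-type statement that the smallest $s<\tau_n$ solving the hitting equation for $\partial C_W^{\pm}(s)\cup\{x_1=0\}$ depends continuously on the data at a transversal hit, where the equation has strict sign change across $s=\tau_{n+1}$ (this uses the uniform convergence $W_j\to W$ to control the trajectory of the cylinder boundary). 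Iterating $m$ times yields $\xi_0(W_j)\to\xi_0(W)$.

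The last small issue is the slight mismatch of the integration domain: for $\xi_1$ between $W_j(t)$ and $W(t)$, either the $W_j$-integrand or the $W$-integrand is absent. Since $|W_j(t)-W(t)|\to0$ and $(\xi_1-W(t))^{2}$ is bounded on a neighborhood of $W(t)$, the contribution of this strip is $O(|W_j(t)-W(t)|^{3})$ and vanishes in the limit. Putting the three ingredients together (domain mismatch is negligible, integrand converges a.e., integrable dominating function exists) yields $r_{W_j}^{+}(t)\to r_{W}^{+}(t)$, and a completely analogous argument, using Lemma~\ref{tau2wnecessary} to control the decay of the $\xi$-support when $\xi_1>V_\infty$, handles $r_{W}^{-}(t)$.

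The main obstacle, and the step I would spend the most care on, is the inductive continuity of the pre-collision times $\tau_n$ with respect to $W$. The moving boundary makes the hitting equation implicit, and one must carefully separate the generic transversal case (where continuity follows from a standard sign-change/intermediate-value argument plus the uniform convergence $W_j\to W$ and $X_{W_j}\to X_W$ uniformly on $[0,t]$) from the tangential and corner cases, which must be shown to form a null set in $(x,\xi)$. Once this transversality is in hand, everything else is a routine application of dominated convergence.
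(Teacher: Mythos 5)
Your plan is essentially the paper's own proof: the paper likewise fixes $(x,\xi)$ outside the null set of tangential/degenerate hits, shows by induction on the number $m$ of pre-collisions that $\tau_{n,j}\to\tau_n$ and that the reflected velocities converge (identifying the hitting time as the largest root of $\xi_1=\langle W\rangle_{s,t}$ and passing to subsequential limits, which is your sign-change/transversality argument), and then concludes by the Lebesgue convergence theorem. Two minor slips that do not affect the argument: for $(x,\xi)\in I_{W}^{+}(t)$ the correct inequality from Proposition~\ref{r+nonnegative} is $|\xi_0|\leq|\xi|$ (the domination still holds because any contributing trajectory must have a cylinder pre-collision, which confines $\xi_1$ to a bounded interval; the inequality $|\xi_0|\geq|\xi|$ you quote is the one valid for $I_{W}^{-}(t)$), and the relevant solution of the backward hitting equation is the \emph{largest} $s$ below $\tau_n$, not the smallest.
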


\begin{proof}
  We only treat $r_{W}^{+}(t)$ here because $r_{W}^{-}(t)$ can be handled similarly. Let $(x,\xi)\in I_{W}^{+}(t)$. Note that for sufficiently large $j$, we have $\xi_1<W_j(t)$. Take $m$ to be the integer satisfying $\tau_m>0$ and $\tau_{m+1}=0$ for the dynamics given by $W(t)$. Let $\tau_{n,j}$ denote the $n$-th pre-collision time for the dynamics given by $W_j(t)$. We now prove that for sufficiently large $j$, we have $\tau_{m,j}>0$ and $\tau_{m+1,j}=0$. Moreover, we show that $\tau_{n,j}\to \tau_n$ for $1\leq n\leq m$.
  
  We first treat the case of $m=0$, the case without pre-collisions. We first note that $m=0$ implies
  \begin{equation}
    \xi_1<\frac{X(t)+h}{t}
  \end{equation}
  because otherwise there would be a pre-collision at the plane wall. Note that we also have
  \begin{equation}
    \xi_1<\frac{X_j(t)+h}{t}
  \end{equation}
  for sufficiently large $j$, where $X_j(t)=L+\int_{0}^{t}W_j(s)\, ds$. In order to have $m=0$, we have two possibilities, that is, the characteristic curve $x(s)$ never catches up the cylinder in the $x_1$-direction (i); or it does catch up the cylinder in the $x_1$-direction but escapes in the $x_{\perp}$-direction (ii). Expressed in equations, these are
  (i):
  \begin{equation}
    \xi_1<\inf_{0\leq s<t}\langle W \rangle_{s,t} 
  \end{equation}
  or; (ii):
  \begin{equation}\label{x1precollision}
    \inf_{0\leq s<t}\langle W \rangle_{s,t}<\xi_1
  \end{equation}
  but
  \begin{equation}\label{noprecollisionperp}
    |x_{\perp}-(t-\sigma_1)\xi_{\perp}|>R.
  \end{equation}
  Here $\sigma_1$ is the largest $s\in (0,t)$ satisfying $\xi_1=\langle W \rangle_{s,t}$, which exists by inequality~\eqref{x1precollision}. For the case (i), we have
  \begin{equation}
    \xi_1<\inf_{0\leq s<t}\langle W_j \rangle_{s,t}
  \end{equation}
  for sufficiently large $j$. Hence there also are no pre-collisions for the dynamics given by $W_j(t)$. For the case (ii), we have
  \begin{equation}
    \inf_{0\leq s<t}\langle W_j \rangle_{s,t}<\xi_1
  \end{equation}
  for sufficiently large $j$. Let $\sigma_{1,j}$ be the largest $s\in (0,t)$ satisfying $\xi_1=\langle W_j \rangle_{s,t}$. By the definition of $\sigma_{1,j}$, we have
  \begin{align}
    \xi_1 & <\langle W_j \rangle_{s,t} \quad \text{for $s\in (\sigma_{1,j},t)$}, \\
    \xi_1 & =\langle W_j \rangle_{\sigma_{1,j},t}.
  \end{align}
  Hence for any convergent subsequence $\{ \sigma_{1,j'} \}$ of $\{ \sigma_{1,j} \}$, we have
  \begin{align}
    \xi_1 & <\langle W \rangle_{s,t} \quad \text{for $s\in (\sigma_{1*},t)$}, \\
    \xi_1 & =\langle W \rangle_{\sigma_{1*},t},
  \end{align}
  where $\sigma_{1*}$ is the limit of the subsequence. We excluded the equality in the first inequality because it only happens on a measure theoretically negligible set.\footnote{If $\xi_1=\langle W \rangle_{\tilde{s},t}$ for some $\tilde{s}\in (\sigma_{1*},t)$, it follows that $d\langle W \rangle_{s,t}/ds=0$ at $s=\tilde{s}$. This implies that $\xi_1=\langle W \rangle_{\tilde{s},t}=W(\tilde{s})$.} This shows that $\sigma_{1*}=\sigma_1$ and hence $\sigma_{1,j}\to \sigma_1$. By inequality~\eqref{noprecollisionperp}, we have
  \begin{equation}
    |x_{\perp}-(t-\sigma_{1,j})\xi_{\perp}|>R
  \end{equation}
  for sufficiently large $j$. Thus there also are no pre-collisions for the dynamics given by $W_j(t)$.

  We consider next the case of $m=1$. We first treat the case of $x_1(\tau_1)=0$. In this case, we have
  \begin{equation}
    \xi_1=\frac{X(t)+h}{t-\tau_1}>\langle W \rangle_{\tau_1,t}\geq \inf_{0\leq s<t}\langle W \rangle_{s,t}.
  \end{equation}
  Let $\eta_1$ be the largest $s\in (0,t)$ satisfying $\xi_1=\langle W \rangle_{s,t}$. Note that we have
  \begin{equation}\label{noprecollisionperp2}
    |x_{\perp}-(t-\eta_1)\xi_{\perp}|>R.
  \end{equation}
  Define $\sigma_{1,j}$ by the equation
  \begin{equation}
    t-\sigma_{1,j}=\frac{X_j(t)+h}{\xi_1}.
  \end{equation}
  Assuming that $\tau_1>0$, we have $\sigma_{1,j}>0$ for sufficiently large $j$. Note that $\tau_1=0$ implies $\xi_1=(X(t)+h)/t$ and therefore this case is measure theoretically negligible. Note also that
  \begin{equation}
    \xi_1>\inf_{0\leq s<t}\langle W_j \rangle_{s,t}
  \end{equation}
  for sufficiently large $j$. Let $\eta_{1,j}$ be the largest $s\in (0,t)$ satisfying $\xi_1=\langle W_j \rangle_{s,t}$. A similar argument as in the case of $m=0$ shows that $\eta_{1,j}\to \eta_1$. Hence inequality~\eqref{noprecollisionperp2} implies
  \begin{equation}
    |x_{\perp}-(t-\eta_{1,j})\xi_{\perp}|>R
  \end{equation}
  for sufficiently large $j$. These show that $\tau_{1,j}=\sigma_{1,j}>0$ and that $x_1(\tau_{1,j})=0$. For a simple geometrical reason, there are no pre-collisions after a pre-collision at the plane wall. Hence $\tau_{2,j}=0$. We next treat the case of $x(\tau_{1})\in C_{W}^{+}(\tau_{1})$. In this case, we have
  \begin{equation}
    \inf_{0\leq s<t}\langle W \rangle_{s,t}<\xi_1
  \end{equation}
  and
  \begin{equation}\label{precollisionperp}
    |x_{\perp}-(t-\tau_1)\xi_{\perp}|<R.
  \end{equation}
  Therefore we have
  \begin{equation}
    \inf_{0\leq s<t}\langle W_j \rangle_{s,t}<\xi_1
  \end{equation}
  for sufficiently large $j$. Let $\sigma_{1,j}$ be the largest $s\in (0,t)$ satisfying $\xi_1=\langle W_j \rangle_{s,t}$. A similar argument as in the case of $m=0$ shows that $\sigma_{1,j}\to \tau_{1}$. By inequality~\eqref{precollisionperp}, we have
  \begin{equation}
    |x_{\perp}-(t-\sigma_{1,j})\xi_{\perp}|<R
  \end{equation}
  for sufficiently large $j$. This shows that $\tau_{1,j}=\sigma_{1,j}>0$. Lastly, an argument similar to the case of $m=0$ shows that $\tau_{2,j}=0$ for sufficiently large $j$. Note also that since $\tau_{1,j}\to \tau_{1}$, we have
  \begin{equation}
    2W_j(\tau_{1,j})-\xi_1 \to 2W(\tau_{1})-\xi_1=\xi_{1}'(\tau_{1}).
  \end{equation}
  Therefore we can extend the argument employed here to treat general $m$.

  Let $\xi_{0,j}=\xi_{j}'(\tau_{m,j})$. Here the characteristics $(x_j(s),\xi_j(s))$ are defined using $W_j$. From what we showed above, we have
  \begin{equation}
    \xi_{0,j}\to \xi_0 \quad \text{a.e. $(x,\xi)\in I_{W}^{+}(t)$}.
  \end{equation}
  Hence by the Lebesgue convergence theorem, we conclude that
  \begin{equation}
    r_{W_j}^{+}(t)\to r_{W}^{+}(t)
  \end{equation}
  for all $t\geq 0$.
\end{proof}

\begin{remark}\label{remark:cont}
  A similar argument as in the proof above shows that $r_{W}^{\pm}(t)$ are continuous in $t$. Therefore $V(t)$ is necessarily continuously differentiable if $(X(t),V(t))$ is a solution to the problem.
\end{remark}

\begin{proposition}
  Let $\{ W_j \}_{j=1}^{\infty}\subset \mathscr{K}$, $W\in \mathscr{K}$ and $W_j \to W$ in $C_b([0,\infty))$. Then we have $V_{W_j}\to V_W$ in $C_b([0,\infty))$.
\end{proposition}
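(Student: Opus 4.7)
The plan is to combine Duhamel's representation of $V_W$ with Proposition~\ref{mapcont} and the compactness of $\mathscr{K}$. Solving the linear equation~\eqref{map} explicitly,
\begin{equation}
V_\infty - V_W(t) = \gamma e^{-\int_0^t K_W(\sigma)\,d\sigma} + \int_0^t e^{-\int_s^t K_W(\sigma)\,d\sigma}\bigl[r_W^+(s) + r_W^-(s)\bigr]\,ds,
\end{equation}
and likewise for each $V_{W_j}$. The strategy is to first obtain pointwise convergence $V_{W_j}(t)\to V_W(t)$ from this formula and Proposition~\ref{mapcont}, and then upgrade it to convergence in $C_b([0,\infty))$ via the compactness of $\mathscr{K}$.

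First I would establish the pointwise convergence. Using the mean-value representation
\begin{equation}
K_W(t) = \int_0^1 D_0'\bigl(W(t) + \theta(V_\infty - W(t))\bigr)\,d\theta,
\end{equation}
the uniform continuity of $D_0'$ on the compact interval $[V_\infty/2, V_\infty]$, together with $W_j \to W$ in $C_b([0,\infty))$, yields $K_{W_j}(t)\to K_W(t)$ pointwise. Since $C_+\leq K_{W_j}(\sigma)\leq C_-$ by inequality~\eqref{KWestimate}, dominated convergence gives convergence of $\int_s^t K_{W_j}(\sigma)\,d\sigma$ and hence of both exponential weights in the Duhamel formula. Combining this with the pointwise convergence $r_{W_j}^{\pm}(s)\to r_W^{\pm}(s)$ from Proposition~\ref{mapcont} and the uniform-in-$j$ upper bound on $r_{W_j}^{\pm}$ provided by Propositions~\ref{r+upperbound} and~\ref{r-upperbound} (which is trivially integrable on the finite interval $[0,t]$), a second application of dominated convergence inside the Duhamel integral gives $V_{W_j}(t)\to V_W(t)$ for each fixed $t\geq 0$.

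The remaining task is to upgrade this to convergence in $C_b([0,\infty))$. I would argue by contradiction: if $V_{W_j}\not\to V_W$ uniformly, then there exist $\varepsilon>0$ and a subsequence with $\|V_{W_{j_k}} - V_W\|_\infty \geq \varepsilon$. Since $\{V_{W_j}\}\subset \mathscr{K}$ and $\mathscr{K}$ is compact in $C_b([0,\infty))$ (as established just before this proposition), a further subsequence converges uniformly to some $V^*\in \mathscr{K}$. Uniform convergence implies pointwise convergence, so the uniqueness of pointwise limits forces $V^* = V_W$, contradicting the bound $\|V_{W_{j_k}} - V_W\|_\infty \geq \varepsilon$. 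Hence $V_{W_j}\to V_W$ in $C_b([0,\infty))$.

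The main obstacle is precisely the non-compactness of the time domain $[0,\infty)$, which prevents a direct application of Arzel\`a--Ascoli to pass from pointwise to uniform convergence. This difficulty is circumvented by the observation that $\mathscr{K}$ was \emph{designed} to be compact in $C_b([0,\infty))$ by incorporating the uniform decay estimate~\eqref{upperbound} into its definition, so the subsequence argument reduces the uniform convergence problem to the (already established) pointwise one.
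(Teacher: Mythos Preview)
Your proof is correct but follows a genuinely different route from the paper's. The paper argues uniform convergence \emph{directly}: starting from the Duhamel formula, it isolates the typical term
\[
\int_{0}^{t}e^{-\int_{s}^{t}K_W(\sigma)\, d\sigma}\left| r_{W_j}^{+}(s)-r_{W}^{+}(s) \right| \, ds
\]
and shows it tends to $0$ uniformly in $t$ by a tail-splitting argument. Given $\varepsilon>0$, one first chooses $T$ so that $\int_{T}^{t}e^{-C_{+}(t-s)}\bigl[r_{W_j}^{+}(s)+r_{W}^{+}(s)\bigr]\,ds<\varepsilon/2$ for all $t\geq T$ (using the uniform-in-$j$ decay of $r_{W_j}^{+}$ from Proposition~\ref{r+upperbound}), and then handles the finite interval $[0,T]$ via Proposition~\ref{mapcont} and dominated convergence. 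The remaining terms in the difference $V_W-V_{W_j}$ are treated similarly.

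By contrast, you only establish pointwise convergence $V_{W_j}(t)\to V_W(t)$ and then invoke the compactness of $\mathscr{K}$ together with a subsequence argument to upgrade to uniform convergence. This is a perfectly valid ``soft'' alternative: since the compactness of $\mathscr{K}$ has already been established (and is needed anyway for Schauder's theorem), your route is arguably more economical and avoids the explicit $\varepsilon$--$T$ splitting. The paper's direct argument, on the other hand, is more quantitative and does not logically rely on the compactness of $\mathscr{K}$; it would survive even if one only knew equicontinuity and uniform decay of the $r_W^{\pm}$, without having packaged these into a compact set.
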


\begin{proof}
  Duhamel's formula implies that
\begin{align}
  V_W(t)-V_{W_j}(t)
  & =\gamma \left[ e^{-\int_{0}^{t}K_{W_j}(\sigma)\, d\sigma}-e^{-\int_{0}^{t}K_{W}(\sigma)\, d\sigma} \right] \\
  & \qquad +\int_{0}^{t}e^{-\int_{s}^{t}K_{W_j}(\sigma)\, d\sigma}\left[ r_{W_j}^{+}(s)+r_{W_j}^{-}(s) \right] \, ds \\
  & \qquad -\int_{0}^{t}e^{-\int_{s}^{t}K_{W}(\sigma)\, d\sigma}\left[ r_{W}^{+}(s)+r_{W}^{-}(s) \right] \, ds.
\end{align}
We show here that
\begin{equation}\label{mapcont_estimate}
  \int_{0}^{t}e^{-\int_{s}^{t}K_W(\sigma)\, d\sigma}\left| r_{W_j}^{+}(s)-r_{W}^{+}(s) \right| \, ds \to 0
\end{equation}
as $j\to \infty$ uniformly in $t\geq 0$. All other terms are similarly treated or are easier to treat. Now for any $\varepsilon>0$, take $T>0$ such that for all $t\geq T$
\begin{equation}
  \int_{T}^{t}e^{-C_{+}(t-s)}\left[ r_{W_j}^{+}(s)+r_{W}^{+}(s) \right] \, ds<\varepsilon/2.
\end{equation}
This is possible because by Proposition~\ref{r+upperbound}, $r_{W_j}^{+}(t)$ and $r_{W}^{+}(t)$ decay as $t\to \infty$ uniformly in $j$. Now by the Lebesgue convergence theorem and Proposition~\ref{mapcont}, there exists $N=N(T)\in \mathbb{N}$ such that for all $j\geq N$
\begin{equation}
  \int_{0}^{T}\left| r_{W_j}^{+}(s)-r_{W}^{+}(s) \right| \, ds<\varepsilon/2.
\end{equation}
Hence if $j\geq N$, we have
\begin{equation}
  \int_{0}^{t}e^{-\int_{s}^{t}K_W(\sigma)\, d\sigma}\left| r_{W_j}^{+}(s)-r_{W}^{+}(s) \right| \, ds<\varepsilon/2
\end{equation}
for $t\leq T$. This proves~\eqref{mapcont_estimate} and $V_{W_j}\to V_W$ in $C_b([0,\infty))$ follows.
\end{proof}

Now applying Schauder's fixed point theorem shows that there exists a fixed point $V\in \mathscr{K}$ for the map $W\mapsto V_W$. This proves the first part of Theorem~\ref{theorem1}.

\subsection{Proof of the Second Part of Theorem~\ref{theorem1}}
Let $(X(t),V(t))$ be any solution to the problem. Define $T$ by
\begin{equation}
  T=\inf \left\{ t\geq 0 \mathrel{\Bigg|} V_{\infty}-V(t)\geq \gamma e^{-C_{+}t}+\gamma^3 \frac{A_+}{(1+t)^{d+2}}+L^{-(d-1)}\frac{B_+}{(1+t/L)^{d-1}} \right\}.
\end{equation}
Here we use the convention that the infimum of the empty set equals $+\infty$. It is obvious that $T>0$. Suppose that $T<+\infty$. By the definition of $T$, we have
\begin{equation}\label{ineqT}
  V_{\infty}-V(t)\leq \gamma e^{-C_{+}t}+\gamma^3 \frac{A_+}{(1+t)^{d+2}}+L^{-(d-1)}\frac{B_+}{(1+t/L)^{d-1}}
\end{equation}
for $t\leq T$ and
\begin{equation}
  V_{\infty}-V(T)=\gamma e^{-C_{+}T}+\gamma^3 \frac{A_+}{(1+T)^{d+2}}+L^{-(d-1)}\frac{B_+}{(1+T/L)^{d-1}}.
\end{equation}
By inequality~\eqref{ineqT}, taking $\gamma_0$ small enough and $L_0$ large enough, we have $V(t)\geq V_{\infty}/2$ for all $\gamma \in (0,\gamma_0]$, $L\in [L_0,\infty)$ and $t\leq T$. Using this, we can prove as in the proof of Propositions~\ref{r+nonnegative} and~\ref{r-nonnegative} that $r_{V}^{\pm}(t)\geq 0$ for $t\leq T$. Hence from the equation
\begin{equation}
  V_{\infty}-V(t)=\gamma e^{-\int_{0}^{t}K_V(\sigma)\, d\sigma}+\int_{0}^{t}e^{-\int_{s}^{t}K_V(\sigma)\, d\sigma}\left[ r_{V}^{+}(s)+r_{V}^{-}(s) \right] \, ds,
\end{equation}
it follows that $V(t)<V_{\infty}$ for $t\leq T$. Since $V_{\infty}/2\leq V(t)<V_{\infty}$, we have $C_{+}\leq K_V(\sigma)\leq C_{-}$. Hence inequality~\eqref{estimate2} holds for $t\leq T$. Now Proposition~\ref{VWOmega}\footnote{More precisely, we use a version of Proposition~\ref{VWOmega} where all the inequalities are modified to hold up to $t=T$.} shows that
\begin{equation}
  V_{\infty}-V(T)<\gamma e^{-C_{+}T}+\gamma^3 \frac{A_+}{(1+T)^{d+2}}+L^{-(d-1)}\frac{B_+}{(1+T/L)^{d-1}},
\end{equation}
which is a contradiction. Therefore $T=+\infty$. Hence inequality~\eqref{ineqT} holds for $t\geq 0$. Taking $\gamma_0$ small enough and $L_0$ large enough if necessary, it follows that $V(t)\geq V_{\infty}/2$ for $t\geq 0$. This shows that $r_{V}^{\pm}(t)\geq 0$ and $V(t)<V_{\infty}$ for $t\geq 0$. As a consequence, inequality~\eqref{estimate2} holds for $t\geq 0$. This concludes the proof of Theorem~\ref{theorem1}.

\subsection{Lower Bound for $r_{W}^{-}(t)$}\label{sec:r-lowerbound}
We prove here a lower bound for $r_{W}^{-}(t)$.

\begin{proposition}\label{r-lowerbound}
  Let $W\in \Omega(\gamma,L,A_*,B_*,V_{\infty})$. Then we have
  \begin{equation}
    r_{W}^{-}(t)\geq C\bm{1}_{\{ t>L/2 \}}\frac{L^{-(d-1)}}{(1+t/L)^{d-1}}.
  \end{equation}
\end{proposition}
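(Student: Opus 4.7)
The plan is to exhibit an explicit subset $\mathcal{A}\subset I_W^-(t)$ (valid for $t>L/2$) on which the integrand defining $r_W^-(t)$ is bounded below by a positive constant, and whose $d\xi\,dS$-measure is of order $L^{-(d-1)}(1+t/L)^{-(d-1)}$.

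First I would fix a bounded interval for $\xi_1$ and define $\mathcal{A}$ to be the set of $(x,\xi)\in I_W^-(t)$ with $|x_\perp|\leq R/2$, $\xi_1\in[a,a+1]$ where $a:=3V_\infty/2+4$, and $|\xi_\perp|\leq R(\xi_1+V_\infty/2)/[4(L+V_\infty t)]$. For $t>L/2$ we have $2L/t<4$, so condition~(iii) of Lemma~\ref{tau2wsufficient} holds; conditions~(i) and~(ii) are built into the definition of $\mathcal{A}$; and $\xi_1\geq a>V_\infty>W(t)$ ensures that $(x,\xi)\in I_W^-(t)$ and that the lemma applies. Hence $\tau_2>0$ for every $(x,\xi)\in\mathcal{A}$, with the pre-collision pattern $x_1(\tau_1)=0$ and $x(\tau_2)\in C_W^-(\tau_2)$.

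Next I would evaluate $\xi_0$. The wall reflection at $\tau_1$ gives $\xi_1(\tau_2)=-\xi_1$, and the subsequent cylinder reflection at $\tau_2$ gives $\xi_1'(\tau_2)=2W(\tau_2)+\xi_1\geq V_\infty+\xi_1$. By the monotonicity argument used in Proposition~\ref{r-nonnegative}, any further pre-collisions can only enlarge $|\xi_{01}|$ (wall bounces preserve $|\xi_1|$, and cylinder bounces after a wall bounce strictly enlarge it), so
\begin{equation}
|\xi_0|^2-|\xi|^2=|\xi_{01}|^2-\xi_1^2\geq(V_\infty+\xi_1)^2-\xi_1^2\geq V_\infty^2.
\end{equation}
This yields $e^{-\beta_0|\xi|^2}-e^{-\beta_0|\xi_0|^2}\geq(1-e^{-\beta_0 V_\infty^2})e^{-\beta_0|\xi|^2}$; since $|\xi|$ is bounded on $\mathcal{A}$, $e^{-\beta_0|\xi|^2}$ has a positive lower bound, as does $(\xi_1-W(t))^2\geq(V_\infty/2+4)^2$.

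Finally, I would estimate the measure of $\mathcal{A}$: integration over $|x_\perp|\leq R/2$ contributes a positive constant, the $\xi_1$-interval has length one, and the ball of radius comparable to $1/(L+V_\infty t)$ in $\xi_\perp$ contributes a factor of order $L^{-(d-1)}(1+t/L)^{-(d-1)}$. Multiplying these bounds gives the claim. I do not foresee a real obstacle, since Lemma~\ref{tau2wsufficient} and Proposition~\ref{r-nonnegative} already perform the hard geometric work; the only care needed is to choose $\xi_1$ in a compact set lying strictly to the right of $3V_\infty/2+2L/t$ uniformly for $t>L/2$, and to verify that further pre-collisions cannot undo the jump in $|\xi_{01}|$ gained at $\tau_2$.
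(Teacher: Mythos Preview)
Your proposal is correct and follows essentially the same approach as the paper: both invoke Lemma~\ref{tau2wsufficient} on the region $|x_\perp|\leq R/2$, $|\xi_\perp|\leq R(\xi_1+V_\infty/2)/[4(L+V_\infty t)]$, $\xi_1\geq \tfrac{3}{2}V_\infty+4$ (using $2L/t<4$ for $t>L/2$), then bound $|\xi_{01}|\geq V_\infty+\xi_1$ via the monotonicity underlying Proposition~\ref{r-nonnegative}, and read off the factor $L^{-(d-1)}(1+t/L)^{-(d-1)}$ from the $\xi_\perp$-ball volume. The only cosmetic differences are that the paper integrates over the semi-infinite range $\xi_1\in[\tfrac{3}{2}V_\infty+4,\infty)$ (using Gaussian decay to control the resulting integral) and bounds the exponential gap by the mean-value inequality $e^{-\beta_0\xi_1^2}-e^{-\beta_0\xi_{01}^2}\geq \beta_0 V_\infty(V_\infty+2\xi_1)e^{-\beta_0(V_\infty+\xi_1)^2}$, whereas you restrict $\xi_1$ to a compact interval and use $e^{-\beta_0|\xi|^2}(1-e^{-\beta_0 V_\infty^2})$ instead; either variant works.
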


\begin{proof}
  Let $(x,\xi)\in I_{W}^{-}(t)$ and suppose that conditions (i), (ii) and (iii) in Lemma~\ref{tau2wsufficient} are satisfied. Then $\tau_2>0$ by Lemma~\ref{tau2wsufficient}. Since
  \begin{equation}
    |\xi_{01}|\geq |\xi_{1}'(\tau_2)|=|2W(\tau_2)+\xi_1|\geq V_{\infty}+\xi_1,
  \end{equation}
   we have
  \begin{align}
    e^{-\beta_0 \xi_{1}^{2}}-e^{-\beta_0 \xi_{01}^{2}}
    & \geq e^{-\beta_0 \xi_{1}^{2}}-e^{-\beta_0(V_{\infty}+\xi_1)^2} \\
    & \geq \beta_0 e^{-\beta_0 (V_{\infty}+\xi_1)^2}\left[ (V_{\infty}+\xi_1)^2 -\xi_{1}^{2} \right] \\
    & =\beta_0 V_{\infty}(V_{\infty}+2\xi_1)e^{-\beta_0 (V_{\infty}+\xi_1)^2}.
  \end{align}
  Hence for $t>L/2$, we have
  \begin{align}
    r_{W}^{-}(t)
    & \geq C\int_{\frac{3}{2}V_{\infty}+\frac{2L}{t}}^{\infty}(V_{\infty}+2\xi_1)(\xi_1-W(t))^2 e^{-\beta_0 (V_{\infty}+\xi_1)^2}\, d\xi_1 \\
    & \qquad \qquad \times \int_{|\xi_{\perp}|\leq \frac{R(\xi_1+V_{\infty}/2)}{4(L+V_{\infty}t)}}e^{-\beta_0 |\xi_{\perp}|^{2}}\, d\xi_{\perp} \\
    & \geq C\frac{L^{-(d-1)}}{(1+t/L)^{d-1}}\int_{\frac{3}{2}V_{\infty}+4}^{\infty}(V_{\infty}+2\xi_1)(\xi_1-V_{\infty}/2)^2(\xi_1+V_{\infty}/2)^{d-1} \\
    & \quad \qquad \qquad \qquad \qquad \qquad \times e^{-\beta_0 (V_{\infty}+\xi_1)^2}e^{-\beta_0 R^2 (\xi_1+V_{\infty}/2)^2}\, d\xi_1 \\
    & \geq C\frac{L^{-(d-1)}}{(1+t/L)^{d-1}}.
  \end{align}
\end{proof}

\subsection{Proof of Theorem~\ref{theorem2}}\label{sec:proof2}
Let $(X(t),V(t))$ be any solution to the problem. By Theorem~\ref{theorem1}, we see that $V\in \Omega(\gamma,L,A_+,B_+,V_{\infty})$. Hence by Proposition~\ref{r-lowerbound}, we have for $t>L$
\begin{align}
  V_{\infty}-V(t)
  & \geq \gamma e^{-C_{-}t}+\int_{0}^{t}e^{-C_{-}(t-s)}r_{V}^{-}(s)\, ds \\
  & \geq \gamma e^{-C_{-}t}+CL^{-(d-1)}\int_{L/2}^{t}\frac{e^{-C_{-}(t-s)}}{(1+s/L)^{d-1}}\, ds \\
  & \geq \gamma e^{-C_{-}t}+C\frac{L^{-(d-1)}}{(1+t/L)^{d-1}}\frac{1-e^{-C_{-}L/2}}{C_-} \\
  & \geq \gamma e^{-C_{-}t}+\frac{B_-}{t^{d-1}},
\end{align}
where $B_{-}=B_{-}(\beta_0,p_0,R,V_{\infty},d)$ is a positive constant.
This concludes the proof of Theorem~\ref{theorem2}.

\section{Appendix}\label{appendix}
We prove here several assertions made in Remarks~\ref{remark2} and \ref{remark3}. In this appendix, we consider $L=L_{\gamma}$ to be a function of $\gamma$ and write $L^{-(d-1)}\ll \gamma$ to mean $L_{\gamma}^{-(d-1)}=o(\gamma)$ as $\gamma \to +0$. In the following, $(X(t),V(t))$ is a solution to the problem, and $K_V(t)$ is defined by equation~\eqref{KW} using $V$ in place of $W$.

\subsection{Proof of $V(t)>V_0$}\label{appendix1}
We first prove that if $L^{-(d-1)}\ll \gamma$, then $V(t)>V_0$ for $t>0$. First, by equations~\eqref{EOM}, \eqref{terminal} and $D_V(t)=D_0(V(t))+r_{V}^{+}(t)+r_{V}^{-}(t)$, we have
\begin{align}\label{eq:appendix1}
  \begin{aligned}
    \frac{d}{dt}[V(t)-V_0]
    & =K_V(t)[V_{\infty}-V(t)]-r_{V}^{+}(t)-r_{V}^{-}(t) \\
    & =-K_V(t)[V(t)-V_0]+\gamma K_V(t)-r_{V}^{+}(t)-r_{V}^{-}(t).
  \end{aligned}
\end{align}
Now $V\in \Omega(\gamma,L,A_+,B_+,V_{\infty})$ by Theorem~\ref{theorem1}, and we can use Propositions~\ref{r+upperbound} and \ref{r-upperbound} to obtain upper bounds for $r_{V}^{+}(t)$ and $r_{V}^{-}(t)$. These and $K_V(t)\geq C_+$ show that
\begin{equation}
  \gamma K_V(t)-r_{V}^{+}(t)-r_{V}^{-}(t) \geq \gamma C_+ -C\left[ (\gamma+\gamma^3 A_+)^3 +L^{-3(d-1)}B_{+}^{3}+L^{-(d-1)} \right] >0
\end{equation}
for $\gamma$ sufficiently small. Note that we used $L^{-(d-1)}\ll \gamma$ here. Equation~\eqref{eq:appendix1} now implies that
\begin{equation}
  \frac{d}{dt}[V(t)-V_0]>-K_V(t)[V(t)-V_0],
\end{equation}
and $V(t)>V_0$ for $t>0$ follows from this.

\subsection{Proof that $V(t)$ is increasing on $[0,t_0]$}\label{appendix2}
As we wrote in Remark~\ref{remark2}, $V(t)$ is increasing on a time interval $[0,t_0]$, where
\begin{equation}\label{t0}
  t_0=\frac{1}{2C_-}\log \frac{C_+ \gamma}{\hat{C}\left[ \gamma^3+L^{-(d-1)} \right]}
\end{equation}
and $\hat{C}=\hat{C}(\beta_0,p_0,R,V_{\infty},d)$ is a positive constant. We assume that $L^{-(d-1)}\ll \gamma$. This guarantees $t_0$ to be positive, if $\gamma$ is sufficiently small. Moreover, we have $t_0 \to \infty$ as $\gamma \to +0$.

Now we prove this. First, Propositions~\ref{r+upperbound} and \ref{r-upperbound} imply
\begin{equation}
  0\leq r_{V}^{+}(t)+r_{V}^{-}(t)\leq \hat{C}\left[ \gamma^3 +L^{-(d-1)} \right]
\end{equation}
for some positive constant $\hat{C}=\hat{C}(\beta_0,p_0,R,V_{\infty},d)$. This and inequality~\eqref{estimate2} show that
\begin{align}
  \begin{aligned}
    \frac{d}{dt}[V_{\infty}-V(t)]
    & =-K_V(t)[V_{\infty}-V(t)]+r_{V}^{+}(t)+r_{V}^{-}(t) \\
    & \leq -C_+ \gamma e^{C_- t}+\hat{C}\left[ \gamma^3 +L^{-(d-1)} \right] \\
    & <0
  \end{aligned}
\end{align}
for $t\in [0,t_0]$. Hence $V(t)$ is increasing on $[0,t_0]$.

\subsection{Proof of a Refined Lower Bound of $V_{\infty}-V(t)$}\label{appendix3}
Finally, we give a proof of the refined lower bound~\eqref{estimate3_refined} of $V_{\infty}-V(t)$, under an additional assumption that $L^{-(d-1)}\ll \gamma$. As in~\cite{CMP1}, define $s_0$ by
\begin{equation}\label{s0def}
  s_0=s_0(t)=\min \left\{ s\in (0,t) \mathrel{\Big|} V(s)\geq \frac{V_0+\langle V \rangle_{s,t}}{2} \right\}.
\end{equation}
We prove below a lower bound
\begin{equation}\label{r+lowerbound}
  r_{V}^{+}(t)\geq C\bm{1}_{\{ t_1>t>\bar{t}/2 \}}\frac{\gamma^4}{t^{d+2}}.
\end{equation}
Here $\bar{t}=\bar{t}(\beta_0,p_0,R,V_{\infty},d)$ is a positive constant and $t_1$ is given by
\begin{equation}\label{t1}
  t_1=\bar{c}\gamma L^{d-1},
\end{equation}
where $\bar{c}=\bar{c}(\beta_0,p_0,R,V_{\infty},d)$ is a positive constant. To obtain this, we first prove the following: If we take $\bar{t}$ sufficiently large and $\bar{c}>0$ sufficiently small, (i)--(v) below hold.
\begin{enumerate}[label=(\roman*)]
  \item Bounds of $s_0$:
    \begin{equation}
      \frac{1}{C_-}\log \frac{3}{2}\leq s_0 \leq \frac{1}{C_+}\log 4
    \end{equation}
    for $t\geq \bar{t}/2$.
  \item A bound of $\langle V \rangle_{s,t}-V_0$:
    \begin{equation}
      \langle V \rangle_{s,t}-V_0 \geq \frac{2}{3}\gamma
    \end{equation}
    for $t\geq \bar{t}/2$ and $s\leq s_0$.
  \item A bound of $\langle V \rangle_{s_0,t}-\langle V \rangle_{0,t}$:
    \begin{equation}
      \langle V \rangle_{s_0,t}-\langle V \rangle_{0,t}\geq C\frac{\gamma}{t}
    \end{equation}
    for $t\geq \bar{t}/2$.
  \item A bound of $V(t)-\langle V \rangle_{s,t}$:
    \begin{equation}
      V(t)-\langle V \rangle_{s,t}\geq C\frac{\gamma}{t}
    \end{equation}
    for $t_1>t\geq \bar{t}/2$ and $s\leq s_0$, where $t_1$ is defined by equation~\eqref{t1}.
  \item Let $t_1>t\geq \bar{t}/2$ and $(x,\xi)\in I_{V}^{+}(t)$. Suppose that
    \begin{equation}
      \langle V \rangle_{0,t}<\xi_1<\langle V \rangle_{s_0,t},
    \end{equation}
    and let $\sigma_1$ be the largest $s\in (0,t)$ satisfying $\xi_1=\langle V \rangle_{s,t}$. Then we have $0<\sigma_1<s_0$. Moreover, if
    \begin{equation}
      |x_{\perp}-(t-\sigma_1)\xi_{\perp}|<R,
    \end{equation}
    then $\tau_1=\sigma_1$ and $\tau_2=0$, that is, $(x,\xi)$ produces exactly one pre-collision.
\end{enumerate}

We begin from the proof of (i). First, note that since $V(t)>V_0$ for $t>0$ (Section~\ref{appendix1}), we have
\begin{equation}
  V(0)=V_0<\frac{V_0+\langle V \rangle_{0,t}}{2}
\end{equation}
and
\begin{equation}
  V(t)>\frac{V_0+V(t)}{2}
\end{equation}
for $t>0$. These show that $s_0$ is well-defined. By definition~\eqref{s0def}, we have
\begin{equation}
  V(s_0)=\frac{V_0+\langle V \rangle_{s_0,t}}{2}.
\end{equation}
Use this to rewrite $V_{\infty}-V(s_0)$ as
\begin{equation}\label{eq:appendix2}
  V_{\infty}-V(s_0)=V_{\infty}-\frac{V_0+\langle V \rangle_{s_0,t}}{2}=\frac{\gamma}{2}+\frac{V_{\infty}-\langle V \rangle_{s_0,t}}{2}.
\end{equation}
On the other hand, by inequality~\eqref{estimate1}, we have
\begin{align}\label{eq:appendix3}
  \begin{aligned}
    V_{\infty}-V(s_0)
    & \leq \gamma e^{-C_+ s_0}+\gamma^3 \frac{A_+}{(1+s_0)^{d+2}}+L^{-(d-1)}\frac{B_+}{(1+s_0/L)^{d-1}} \\
    & \leq \gamma e^{-C_+ s_0}+\frac{\gamma}{4}    
  \end{aligned}
\end{align}
for $\gamma$ sufficiently small. We used $L^{-(d-1)}\ll \gamma$ here. Equality~\eqref{eq:appendix2}, inequality~\eqref{eq:appendix3} and $V(t)<V_{\infty}$ imply
\begin{equation}
  \frac{\gamma}{2}<\frac{\gamma}{2}+\frac{V_{\infty}-\langle V \rangle_{s_0,t}}{2}=V_{\infty}-V(s_0)\leq \gamma e^{-C_+ s_0}+\frac{\gamma}{4}.
\end{equation}
This gives an upper bound of $s_0$:
\begin{equation}\label{s0upperbound}
  s_0 \leq \frac{1}{C_+}\log 4.
\end{equation}
Next, by inequality~\eqref{estimate1}, we have
\begin{align}
  \begin{aligned}
    V_{\infty}-\langle V \rangle_{s,t}
    & =\frac{1}{t-s}\int_{s}^{t}(V_{\infty}-V(\sigma))\, d\sigma \\
    & \leq \frac{1}{t-s}\int_{s}^{t}\left[ \gamma e^{-C_+ \sigma}+\gamma^3 \frac{A_+}{(1+\sigma)^{d+2}}+L^{-(d-1)}\frac{B_+}{(1+\sigma/L)^{d-1}} \right] \, d\sigma \\
    & \leq \frac{C}{t-s}(\gamma +\gamma^3 A_+)+L^{-(d-1)}B_+.
  \end{aligned}
\end{align}
Taking $\gamma$ sufficiently small and $\bar{t}=\bar{t}(\beta_0,p_0,R,V_{\infty},d)$ sufficiently large, we have
\begin{equation}\label{eq:appendix4}
  V_{\infty}-\langle V \rangle_{s,t}\leq \frac{\gamma}{3}
\end{equation}
for $t\geq \bar{t}/2$ and $s\leq s_0$. Note that we used inequality~\eqref{s0upperbound} and $L^{-(d-1)}\ll \gamma$ here. Now inequality~\eqref{estimate2}, equality~\eqref{eq:appendix2} and inequality~\eqref{eq:appendix4} imply
\begin{equation}
  \gamma e^{-C_- s_0}\leq V_{\infty}-V(s_0)=\frac{\gamma}{2}+\frac{V_{\infty}-\langle V \rangle_{s_0,t}}{2}\leq \frac{2}{3}\gamma,
\end{equation}
which gives an lower bound of $s_0$:
\begin{equation}
  \frac{1}{C_-}\log \frac{3}{2}\leq s_0
\end{equation}
for $t\geq \bar{t}/2$.

Next, we prove (ii). Note that
\begin{equation}
  \langle V \rangle_{s,t}-V_0=\gamma -[V_{\infty}-\langle V \rangle_{s,t}].
\end{equation}
This and inequality~\eqref{eq:appendix4} imply
\begin{equation}
  \langle V \rangle_{s,t}-V_0 \geq \frac{2}{3}\gamma
\end{equation}
for $t\geq \bar{t}/2$ and $s\leq s_0$. This proves (ii).

For the proof of (iii), note that
\begin{align}\label{eq:appendix5}
  \begin{aligned}
    \langle V \rangle_{s_0,t}-\langle V \rangle_{0,t}
    & =\frac{1}{t-s_0}\int_{s_0}^{t}V(s)\, ds-\frac{1}{t}\int_{0}^{t}V(s)\, ds \\
    & =\left( \frac{1}{t-s_0}-\frac{1}{t} \right) \int_{0}^{t}V(s)\, ds-\frac{1}{t-s_0}\int_{0}^{s_0}V(s)\, ds \\
    & =\frac{s_0}{t-s_0}\left[ (V_{\infty}-\langle V \rangle_{0,s_0})-(V_{\infty}-\langle V \rangle_{0,t}) \right].
  \end{aligned}
\end{align}
By definition~\eqref{s0def} and $V(t)<V_{\infty}$, we have
\begin{equation}
  V(s)\leq \frac{V_0+\langle V \rangle_{s,t}}{2}<\frac{V_0+V_{\infty}}{2}
\end{equation}
for $s\leq s_0$. This implies
\begin{equation}\label{eq:appendix6}
  V_{\infty}-\langle V \rangle_{0,s_0}\geq \frac{\gamma}{2}.
\end{equation}
On the other hand, we have by inequality~\eqref{estimate1}
\begin{align}\label{eq:appendix7}
  \begin{aligned}
    V_{\infty}-\langle V \rangle_{0,t}
    & \leq \frac{1}{t}\int_{0}^{t}\left[ \gamma e^{-C_+ s}+\gamma ^3 \frac{A_+}{(1+s)^{d+2}}+L^{-(d-1)}\frac{B_+}{(1+s/L)^{d-1}} \right] \, ds\\
    & \leq \frac{C}{t}(\gamma +\gamma^3 A_+)+L^{-(d-1)}B_+ \\
    & \leq \frac{\gamma}{4}
  \end{aligned}
\end{align}
for $\gamma$ sufficiently small and $t\geq \bar{t}/2$ with $\bar{t}=\bar{t}(\beta_0,p_0,R,V_{\infty},d)$ sufficiently large. We used $L^{-(d-1)}\ll \gamma$ here. Now equation~\eqref{eq:appendix5}, inequalities~\eqref{eq:appendix6}, \eqref{eq:appendix7} and (i) imply
\begin{equation}
  \langle V \rangle_{s_0,t}-\langle V \rangle_{0,t}\geq C\frac{\gamma}{t}
\end{equation}
for $t\geq \bar{t}/2$. This proves (iii).

Next, we give a proof of (iv). By inequalities~\eqref{estimate1} and \eqref{estimate2}, taking $\bar{t}=\bar{t}(\beta_0,p_0,R,V_{\infty},d)$ sufficiently large, we have
\begin{align}
  \begin{aligned}
    V(t)-\langle V \rangle_{s,t}
    & \geq \frac{1}{t-s}\int_{s}^{t}\gamma e^{-C_- \sigma}\, d\sigma \\
    & \qquad -\gamma e^{-C_+ t}-\gamma^3 \frac{A_+}{(1+t)^{d+2}}-L^{-(d-1)}\frac{B_+}{(1+t/L)^{d-1}} \\
    & \geq \frac{1}{t-s_0}\int_{s_0}^{t}\gamma e^{-C_- \sigma}\, d\sigma \\
    & \qquad -\gamma e^{-C_+ t}-\gamma^3 \frac{A_+}{(1+t)^{d+2}}-L^{-(d-1)}\frac{B_+}{(1+t/L)^{d-1}} \\
    & \geq 2C_* \frac{\gamma}{t}-L^{-(d-1)}B_+
  \end{aligned}
\end{align}
for $t\geq \bar{t}/2$ and $s\leq s_0$, where $C_*=C_*(\beta_0,p_0,R,V_{\infty},d)$ is a positive constant. Now taking $\bar{c}=\bar{c}(\beta_0,p_0,R,V_{\infty},d)>0$ sufficiently small, we have
\begin{equation}
  V(t)-\langle V \rangle_{s,t}\geq 2C_* \frac{\gamma}{t}-L^{-(d-1)}B_+ \geq C_* \frac{\gamma}{t}
\end{equation}
for $t_1>t\geq \bar{t}/2$, where $t_1$ is defined by~\eqref{t1}. This proves (iv).

Finally, we prove (v). First, we show that
\begin{equation}\label{eq:appendix8}
  \langle V \rangle_{s_0,t}<\langle V \rangle_{s,t}
\end{equation}
for $t_1>t>s\geq \bar{t}/2$ with $\bar{t}=\bar{t}(\beta_0,p_0,R,V_{\infty},d)$ sufficiently large. To show this, note that by inequality~\eqref{estimate2}, we have for $t\geq \bar{t}/2$ with $\bar{t}$ sufficiently large
\begin{align}\label{eq:appendix9}
  \begin{aligned}
    \langle V \rangle_{s_0,t}
    & =V_{\infty}-\frac{1}{t-s_0}\int_{s_0}^{t}(V_{\infty}-V(s))\, ds \\
    & \leq V_{\infty}-\frac{1}{t-s_0}\int_{s_0}^{t}\gamma e^{-C_- s}\, ds \\
    & \leq V_{\infty}-2C_*\frac{\gamma}{t},
  \end{aligned}
\end{align}
where $C_*=C_*(\beta_0,p_0,R,V_{\infty},d)$ is a positive constant. On the other hand, we have by inequality~\eqref{estimate1}
\begin{align}
  \begin{aligned}
    \langle V \rangle_{s,t}
    & =V_{\infty}-\frac{1}{t-s}\int_{s}^{t}(V_{\infty}-V(\sigma))\, d\sigma \\
    & \geq V_{\infty}-\frac{1}{t-s}\int_{s}^{t}\left[ \gamma e^{-C_+ \sigma}+\gamma^3 \frac{A_+}{(1+\sigma)^{d+2}} \right. \\
    & \phantom{\geq V_{\infty}-\frac{1}{t-s}\int_{s}^{t}\gamma e^{-C+ \sigma}}\quad +\left. L^{-(d-1)}\frac{B_+}{(1+\sigma/L)^{d-1}} \right] \, d\sigma \\
    & \geq V_{\infty}-\gamma e^{-C_+ s}\frac{1-e^{-C_+(t-s)}}{C_+(t-s)} \\
    & \phantom{\geq V_{\infty}}\quad -\frac{\gamma^3 A_+}{(d+1)(t-s)(1+s)^{d+1}}-L^{-(d-1)}B_+.
  \end{aligned}
\end{align}
Considering separate cases of $s\leq t/2$ and $s\geq t/2$, we obtain
\begin{equation}\label{eq:appendix10}
  \langle V \rangle_{s,t}\geq V_{\infty}-C_*\frac{\gamma}{2t}
\end{equation}
for $t_1>t>s\geq \bar{t}/2$. Here we take $\bar{t}$ sufficiently large, $\gamma$ and $\bar{c}=\bar{c}(\beta_0,p_0,R,V_{\infty},d)$ sufficiently small. Inequalities~\eqref{eq:appendix9} and \eqref{eq:appendix10} imply
\begin{equation}
  \langle V \rangle_{s_0,t}\leq V_{\infty}-2C_*\frac{\gamma}{t}<V_{\infty}-C_*\frac{\gamma}{t}\leq \langle V \rangle_{s,t}
\end{equation}
for $t_1>t>s\geq \bar{t}/2$, which proves inequality~\eqref{eq:appendix8}. Now let $t_1>t\geq \bar{t}/2$ and $(x,\xi)\in I_{V}^{+}$. Suppose that
\begin{equation}
  \langle V \rangle_{0,t}<\xi_1<\langle V \rangle_{s_0,t},
\end{equation}
and let $\sigma_1$ be the largest $s\in (0,t)$ satisfying $\xi_1=\langle V \rangle_{s,t}$. We prove that $0<\sigma_1<s_0$. Since $V(t)$ is increasing on $[0,t_0]$ (Section~\ref{appendix2}), we have
\begin{equation}
  \xi_1=\langle V \rangle_{\sigma_1,t}<\langle V \rangle_{s_0,t}\leq \langle V \rangle_{s,t}
\end{equation}
for $s_0 \leq s\leq t_0$. This implies that $\sigma_1<s_0$ or $\sigma_1>t_0$. Suppose that $\sigma_1>t_0$. By definition~\eqref{t0} of $t_0$ and $L^{-(d-1)}\ll \gamma$, we have $\sigma_1>t_0 \geq \bar{t}/2$ for $\gamma$ sufficiently small. So we can apply inequality~\eqref{eq:appendix8} with $s=\sigma_1$: We have
\begin{equation}
  \langle V \rangle_{s_0,t}<\langle V \rangle_{\sigma_1,t}=\xi_1.
\end{equation}
This contradicts the assumption that $\xi_1<\langle V \rangle_{s_0,t}$. Therefore, we conclude that $\sigma_1<s_0$. If
\begin{equation}
  |x_{\perp}-(t-\sigma_1)\xi_{\perp}|<R,
\end{equation}
it is easy to see that $\tau_1=\sigma_1$. The reflected horizontal velocity $\xi_{1}'(\tau_1)$ at time $\tau_1$ is
\begin{equation}
  \xi_{1}'(\tau_1)=2V(\tau_1)-\xi_1.
\end{equation}
Since $\tau_1=\sigma_1<s_0$, we have
\begin{equation}
  2V(\tau_1)<V_0+\langle V \rangle_{\tau_1,t}=V_0+\xi_1
\end{equation}
by definition~\eqref{s0def} of $s_0$. These imply
\begin{equation}
  \xi_{1}'(\tau_1)<V_0.
\end{equation}
Since $V(t)>V_0$ for $t>0$, no further pre-collisions occur: $\tau_2=0$. This proves (v).

Now we give the lower bound~\eqref{r+lowerbound} of $r_{V}^{+}(t)$. First, note that by (iii) and (iv), we have
\begin{equation}
  \langle V \rangle_{0,t}<\langle V \rangle_{s_0,t}<V(t)
\end{equation}
for $t_1>t\geq \bar{t}/2$. So if we define $I(t)$ as
\begin{equation}
  I(t)=\int_{C_{V}^{+}(t)}\, dS\int \, d\xi_{\perp}\int_{\langle V \rangle_{0,t}}^{\langle V \rangle_{s_0,t}}(\xi_1-V(t))^2 \left( e^{-\beta_0 |\xi_0|^2}-e^{-\beta_0 |\xi|^2} \right) \, d\xi_1,
\end{equation}
we have $r_{V}^{+}(t)\geq \bar{C}I(t)$ for $t_1>t\geq \bar{t}/2$. See definition~\eqref{r+} of $r_{V}^{+}(t)$. Next, take $x\in C_{V}^{+}(t)$ satisfying $|x_{\perp}|<R/2$; take $\xi$ satisfying $\langle V \rangle_{0,t}<\xi_1<\langle V \rangle_{s_0,t}$ and $|\xi_{\perp}|>R/(2t)$; and let $\sigma_1$ be the largest $s\in (0,t)$ satisfying $\xi_1=\langle V \rangle_{s,t}$. These imply
\begin{equation}
  |x_{\perp}-(t-\sigma_1)\xi_{\perp}|\leq |x_{\perp}|+t|\xi_{\perp}|<R.
\end{equation}
Therefore, by (ii), (v), $V(t)>V_0$, and definition~\eqref{s0def} of $s_0$, we have
\begin{align}
  \begin{aligned}
    \xi_{1}^{2}-\xi_{01}^{2}
    & =\xi_{1}^{2}-(2V(\tau_1)-\xi_1)^2 \\
    & =4V(\tau_1)(\langle V \rangle_{\tau_1,t}-V(\tau_1)) \\
    & >2V(\tau_1)(\langle V \rangle_{\tau_1,t}-V_0) \\
    & >\frac{4}{3}V_0 \gamma
  \end{aligned}
\end{align}
for $t_1>t\geq \bar{t}/2$. So we have
\begin{align}
  \begin{aligned}
    I(t)
    & \geq C\gamma \int_{|\xi_{\perp}|<\frac{R}{2t}}e^{-\beta_0 |\xi_{\perp}^2|}\, d\xi_{\perp}\int_{\langle V \rangle_{0,t}}^{\langle V \rangle_{s_0,t}}(\xi_1-V(t))^2 \, d\xi_1 \\
    & \geq C\frac{\gamma}{t^{d-1}}\left[ (\langle V \rangle_{s_0,t}-V(t))^3 -(\langle V \rangle_{0,t}-V(t))^3 \right] \\
    & =C\frac{\gamma}{t^{d-1}}(\langle V \rangle_{s,t}-V(t))^2 (\langle V \rangle_{s_0,t}-\langle V \rangle_{0,t})
  \end{aligned}
\end{align}
for $t_1>t\geq \bar{t}/2$ and some $s\in (0,s_0)$. By (iii) and (iv), we have
\begin{equation}
  r_{V}^{+}(t)\geq \bar{C}I(t)\geq C\frac{\gamma^4}{t^{d+2}}
\end{equation}
for $t_1>t\geq \bar{t}/2$. This proves inequality~\eqref{r+lowerbound}.

Finally, by Proposition~\ref{r-nonnegative} and inequality~\eqref{r+lowerbound}, we have
\begin{align}
  \begin{aligned}
    V_{\infty}-V(t)
    & =\gamma e^{-\int_{0}^{t}K_V(\sigma)\, d\sigma}+\int_{0}^{t}e^{-\int_{s}^{t}K_V(\sigma)\, d\sigma}\left[ r_{V}^{+}(s)+r_{V}^{-}(s) \right] \, ds \\
    & \geq \gamma e^{-C_- t}+\int_{0}^{t}e^{-C_-(t-s)}r_{V}^{+}(s)\, ds \\
    & \geq \gamma e^{-C_- t}+C\gamma^4 \int_{\bar{t}/2}^{t}\frac{e^{-C_-(t-s)}}{s^{d+2}}\, ds 
  \end{aligned}
\end{align}
for $t_1>t\geq \bar{t}/2$. For $t_1>t>\bar{t}$, we have
\begin{equation}
  \int_{\bar{t}/2}^{t}\frac{e^{-C_-(t-s)}}{s^{d+2}}\, ds\geq \frac{1-e^{-C_- \bar{t}/2}}{C_- t^{d+2}}.
\end{equation}
Therefore, by choosing a positive constant $A_-=A_-(\beta_0,p_0,R,V_{\infty},d)$ appropriately, we have
\begin{equation}
  V_{\infty}-V(t)\geq \gamma e^{C_- t}+\gamma^4 \frac{A_-}{t^{d+2}}
\end{equation}
for $t_1>t>\bar{t}$. If $t>L$ in addition, a similar argument as in Section~\ref{sec:proof2} shows
\begin{equation}
  V_{\infty}-V(t)\geq \gamma e^{C_- t}+\gamma^4 \frac{A_-}{t^{d+2}}+\frac{B_-}{t^{d-1}}.
\end{equation}
This proves inequality~\eqref{estimate3_refined}.



\medskip
Received March 2017; revised xxxx 20xx.
\medskip


\begin{thebibliography}{99}
  \bibitem{ACMP1} (MR2405148) [10.1051/m2an:2008007]
    \newblock K. Aoki, G. Cavallaro, C. Marchioro and M. Pulvirenti,
    \newblock \emph{On the motion of a body in thermal equilibrium immersed in a perfect gas},
    \newblock M2AN Math. Model. Numer. Anal., \textbf{42} (2008), 263--275.

  \bibitem{BJJ1} (MR1863781)
    \newblock A. Belmonte, J. Jacobsen and A. Jayaraman,
    \newblock \emph{Monotone solutions of a nonautonomous differential equation for a sedimenting sphere},
  \newblock Electron. J. Differential Equations, \textbf{62} (2001), 1--17.
    
  \bibitem{BCM1} (MR3308141) [10.1007/978-3-319-14759-8]
    \newblock P. Butt\`{a}, G. Cavallaro and C. Marchioro,
    \newblock \emph{Mathematical Models of Viscous Friction},
    \newblock Lecture Notes in Mathematics, \textbf{2135}, Springer, Cham, 2015.

  \bibitem{CCM1} (MR2353147) [10.1142/S0218202507002315]
    \newblock S. Caprino, G. Cavallaro and C. Marchioro,
    \newblock \emph{On a microscopic model of viscous friction},
    \newblock Math. Models Methods Appl. Sci., \textbf{17} (2007), 1369--1403.

  \bibitem{CMP1} (MR2212220) [10.1007/s00220-006-1542-7]
    \newblock S. Caprino, C. Marchioro and M. Pulvirenti,
    \newblock \emph{Approach to equilibrium in a microscopic model of friction},
    \newblock Comm. Math. Phys., \textbf{264} (2006), 167--189.
    
  \bibitem{Cavallaro1} (MR2361025)
    \newblock G. Cavallaro,
    \newblock \emph{On the motion of a convex body interacting with a perfect gas in the mean-field approximation},
    \newblock Rend. Mat. Appl., \textbf{27} (2007), 123--145.

  \bibitem{CM1} (MR2740712) [10.1142/S0218202510004854]
    \newblock G. Cavallaro and C. Marchioro,
    \newblock \emph{On the approach to equilibrium for a pendulum immersed in a Stokes fluid},
    \newblock Math. Models Methods Appl. Sci., \textbf{20} (2010), 1999--2019.

  \bibitem{CMT1} (MR2855358) [10.1007/s11565-011-0127-3]
    \newblock G. Cavallaro, C. Marchioro and T. Tsuji,
    \newblock \emph{Approach to equilibrium of a rotating sphere in a Stokes flow},
    \newblock Ann. Univ. Ferrara Sez. VII Sci. Mat., \textbf{57} (2011), 211--228.
    
  \bibitem{CS1} (MR3158809) [10.1007/s00205-013-0675-z]
    \newblock X. Chen and W. Strauss,
    \newblock \emph{Approach to equilibrium of a body colliding specularly and diffusely with a sea of particles},
    \newblock Arch. Ration. Mech. Anal., \textbf{211} (2014), 879--910.
    
  \bibitem{CS2} (MR3359707) [10.1007/s00220-015-2368-y]
    \newblock X. Chen and W. Strauss,
    \newblock \emph{Velocity reversal criterion of a body immersed in a sea of particles},
    \newblock Comm. Math. Phys., \textbf{338} (2015), 139--168.

  \bibitem{FSS1} (MR3471075) [10.1063/1.4943013]
    \newblock C. Fanelli, F. Sisti and G. V. Stagno,
    \newblock \emph{Time dependent friction in a free gas},
    \newblock J. Math. Phys., \textbf{57} (2016), 1--12.

  \bibitem{RS1} (MR3274363) [10.1137/140954003]
    \newblock C. Ricciuti and F. Sisti,
    \newblock \emph{Effects of concavity on the motion of a body immersed in a Vlasov gas},
    \newblock SIAM J. Math. Anal., \textbf{46} (2014), 3579--3611.

\end{thebibliography}
\end{document}